\def\cl{\centerline}
\def\vs{\vspace*}
\def\R{\mathfrak{g}}
\def\Z{\mathbb{Z}}
\def\H{\mathfrak{hv}}
\def\C{\mathbb{C}}
\def\ni{\noindent}
\numberwithin{equation}{section}
\newtheorem{theo}{Theorem}[section]
\newtheorem{defi}[theo]{Definition}
\newtheorem{coro}[theo]{Corollary}
\newtheorem{lemm}[theo]{Lemma}
\newtheorem{prop}[theo]{Proposition}
\newtheorem{clai}{Claim}
\newtheorem{case}{Case}
\newtheorem{rema}[theo]{Remark}
\begin{document}
\begin{center}
\cl{\large\bf \vs{8pt}  Representations of the N=1 Heisenberg-Virasoro superalgebra}
\cl{Ziqi Hong, Haibo Chen$^*$ and Yucai Su}
\end{center}
\footnote{$^{*}$ Corresponding author (H. Chen).}
{\small
\parskip .005 truein
\baselineskip 3pt \lineskip 3pt

\noindent{{\bf Abstract:}
We first  define a class  of  non-weight  modules over   the N=1 Heisenberg-Virasoro superalgebra $\mathfrak{g}$, which are reducible modules.
 Then we give all submodules of such modules, and present the corresponding
irreducible quotient modules which were exactly studied in \cite{DL}. Also, we prove that those modules constitute a
complete classification of     $U(\mathfrak{h})$-free modules of rank $2$ over $\mathfrak{g}$, where $\mathfrak{h}=\C L_{0}\oplus\C H_{0}$ is a degree-0 subalgebra  of  $\mathfrak{g}$. As an application,  we   obtain a class of weight $\mathfrak{g}$-modules      from those non-weight $\mathfrak{g}$-modules by the  weighting functor. Furthermore,  we study the  non-weight modules  over the four  subalgebras of $\mathfrak{g}$: (i) the Heisenberg-Virasoro  algebra; (ii) the Neveu-Schwarz algebra; (iii) the Fermion-Virasoro algebra; (iv) the Heisenberg-Clifford superalgebra. As far as we know, those  non-weight Heisenberg-Virasoro  modules were
 constructed      in  \cite{HCS}, but the structure of submodules was not   clear. In this paper,  we  determine  all submodules  of them, and  we   show  the corresponding
irreducible quotient modules which were exactly defined in \cite{CG}.  
\vs{5pt}

\ni{\bf Keywords:}
N=1 Heisenberg-Virasoro superalgebra, non-weight module, irreducible quotient    module.}

\ni{\it Mathematics Subject Classification (2020):} 17B10, 17B65, 17B68.}
\parskip .001 truein\baselineskip 6pt \lineskip 6pt

\tableofcontents
\section{Introduction}

Throughout this paper,  we use  $\C, \C^*$ and $\Z$    to denote the sets of complex numbers, nonzero complex numbers and integers, respectively.
   All vector superspaces
(resp. superalgebras, supermodules)   and
 spaces (resp. algebras, modules)    are considered to be over
$\C$, and all modules for Lie superalgebras considered  are $\Z_2$-graded.  For a Lie (super)algebra $\mathfrak{L}$, we denote by $U(\mathfrak{L})$
the universal enveloping algebra.

Investigating non-weight modules is a fundamental task in representation theory of Lie (super)algebras. With the   development of Lie (super)algebras,
a family  of  important  non-weight modules   called    $U({\mathfrak{h}})$-free modules  were introduced and studied. They are closely related to the intermediate  series   modules and  algebraic D-modules. 
Note that ${\mathfrak{h}}$ can be considered as   various degree-$0$ subalgebras, especially as the Cartan  subalgebra.  
Those modules were first defined over the complex matrix algebra $\mathfrak{sl}_{n+1}$   by Nilsson  in \cite{8}. At the same time,
Tan and Zhao obtained  them  by a very different method in   the reference  \cite{9}.  From then on, the  $U({\mathfrak{h}})$-free modules  were widely investigated over all kinds of  Lie (super)algebras
 (see, e.g., \cite{N2,LZ,XZ,TZ,CG,HCS,YYX1,15,CDL1,CDL,YYX3,DL}).

The N=1 Heisenberg-Virasoro superalgebra (also called a kind of superconformal current
algebra) arises in the context of mathematical
physics and theoretical physics (see \cite{2}), which can be seen as   a supersymmetric extension of the 
Heisenberg–Virasoro algebra. It can be realized from Balinsky–Novikov
superalgebras, which construct local translation invariant Lie superalgebras of vector valued functions on the line (see
\cite{PB}). It   can also be realized in \cite{GMP} by    studying    the first cohomology groups of centerless
N=1 super Virasoro algebras with coefficients in their tensor density modules. 
 Recently, the representation theory of the N=1 Heisenberg-Virasoro superalgebra has been widely studied, such as Verma modules in \cite{4,5}, cuspidal modules in \cite{LL,6},  smooth modules in  \cite{7} and $U(\mathbb{C}L_0)$-free modules 
 of rank 2  in  \cite{DL}, and so on. 
The aim of this paper is to construct  and study a class  of non-weight   modules over the N=1 Heisenberg-Virasoro superalgebra, which are free of rank 2 when regarded as modules over $\mathfrak{h}=\mathbb{C}L_0\oplus\mathbb{C}H_0$. Here,  $\mathfrak{h}$ is a degree-0 subalgebra, but it is not the Cartan subalgebra.  Based on  the idea in \cite{YYX3},  we precisely determine all submodules for those reducible non-weight modules.
 In particular, we provide a simple calculation method by using  Proposition \ref{pro2.21} and some  linear operators.
Applying the   results of $\mathfrak{g}$,   the non-weight  modules over   some subalgebras  are defined and studied.
This is also our motivation for giving this work. The list of  the main statements  of this paper is as follows.
 $$\begin{tabular}{l*{3}c}
    \hline
      Lie      &  Determine      &  Irreducible    & Main 
     \\            
          (super)algebra           & all submodules & quotient module  &   conclusions   \\\hline
    $\mathfrak{g}$      &  Yes         & $\Phi(\lambda, \beta,  b)$    &  Theorems \ref{th44555}, \ref{le4.366}, \ref{th664}  \\\hline
           $\mathfrak{hv}$             & Yes    & $\Phi_{\mathfrak{hv}}(\lambda, \beta, b)$  & Propositions \ref{le5.1}, \ref{lemm5.2} \\            
           $\mathfrak{ns}$           & Yes  & $\Phi_\mathfrak{ns} (\lambda,   b)$  & Propositions \ref{lem5.5}, \ref{lemma5.666}  \\
           $\mathfrak{fv}$                & No        &  ---     & Proposition \ref{lemm5.9999}
           \\
          $\mathfrak{hc}$              & No       & ---     & Proposition \ref{lemma51111} 
            \\
         \hline
  \end{tabular}$$

The rest of this paper is organized as follows.
In Section $2$, we first  recall  some notations,    basic   definitions and  known results.
After that, we  introduce a class of non-weight   $\mathfrak{g}$-modules  $\Omega(\lambda,
\beta)$ in Proposition \ref{pro2.21}.
In Section  $3$, we give a complete classification for       $U(\mathfrak{h})$-free modules of rank $2$  over  $\mathfrak{g}$  in Theorem \ref{th44555}, where $\mathfrak{h}$ contains the central element $H_0$.
In Section  $4$, we determine all submodules of   those reducible $\mathfrak{g}$-module $\Omega(\lambda,
\beta)$  in Theorem \ref{le4.366} by    Proposition \ref{pro2.21} and some useful linear operators.  
In Section 5, we  give all submodules of those   non-weight modules over the Heisenberg-Virasoro  algebra and  the Neveu-Schwarz algebra in Proposition \ref{le5.1}  and Proposition \ref{lem5.5}, respectively.
As far as we know,  those  Heisenberg-Virasoro modules have been defined in \cite{HCS}, but the authors only gave a   filtration of  submodules of them.
 Finally, we present a class of weight $\mathfrak{g}$-modules   by applying   the weighting functor to $\mathfrak{g}$-modules $\Omega(\lambda,
\beta)$ in Theorem \ref{th664}.
 
\section{A family of non-weight modules over $\mathfrak{g}$}
In this section,    we   construct a class of  non-weight  modules (also called  polynomial  modules) over the N=1 Heisenberg-Virasoro superalgebra. Let $\mathfrak{h}=\C L_{0}\oplus \C H_{0}$ be a degree-0 subalgebra. 
Those modules  are free of rank $2$ when regarded as $U(\mathfrak{h})$-modules.

\subsection{N=1 Heisenberg-Virasoro  superalgebra}
 Now we  recall the definition of the N=1 Heisenberg-Virasoro  superalgebra, which was studied in  
  \cite{2}. It was also independently introduced as a supersymmetric extension of
 the Beltrami algebra in  \cite{GSWX}.
\begin{defi}
   The N=1 Heisenberg-Virasoro  superalgebra  $\mathfrak{g}  = \text{span}\big\{L_m, H_m, G_p, Q_p,C_i\mid m \in \mathbb{Z},p\in\mathbb{Z}+\frac{1}{2},i=1,2,3\big\}$  is an  infinite-dimensional  Lie superalgebra, and
   satisfies  the following Lie super-brackets:
 \begin{equation}\label{2.1}
\aligned
&[L_m, L_n] = (m - n) L_{m+n}+\delta_{m+n,0} \frac{m^3 - m}{12} C_1, 
\\& [L_m, H_n] = -n H_{m+n}+\delta_{m+n,0} (m^2 + m)C_2, 
\\&[H_m, H_n] = m\delta_{m+n,0}C_3,
\  [L_m, G_p] = \left( \frac{m}{2} - p \right) G_{m+p},
\\&  [L_m, Q_p] = - \left( \frac{m}{2} + p \right) Q_{m+p},  
\  [H_m, G_p] = m Q_{m+p}, 
\\&
 [G_p, G_q] = 2 L_{p+q} + \delta_{p+q,0} \frac{4p^2 -1}{12} C_1,\\&
 [G_p, Q_q]= H_{p+q}+ (2p+1) \delta_{p+q,0} C_2,
 \\&  
 [Q_p, Q_q] =\delta_{p+q,0} C_3,
 \ 
 [H_m, Q_p] =[\mathfrak{g}, C_i] = 0,
\endaligned
\end{equation}
where $m, n \in \mathbb{Z}$, $p,q \in \mathbb{Z}+\frac{1}{2},i=1,2,3$.
\end{defi}
It is worth noting that $\mathfrak{g}$ contains four important subalgebras:
\begin{itemize}
\item[\rm(i)]  the   
Heisenberg-Virasoro algebra $\mathfrak{hv}=\mathrm{span}\{L_m,H_m,C_1,C_2,C_3\mid m\in\mathbb{Z}\}$;
\item[\rm(ii)] the   Neveu-Schwarz algebra $\mathfrak{ns}=\mathrm{span}\{L_m,G_p,C_1\mid m\in\mathbb{Z},p\in \mathbb{Z}+\frac{1}{2}\}$;
\item[\rm(iii)]  the  Fermion-Virasoro algebra $\mathfrak{fv}=\mathrm{span}\{L_m,Q_p,C_1,C_3\mid m\in\mathbb{Z},p\in \mathbb{Z}+\frac{1}{2}\}$;
\item[\rm(iv)]
the   Heisenberg-Clifford superalgebra $\mathfrak{hc}=\mathrm{span}\{H_m,Q_p,C_3\mid m\in\mathbb{Z},p\in \mathbb{Z}+\frac{1}{2}\}$.
\end{itemize}
The $\mathbb{Z}_2$-graded Lie superalgebra $\mathfrak{g}=\mathfrak{g}_{\bar0}\oplus\mathfrak{g}_{\bar1}$, where $\mathfrak{g}_{\bar0}=\mathfrak{hv}$ and $\mathfrak{g}_{\bar1}=\mathrm{span}\{G_p,Q_p\mid p\in\mathbb{Z}+\frac{1}{2}\}$.  
Clearly, $\mathfrak{c} =\mathbb{C}H_0\oplus\mathbb{C}C_1\oplus\mathbb{C}C_2\oplus\mathbb{C}C_3$ is the center of $\mathfrak{g}$.
We denote by
$A=\mathbb{C}\left[t^{\frac{1}{2}}, t^{-\frac{1}{2}}, \xi\right]
$
the associative superalgebra of Laurent polynomials associated to an even, $t$, and an odd, $\xi$, formal variable.   
 Then  $\mathfrak{g}=\mathfrak{ns}\ltimes A$ can   be written as the super  derivation algebra of the form as
$$ L_m=-t^{m}(t\frac{d}{dt}+\frac{m}{2}\xi\frac{d}{d\xi}), G_p=t^{p}(\xi  \frac{d}{dt}- \frac{d}{d\xi}),H_m=t^m,Q_p=t^p\xi
$$
 for $m\in\mathbb{Z}$, $p\in\mathbb{Z}+\frac{1}{2}$.
\subsubsection{The known results}
In   the present paper, we always set $\lambda \in \mathbb{C}^*, \beta(y)=\sum_{i=0}^k\beta_iy^i \in \mathbb{C}[y]$.    Now we recall a family of non-weight modules over $\mathfrak{hv}$ constructed in \cite{HCS}, which are  closely  associated with the classification of $U(\mathfrak{h})$-free modules of rank 1.
    As a vector space, $\Psi_{\mathfrak{hv}}
(\lambda, \beta)= \mathbb{C}[x,y]$ is the polynomial algebra, and the $\mathfrak{hv}$-action is given by
\begin{eqnarray}\label{eq2.22}
L_m f(x,y) = \lambda^m (x+ m\beta(y)) f(x+m,y), \  H_m f(x,y) = \lambda^m yf(x+m,y),\ C_if(x,y)=0,\end{eqnarray}
where $f(x,y) \in \mathbb{C}[x,y], m \in \mathbb{Z},i=1,2,3$. 
The following results are given in \cite{HCS}.   
\begin{theo}\label{tho2.1}  Let $\lambda,\lambda^\prime\in\mathbb{C}^*$ and $\beta(y),\beta^\prime(y)\in\mathbb{C}[y]$.
\begin{itemize}
\item[\rm(1)] Then $\Psi_{\mathfrak{hv}}
(\lambda, \beta)$ is an $\mathfrak{hv}$-module;
\item[\rm(2)]
Then $\Psi_{\mathfrak{hv}}
(\lambda, \beta)\cong\Psi_{\mathfrak{hv}}
(\lambda^\prime, \beta^\prime)$ if and only if $\lambda=\lambda^\prime,\beta(y)=\beta^\prime(y)$;
\item[\rm(3)]
  Assume that there exists an $\mathfrak{hv}$-module $M^\prime$ such that it is a  
$U(\mathfrak{h})$-free module of rank 1. Then    $M^\prime\cong \Psi_{\mathfrak{hv}}
(\lambda, \beta)$.
\end{itemize}
\end{theo}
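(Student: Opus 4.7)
Parts~(1) and~(2) reduce to elementary computations. For~(1) I would substitute the proposed actions into a test polynomial $f(x,y)$ and verify the three nontrivial brackets $[L_m,L_n]=(m-n)L_{m+n}$, $[L_m,H_n]=-nH_{m+n}$, and $[H_m,H_n]=0$; the only identity requiring a moment's thought is the Virasoro cancellation
\[
(x+m\beta(y))(x+m+n\beta(y))-(x+n\beta(y))(x+n+m\beta(y))=(m-n)\bigl(x+(m+n)\beta(y)\bigr),
\]
while the mixed and abelian brackets collapse immediately. For~(2), note that the constant polynomial $1\in\mathbb{C}[x,y]$ is a free $U(\mathfrak{h})$-generator of $\Psi_{\mathfrak{hv}}(\lambda,\beta)$ since $L_0$ and $H_0$ act by multiplication by $x$ and $y$. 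A $U(\mathfrak{h})$-module isomorphism $\phi\colon\Psi_{\mathfrak{hv}}(\lambda,\beta)\to\Psi_{\mathfrak{hv}}(\lambda',\beta')$ must carry a free generator to a free generator, and the free generators of $\mathbb{C}[x,y]$ (as a module over itself) are exactly the nonzero scalars; so $\phi$ is multiplication by some $c\in\mathbb{C}^*$. Demanding $\phi(L_m\cdot 1)=L_m\phi(1)$ forces $\lambda^m(x+m\beta(y))=(\lambda')^m(x+m\beta'(y))$ for every $m\in\mathbb{Z}$, and setting $m=1$ and comparing $x$-coefficients recovers first $\lambda=\lambda'$ and then $\beta=\beta'$.

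The substantive content is~(3). Let $M=U(\mathfrak{h})v$ be a $U(\mathfrak{h})$-free rank-$1$ $\mathfrak{hv}$-module, identified with $\mathbb{C}[x,y]v$ via $L_0\leftrightarrow x$, $H_0\leftrightarrow y$, and define $F_m(x,y),G_m(x,y)\in\mathbb{C}[x,y]$ by $L_mv=F_m(x,y)v$ and $H_mv=G_m(x,y)v$, with $F_0=x$ and $G_0=y$. The commutators $[L_m,L_0]=mL_m$, $[L_m,H_0]=0$, $[H_m,L_0]=mH_m$, $[H_m,H_0]=0$ propagate the actions via the shift rules $L_m\cdot p(x,y)v=p(x+m,y)F_m(x,y)v$ and $H_m\cdot p(x,y)v=p(x+m,y)G_m(x,y)v$ for every $p\in\mathbb{C}[x,y]$. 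Substituting these into the remaining three brackets encodes the module structure in the system
\begin{align*}
F_n(x+m,y)F_m(x,y)-F_m(x+n,y)F_n(x,y)&=(m-n)F_{m+n}(x,y),\\
G_n(x+m,y)F_m(x,y)-F_m(x+n,y)G_n(x,y)&=-nG_{m+n}(x,y),\\
G_n(x+m,y)G_m(x,y)&=G_m(x+n,y)G_n(x,y).
\end{align*}

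The main obstacle is to extract from this system the explicit shapes $F_m=\lambda^m(x+m\beta(y))$ and $G_m=\lambda^m y$. I would first attack the Virasoro equation alone, treating $y$ as a silent parameter in the spirit of Tan-Zhao~\cite{9}: a degree-in-$x$ count (specializing to $n=\pm 1$ and inducting) shows each $F_m$ is linear in $x$, and writing $F_m(x,y)=\alpha_m(y)x+\gamma_m(y)$ reduces the equation to the multiplicative relation $\alpha_m\alpha_n=\alpha_{m+n}$ (forcing $\alpha_m=\lambda^m$ for some $\lambda\in\mathbb{C}^*$, the nonvanishing following from $[L_1,L_{-1}]=2L_0$ combined with $F_0=x$) together with the scalar recurrence $m\lambda^n b_m-n\lambda^m b_n=(m-n)\lambda^{m+n}b_{m+n}$ on $b_m(y):=\gamma_m(y)/\lambda^m$, whose only solution with $b_0=0$ (using in particular $b_{-m}=-b_m$) is $b_m=m\beta(y)$ with $\beta(y):=b_1(y)\in\mathbb{C}[y]$. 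Inserting this $F_m$ into the mixed equation with $n=-m$ and comparing the leading $x$-coefficient forces $\deg_xG_n=0$ for all $n\ne 0$, so $G_n=g_n(y)$; the equation then collapses to the scalar relation $g_{m+n}=\lambda^m g_n$ ($n\ne 0$), which with $g_0=y$ immediately yields $g_m=\lambda^m y$. With $F_m$ and $G_m$ pinned down, the map $p(x,y)v\mapsto p(x,y)$ is the desired isomorphism $M\cong\Psi_{\mathfrak{hv}}(\lambda,\beta)$. The principal technical load is solving the Virasoro functional equation; once that is done, the mixed and abelian relations mop up the $G_m$ quickly.
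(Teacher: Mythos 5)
The paper does not actually prove this statement: Theorem \ref{tho2.1} is imported verbatim from the reference [HCS] (Han--Chen--Su, \emph{Modules over the algebras $Vir(a,b)$}), and the present authors only cite it. So there is no in-paper proof to compare against; your proposal is a from-scratch reconstruction, and it follows the standard Tan--Zhao/L\"u--Zhao strategy that [HCS] itself uses. Your verifications for (1) and (2) are correct, your three functional equations for (3) are the right ones, and the extraction of $\alpha_m=\lambda^m$, $b_m=m\beta(y)$, and $G_m=\lambda^m y$ all check out (modulo a harmless typo: after substituting $\gamma_m=\lambda^m b_m$ the recurrence is $mb_m-nb_n=(m-n)b_{m+n}$, with the powers of $\lambda$ cancelling entirely).

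The one place where your outline is thinner than the argument actually requires is the claim that each $F_m$ is linear in $x$. The leading-coefficient cancellation gives $d_{m+n}\le d_m+d_n-1$ for $m\ne n$ (where $d_m=\deg_x F_m$), and the sub-leading coefficient computation shows this is an equality precisely when $md_n\ne nd_m$. Applying this to the pair $(m,-m)$ with $d_0=1$ yields $d_m+d_{-m}=2$, which leaves open the possibility $\{d_m,d_{-m}\}=\{0,2\}$ in addition to $d_m=d_{-m}=1$; one must then rule out $d_m=0$ separately, e.g.\ by feeding the pair $(2m,-m)$ back into the same inequality and deriving a negative degree. Your parenthetical ``specializing to $n=\pm1$ and inducting'' does not quite name this case split, so if you write the argument out in full you should include it. Since this is exactly the technical heart you yourself flag, and since the statement being proved is a quoted literature result rather than one the paper establishes, I would call your proposal a correct outline rather than a gap, but the linearity step is the one point that cannot be left as an assertion.
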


Assume that $\mathfrak{L}$ is a Lie superalgebra. Let $P=P_{\bar0}\oplus P_{\bar1}$ be a  $\Z_2$-graded vector space.  Any element $v\in P_{\bar0}$
is said to be even, and any element $v\in P_{\bar1}$
is said to be odd.   If
$v$ is even, we set $|v|=\bar 0$, and if
$v$ is  odd, we  set $|v|=\bar 1$.  Any elements in  $P_{\bar0}$ or  $P_{\bar1}$ are called homogeneous, and  all  elements in superalgebras and modules are homogeneous unless
specified.
An $\mathfrak{L}$-module is a $\Z_2$-graded vector space $V$ together
with a bilinear map $\mathfrak{L}\times P\rightarrow P$, denoted $(a,v)\mapsto av$ such that
$$a(bv)-(-1)^{|a||b|}b(av)=[a,b]v
\quad \mathrm{and}\quad
 \mathfrak{L}_{\bar i} P_{\bar j}\subseteq  P_{\bar i+\bar j},$$
where $\bar i,\bar j\in\Z_2,a, b \in  \mathfrak{L}, v \in P$. Therefore, there is a parity-change functor $\Pi$ on the category of
$\mathfrak{L}$-modules to itself, that is to say, for any module
$P=P_{\bar0}\oplus P_{\bar1}$, we obtain $\Pi(P_{\bar 0})=P_{\bar 1}$ and $\Pi(P_{\bar 1})=P_{\bar0 }$.

\subsection{Non-weight $\mathfrak{g}$-modules}
Now  we  give a  precise construction of a  $\mathfrak{g}$-module structure
on $M=\mathbb{C}[x,y] \oplus\mathbb{C}[s,t]$ as follows.
\begin{prop}\label{pro2.21}
For $\lambda\in \C^*,\beta(y)\in\C[y],\beta(t)\in\mathbb{C}[t],f(x,y)\in\C[x,y],f(s,t)\in\C[s,t]$,
 the   $\mathfrak{g}$-action
  on $M$ is defined as follows
\begin{eqnarray}
 &&\label{2.3}  
 L_m f(x,y) = \lambda^m (x+m\beta(y)) f(x+m,y),  
 \\&&\label{2.6}L_m f(s,t)= \lambda^m (s+m(\beta(t)+\frac{1}{2})) f(s+m,t), 
 \\&& \label{2.4} H_m f(x,y)= \lambda^m yf(x+m,y),
 \\&&\label{2.7}H_m f(s,t)= \lambda^m tf(s+m,t),
 \\&& \label{2.5}G_p f(x,y)= \lambda^{p-\frac{1}{2}} f(s+p,t),
 \\&& \label{2.8}G_p f(s,t)= \lambda^{p+\frac{1}{2}} (x + 2p\beta(y)) f(x+p,y),
 \\&&\label{2.9}  Q_p f(s,t)= \lambda^{p+\frac{1}{2}} yf(x+p,y),
 \\&& \label{qwe2.10}    Q_p f(x,y)=C_if(x,y) =C_if(s,t)=0 
 \end{eqnarray}
for $m\in\Z,p\in\mathbb{Z}+\frac{1}{2},i=1,2,3.$
 Then $M$ is a   $\mathfrak{g}$-module under the actions  of \eqref{2.3}-\eqref{qwe2.10}, which
  is  
free of rank $2$ as a $U(\mathfrak{h})$-module  and  is denoted by $\Omega(\lambda,
\beta)$.
\end{prop}
 \begin{proof} Since 
 $C_1,C_2$ and $C_3$
  ‌act‌ as zero on $M$, 
   we can ignore these central elements in this proof.
For any $m\in\mathbb{Z},p\in\mathbb{Z}+\frac{1}{2}$, according to \eqref{2.3}, \eqref{2.6}, \eqref{2.5} and \eqref{2.8}, we compute that 

 \begin{eqnarray*}
    &&[L_m, G_p] f(x,y) 
    \\
    &=& \lambda^{p-\frac{1}{2}} L_mf(s+p,t) - \lambda^m G_p(x+m\beta(y)) f(x+m,y) \\
    &=& \lambda^{m+p-\frac{1}{2}}   (s+m(\beta(t)+\frac{1}{2})) f(s+m+p,t) - \lambda^{m+p-\frac{1}{2}} (s+p+m\beta(t))   f(s+m+p,t)\\
    &=&(\frac{m}{2}-p)\lambda^{m+p-\frac{1}{2}}f(s+p+m,t)
  =( \frac{m}{2} - p ) G_{m+p}f(x,y)
    \end{eqnarray*}
    and
    \begin{eqnarray*}
    &&[L_m, G_p] f(s,t) 
    \\&=& \lambda^{p+\frac{1}{2}} L_m(x + 2p\beta(y)) f(x+p,y) -\lambda^m G_p(s+m(\beta(t)+\frac{1}{2})) f(s+m,t)  \\
    &=& \lambda^{m+p+\frac{1}{2}} (x+m\beta(y))  (x+m + 2p\beta(y)) f(x+m+p,y) \\
    &&-\lambda^{m+p+\frac{1}{2}} (x + 2p\beta(y)) (x+p+m(\beta(y)+\frac{1}{2})) f(x+m+p,y)  \\
    &=& \lambda^{m+p+\frac{1}{2}}  (\frac{m}{2}-p)(x+2(m+p)\beta(y)) f(x+m+p,y) \\
    &=&  \big( \frac{m}{2} - p \big) G_{m+p}f(s,t). 
    \end{eqnarray*}
 For any $m\in\mathbb{Z},p\in\mathbb{Z}+\frac{1}{2}$, it follows from \eqref{2.3},  \eqref{2.6}, \eqref{2.9} and \eqref{qwe2.10}  that we have 
    \begin{eqnarray*}
    &&[L_m, Q_p] f(x,y) = -\big( \frac{m}{2} + p \big) Q_{m+p}f(x,y)=0
    \end{eqnarray*}
    and
    \begin{eqnarray*}
    && [L_m, Q_p] f(s,t) 
    \\&=& \lambda^{p+\frac{1}{2}}  L_myf(x+p,y) -\lambda^m Q_p(s+m(\beta(t)+\frac{1}{2})) f(s+m,t)  
    \\
    &=&\lambda^{m+p+\frac{1}{2}}   (x+m\beta(y))yf(x+p+m,y)   -\lambda^{m+p+\frac{1}{2}} (x+p+m(\beta(y)+\frac{1}{2}))yf(x+p+m,y)  \\
    &=& -\lambda^{m+p+\frac{1}{2}}  (\frac{m}{2}+p) yf(x+p+m,y)= - \big( \frac{m}{2} + p \big) Q_{m+p}f(s,t). 
    \end{eqnarray*}
 For any $m\in\mathbb{Z},p\in\mathbb{Z}+\frac{1}{2}$, by \eqref{2.4}, \eqref{2.7}, \eqref{2.5} and \eqref{2.8}, we check
    \begin{eqnarray*}
    && [H_m, G_p] f(x,y)  =m Q_{m+p}f(x,y)=0
    \end{eqnarray*}
    and
    \begin{eqnarray*}
    && [H_m, G_p] f(s,t)
    \\
    &=&\lambda^{m+p+\frac{1}{2}}  (x+m+2p\beta{(y)})yf(x+m+p,y) - \lambda^{m+p+\frac{1}{2}}  (x+2p\beta{(y)})yf(x+m+p,y)\\
    &=&m Q_{m+p}f(s,t).
    \end{eqnarray*}  
 For any $p,q\in\mathbb{Z}+\frac{1}{2}$,   based on  \eqref{2.3}, \eqref{2.6},  \eqref{2.5} and \eqref{2.8}, we confirm that 
\begin{eqnarray*}
&&[G_p, G_q] f(x, y)  \\
&=& \lambda^{p+q}(x + 2p\beta(y)) f(x+p+q,y) + \lambda^{p+q}(x + 2q\beta(y)) f(x+p+q,y) \\
&=& 2\lambda^{p+q}(x + (p+q)\beta(y)) f(x+p+q,y)= 2 L_{p+q}f(x, y)
\end{eqnarray*}
and
\begin{eqnarray*}
&&[G_p, G_q] f(s, t)  
\\
&=& \lambda^{p+q} (s+p + 2q\beta(t)) f(s+p+q,t)+\lambda^{p+q} (s+q + 2p\beta(t)) f(s+p+q,t)\\
&=&2\lambda^{p+q} (s+(p+q)(\beta(t)+\frac{1}{2})) f(s+p+q,t)= 2 L_{p+q}f(s, t).
\end{eqnarray*}
For any $p,q\in\mathbb{Z}+\frac{1}{2}$, from \eqref{2.4}-\eqref{qwe2.10}, we have
    \begin{eqnarray*}
   && [G_p, Q_q] f(x,y) 
    =\lambda^{p+q} yf(x+p+q,y)
    =H_{p+q}f(x,y),
    \\&&
 [G_p, Q_q ]f(s,t) = \lambda^{p+q}tf(s+p+q,t)
=H_{p+q}f(s,t).
    \end{eqnarray*}  
 For any $m\in\mathbb{Z},p,q\in\mathbb{Z}+\frac{1}{2}$,
by using  \eqref{2.4},  \eqref{2.7},     \eqref{2.9} and \eqref{qwe2.10}, it is easy to get 
\begin{eqnarray*}
    [H_m, Q_p] f(x,y)=
    [H_m, Q_p] f(s,t)=[Q_p, Q_q] f(x,y) =[Q_p, Q_q] f(s,t) =0.
\end{eqnarray*}
Moreover, all the rest of the relations can be directly obtained by Theorem \ref{tho2.1}. This completes the proof. 
 \end{proof}
 \begin{rema} We note that $\mathfrak{g}$-module $\Omega(\lambda,\beta)$  is not irreducible,  and  
we remark   $\Omega(\lambda,
\beta)=(\Omega(\lambda,
\beta))_{\bar0}\oplus(\Omega(\lambda,
\beta))_{\bar1}$,
where $(\Omega(\lambda,
\beta))_{\bar0}=\mathbb{C}[x,y]$ and $(\Omega(\lambda,
\beta))_{\bar1}=\mathbb{C}[s,t]$.
 \end{rema}

\section{ Classification of  $U(\mathfrak{h})$-free modules of rank $2$}
In this section, we determine a complete classification of $U(\mathfrak{h})$-free modules of rank $2$ over the N=1 Heisenberg-Virasoro superalgebra.
The following result appeared in \cite{YYX3}.
\begin{lemm}\label{le311}  
  Let $\mathfrak{l}=\mathfrak{l}_{\bar0}\oplus\mathfrak{l}_{\bar1}$ be a Lie superalgebra. Let $\mathfrak{h}$ be a degree-0 subalgebra of $\mathfrak{l}$ with
$\mathfrak{h}\subseteq \mathfrak{l}_{\bar0}$, and $[\mathfrak{l}_{\bar1},\mathfrak{l}_{\bar1}]=\mathfrak{l}_{\bar0}.$ Then there do not exist $\mathfrak{l}$-modules which are free of rank 1 as
$U(\mathfrak{h})$-modules.  
\end{lemm}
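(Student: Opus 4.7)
The plan is to argue by contradiction: suppose there exists an $\mathfrak{l}$-module $M$ which is free of rank $1$ as a $U(\mathfrak{h})$-module, say $M = U(\mathfrak{h})v$ for a generator $v$. Since $\mathfrak{h}\subseteq \mathfrak{l}_{\bar0}$, every element of $U(\mathfrak{h})$ acts with even parity, so both $M_{\bar0}$ and $M_{\bar1}$ are $U(\mathfrak{h})$-submodules of $M$, and the grading decomposition $M = M_{\bar0}\oplus M_{\bar1}$ is simultaneously a decomposition of $U(\mathfrak{h})$-modules.

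Since $\mathfrak{h}$ is an ordinary Lie algebra, $U(\mathfrak{h})$ is a domain and hence indecomposable as a left module over itself (its only idempotents are $0$ and $1$). The free rank-$1$ module $M \cong U(\mathfrak{h})$ is therefore indecomposable as a $U(\mathfrak{h})$-module, so one of $M_{\bar0}$, $M_{\bar1}$ must vanish. After applying the parity-change functor $\Pi$ if necessary, I may assume $M = M_{\bar0}$, so that $v \in M_{\bar0}$ and $M_{\bar1} = 0$.

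Now for any $x \in \mathfrak{l}_{\bar1}$, the element $xv$ lies in $M_{\bar1} = 0$, so $xv = 0$. Consequently, for any $x, y \in \mathfrak{l}_{\bar1}$, the superbracket action gives $[x,y]v = x(yv) + y(xv) = 0$, and hence $[\mathfrak{l}_{\bar1}, \mathfrak{l}_{\bar1}]\,v = 0$. Using the hypothesis $[\mathfrak{l}_{\bar1}, \mathfrak{l}_{\bar1}] = \mathfrak{l}_{\bar0}$, this yields $\mathfrak{l}_{\bar0}\,v = 0$, and in particular $hv = 0$ for every $h \in \mathfrak{h}$. But freeness of rank $1$ means the map $u \mapsto uv$ is an injection $U(\mathfrak{h}) \hookrightarrow M$, so $hv = 0$ forces $h = 0$ in $U(\mathfrak{h})$. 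Assuming $\mathfrak{h}\neq 0$, this is the desired contradiction.

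The main content lies in the gradedness observation of the second paragraph: namely, that a rank-$1$ $U(\mathfrak{h})$-free module must live entirely in one $\mathbb{Z}_2$-degree, which follows from $U(\mathfrak{h})$ being indecomposable over itself. Once this is secured, the rest of the argument is essentially automatic from the identity $[\mathfrak{l}_{\bar1}, \mathfrak{l}_{\bar1}] = \mathfrak{l}_{\bar0}$, and no substantial calculation is required. The only technical point worth isolating is the domain/indecomposability step; everything else is a direct application of the defining relations of a Lie superalgebra module.
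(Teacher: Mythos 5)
Your proof is correct. There is nothing in the paper to compare it against line by line: the paper states this lemma as a quoted result from \cite{YYX3} and gives no proof, so your argument stands on its own, and it is essentially the standard one. Two small remarks. First, under the paper's conventions the two basis elements of a $U(\mathfrak{h})$-free module are taken to be homogeneous (this is made explicit at the start of Section 3), so for a rank-one module the generator $v$ already lies in a single parity component and $M=U(\mathfrak{h})v$ is concentrated in that component simply because $U(\mathfrak{h})$ acts evenly; your domain/indecomposability step is then unnecessary, although it is a correct and slightly more general way to reach the same conclusion when the generator is not assumed homogeneous, and it is the only part of your write-up with any real content beyond the defining relations. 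Second, you are right to flag the implicit hypothesis $\mathfrak{h}\neq 0$: if $\mathfrak{h}=0$ the trivial one-dimensional module is a counterexample, so the statement tacitly assumes a nonzero $\mathfrak{h}$ (in the application, $\mathfrak{h}=\mathbb{C}L_0\oplus\mathbb{C}H_0$). The remaining steps --- $\mathfrak{l}_{\bar 1}v=0$, hence $[\mathfrak{l}_{\bar 1},\mathfrak{l}_{\bar 1}]v=\mathfrak{l}_{\bar 0}v=0$, contradicting injectivity of $u\mapsto uv$ --- are all sound.
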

Clearly,  $\mathfrak{g}$ has a degree-0 subalgebra  
$\mathfrak{h}= \mathbb{C} L_0 \oplus\mathbb{C} H_0 \subseteq\mathfrak{g}_{\bar0}$, and  $\mathfrak{g}$ is generated by odd elements $\{G_p,Q_p\mid p\in\mathbb{Z}+\frac{1}{2}\}$. From  Lemma \ref{le311},  we  see that there
do not exist $\mathfrak{g}$-modules which are free over $U(\mathfrak{h})$  of rank 1. Hence, we    classify the   $U(\mathfrak{h})$-free modules of rank 2 over    $\mathfrak{g}$ in this paper.

Let  $M=M_{\bar 0}\oplus M_{\bar1}$ be a  $\mathfrak{g}$-module such that it is    free of  rank $2$ as a $U(\mathfrak{h})$-module with two homogeneous
basis elements $u$  and $v$.
  Obviously, $u$ and $v$ have different parities. Let $u=\mathbf{1}_{\bar0}\in M_{\bar0}$ and 
$v=\mathbf{1}_{\bar1}\in M_{\bar1}$. From   \eqref{2.1},  we have
$L_0H_0=H_0L_0$.
So $$M=U(\mathfrak{h}) \mathbf{1}_{\bar0}\oplus  U(\mathfrak{h})\mathbf{1}_{\bar1}=\C[L_0,H_0] \mathbf{1}_{\bar0}\oplus  \C[L_0,H_0]\mathbf{1}_{\bar1}$$
with $M_{\bar 0}=\C[L_0,H_0] \mathbf{1}_{\bar0}$ and $M_{\bar1}=\C[L_0,H_0]\mathbf{1}_{\bar1}$.

By definition, it is easy to see the following lemma.

\begin{lemm}\label{lem3.2}
For any $m\in \mathbb{Z},p\in\mathbb{Z}+\frac{1}{2}, i \in \mathbb{N}$, we have
 \begin{eqnarray*}
  &&  L_m L_0^i = (L_0 + m)^i L_m, \ L_m H_0^i = H_0^i L_m,\\
  &&  H_m L_0^i = (L_0 + m)^i H_m, \ H_m H_0^i = H_0^i H_m,\\
  &&  G_p L_0^i = (L_0 + p)^i G_p, \ G_p H_0^i = H_0^i G_p,\\
  &&  Q_p L_0^i = (L_0 + p)^i Q_p, \ Q_p H_0^i = H_0^i Q_p.
 \end{eqnarray*}
\end{lemm}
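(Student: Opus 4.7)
The lemma is essentially a bookkeeping exercise: each identity asserts that a generator commutes past a power of $L_0$ or $H_0$ at the cost of a scalar shift, and every statement reduces by induction on $i$ to the defining brackets in (2.1).

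My plan is to handle the base case $i=1$ first for each of the eight identities. For example, the bracket $[L_m, L_n] = (m-n)L_{m+n}$ specialized to $n=0$ gives $[L_0, L_m] = -m L_m$, so $L_m L_0 = L_0 L_m + m L_m = (L_0+m) L_m$. Likewise $[L_0, H_m]=-m H_m$ yields $H_m L_0 = (L_0+m) H_m$, and $[L_0,G_p] = -pG_p$, $[L_0,Q_p] = -pQ_p$ yield the analogous shifts for $G_p$ and $Q_p$. The $H_0$ identities are even easier: $[H_0, L_m]=0$, $[H_0,H_m]=0$, $[H_0,G_p] = 0\cdot Q_p = 0$, and $[H_0,Q_p]=0$ all give strict commutativity, so the shift is by $0$.

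With the base cases in hand, the inductive step is uniform. If $X \in \{L_m, H_m, G_p, Q_p\}$ and we have already shown $X\, L_0^{i-1} = (L_0 + a)^{i-1} X$ for the appropriate scalar $a\in\{m,p\}$, then
\begin{equation*}
X\, L_0^{i} = (X\, L_0)\, L_0^{i-1} = (L_0 + a)\, X\, L_0^{i-1} = (L_0+a)(L_0+a)^{i-1} X = (L_0+a)^i X,
\end{equation*}
where we used that the scalar $a$ commutes with $L_0$ so the factors $(L_0+a)$ commute freely. The argument for $H_0^i$ is identical (and trivial since the shift is zero). No parity signs arise because $L_0$ and $H_0$ are even, so the super-bracket coincides with the ordinary commutator for all the pairs in question.

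There is no real obstacle here; the only thing to be careful about is the sign convention when converting a bracket $[L_0, X]$ to $[X, L_0]$ so that the shift appears as $+m$ (or $+p$) rather than $-m$. Once that is pinned down, the eight identities are immediate and can be stated together in one line at the end of the proof.
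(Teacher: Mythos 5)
Your proposal is correct and follows essentially the same route as the paper: verify the $i=1$ cases directly from the brackets in \eqref{2.1} and then induct on $i$, with the paper leaving the inductive step implicit where you spell it out. No issues.
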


\textbf{Proof.} According to     \eqref{2.1}, for any $m\in\mathbb{Z},p\in\mathbb{Z}+\frac{1}{2}$, it is easy to check that
\begin{eqnarray*}
  &&  L_m L_0 = (L_0 + m) L_m, \ L_m H_0 = H_0 L_m, \\
  &&  H_m L_0 = (L_0 + m) H_m, \ H_m H_0 = H_0 H_m, \\
  &&  G_p L_0 = (L_0 + p) G_p, \ G_p H_0 = H_0 G_p, \\
  &&  Q_p L_0 = (L_0 +p) Q_p, \ Q_p H_0 = H_0 Q_p.
\end{eqnarray*}

Then the lemma can be proven by induction on the degree of $L_0$ and $H_0$. \hfill $\square$

Since the even part of the N=1 Heisenberg-Virasoro superalgebra $\mathfrak{g}_{\bar0}$ is isomorphic to
the  Heisenberg–Virasoro algebra $\mathfrak{hv}$, we can regard both $M_{\bar0}$  and $M_{\bar1}$ as $\mathfrak{hv}$-modules.
According to Theorem \ref{tho2.1}, there exist $\lambda,\hat{\lambda}\in\mathbb{C}^*$,$\beta(H_0)=\sum_{i=0}\beta_iH_0^i\in \mathbb{C}[H_0],\hat{\beta}(H_0)=\sum_{i=0}\hat{\beta}_iH_0^i\in \mathbb{C}[H_0]$, $\beta_i,\hat{\beta}_i\in\mathbb{C}$  and $f(L_0,H_0)\in\mathbb{C}[L_0,H_0]$ 
such that
\begin{eqnarray*}
 &&  L_m f(L_0,H_0)\mathbf{1}_{\bar0} = \lambda^m (L_0+ m\beta(H_0)) f(L_0+m,H_0)\mathbf{1}_{\bar0}, 
 \\&&H_m f(L_0,H_0)\mathbf{1}_{\bar0} = \lambda^m H_0f(L_0+m,H_0)\mathbf{1}_{\bar0},
 \\&&L_m f(L_0,H_0) \mathbf{1}_{\bar1}= \hat{\lambda}^m (L_0+ m\hat{\beta}(H_0)) f(L_0+m,H_0)\mathbf{1}_{\bar1},  
 \\&&H_m f(L_0,H_0) \mathbf{1}_{\bar1}= \hat{\lambda}^m H_0f(L_0+m,H_0)\mathbf{1}_{\bar1},
 \\&&C_i f(L_0,H_0) \mathbf{1}_{\bar0}= C_i f(L_0,H_0) \mathbf{1}_{\bar1}=0,
\end{eqnarray*}
where $m,n\in\mathbb{Z},i=1,2,3.$
\begin{lemm}\label{lem3.22}
Keep the same notations as above. Then  $\lambda=\hat{\lambda}$ and there exists   
$c_p,\alpha_p\in\mathbb{C}^*$ such that one
of the following two cases occurs.
\begin{itemize}
\item[\rm(1)]$G_p  \mathbf{1}_{\bar{0}}=\frac{\lambda^{2p}}{c_p}(L_0+ 2p(\beta(H_0)-\frac{1}{2}))\mathbf{1}_{\bar{1}}$,  $G_p  \mathbf{1}_{\bar{1}}=c_p\mathbf{1}_{\bar{0}}$ and 
$\hat{\beta}(H_0)=\beta(H_0)-\frac{1}{2}$;
\item[\rm(2)]
$G_p  \mathbf{1}_{\bar{0}}=\alpha_p\mathbf{1}_{\bar{1}}$, $G_p  \mathbf{1}_{\bar{1}}=\frac{\lambda^{2p}}{\alpha_p}(L_0+ 2p\beta(H_0))\mathbf{1}_{\bar{0}}$ and  $\hat{\beta}(H_0)=\beta(H_0)+\frac{1}{2}$.
\end{itemize}
\end{lemm}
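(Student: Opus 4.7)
The plan is to encode the odd action of each $G_p$ as two unknown polynomials in $\mathbb{C}[L_0, H_0]$ and exploit the squaring identity $G_p^2 = L_{2p}$ (from $[G_p, G_p] = 2L_{2p}$), together with the commutation rules of Lemma~\ref{lem3.2}, to force one of the two normal forms. Since $G_p$ is odd it interchanges $M_{\bar 0}$ and $M_{\bar 1}$, so there exist unique polynomials $g_p, h_p \in \mathbb{C}[L_0, H_0]$ with
$$G_p \mathbf{1}_{\bar 0} = g_p(L_0, H_0)\mathbf{1}_{\bar 1}, \qquad G_p \mathbf{1}_{\bar 1} = h_p(L_0, H_0)\mathbf{1}_{\bar 0}.$$
Applying $G_p^2 = L_{2p}$ to both basis vectors and commuting $G_p$ past polynomials in $L_0, H_0$ via Lemma~\ref{lem3.2}, I obtain the two key identities
$$g_p(L_0+p, H_0)\, h_p(L_0, H_0) = \lambda^{2p}\bigl(L_0 + 2p\beta(H_0)\bigr),$$
$$h_p(L_0+p, H_0)\, g_p(L_0, H_0) = \hat{\lambda}^{2p}\bigl(L_0 + 2p\hat{\beta}(H_0)\bigr).$$

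Since both right-hand sides are linear in $L_0$, the $L_0$-degrees of $g_p$ and $h_p$ sum to $1$; exactly one of them is $L_0$-independent while the other is $L_0$-linear. Reading off the leading $L_0$-coefficient of the first identity, the $L_0$-independent factor (a priori in $\mathbb{C}[H_0]$) multiplied by a leading coefficient in $\mathbb{C}[H_0]$ equals $\lambda^{2p} \in \mathbb{C}^*$, so both must be units in $\mathbb{C}[H_0]$, hence nonzero scalars — this produces the scalars $c_p$ or $\alpha_p$ claimed in the two cases. Comparing leading $L_0$-coefficients of the two identities then gives $\lambda^{2p} = \hat{\lambda}^{2p}$ for every $p \in \mathbb{Z}+\tfrac{1}{2}$, and taking $p = \tfrac{1}{2}$ yields $\lambda = \hat{\lambda}$.

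To ensure that the choice of which polynomial is scalar is uniform in $p$, I apply $[G_p, G_q] = 2L_{p+q}$ (with $p \ne q$) to $\mathbf{1}_{\bar 0}$, obtaining the identity $g_q(L_0+p)h_p(L_0) + g_p(L_0+q)h_q(L_0) = 2\lambda^{p+q}(L_0 + (p+q)\beta(H_0))$. If $g_p$ and $h_q$ were both $L_0$-linear (i.e., $p$ in one case and $q$ in the other), the second summand would have $L_0$-degree $2$ with no possible cancellation, contradicting the $L_0$-linear right-hand side. Once the case is fixed uniformly, matching the constant $L_0$-terms of the two original identities determines the remaining $L_0$-linear polynomial (hence the formula for $G_p$), and equating the two resulting expressions for that constant term forces $\hat{\beta}(H_0) = \beta(H_0) - \tfrac{1}{2}$ in Case~(1) and $\hat{\beta}(H_0) = \beta(H_0) + \tfrac{1}{2}$ in Case~(2).

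The main obstacle is the uniformity step: one must carefully track leading $L_0$-coefficients across the supercommutator $[G_p, G_q]$ and exploit the fact (established earlier) that the $L_0$-independent factor is a nonzero scalar to rule out accidental cancellation between leading quadratic terms. Once uniformity is secured, everything else reduces to routine identification of coefficients of $1$ and $L_0$ in explicit polynomial identities in $\mathbb{C}[L_0, H_0]$.
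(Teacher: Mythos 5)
Your proposal is correct and follows essentially the same route as the paper: introduce the two unknown polynomials for the action of $G_p$ on $\mathbf{1}_{\bar 0}$ and $\mathbf{1}_{\bar 1}$, specialize the supercommutator $[G_p,G_q]$ to $p=q$ to obtain the two product identities, split into the two cases by $L_0$-degree counting in the factorization, and match coefficients to get $\lambda=\hat{\lambda}$ and $\hat{\beta}(H_0)=\beta(H_0)\mp\tfrac{1}{2}$. Your only genuine addition is the explicit uniformity-in-$p$ argument via the mixed bracket $[G_p,G_q]$ with $p\neq q$, a point the paper leaves implicit (it also follows at once because the two cases force incompatible values of the single polynomial $\hat{\beta}(H_0)$).
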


\begin{proof}
To prove this, we suppose that $G_p  \mathbf{1}_{\bar{0}}=f_p(L_0,H_0)\mathbf{1}_{\bar{1}}$ and $G_p  \mathbf{1}_{\bar{1}}=g_p(L_0,H_0)\mathbf{1}_{\bar{0}}$ for any $p\in\mathbb{Z}+\frac{1}{2}$.
For any $p, q \in \mathbb{Z}+\frac{1}{2}$, we have
\begin{eqnarray*}
&&G_p G_q  \mathbf{1}_{\bar{0}} + G_q G_p  \mathbf{1}_{\bar{0}} \\
&=& G_p f_q(L_0,H_0)  \mathbf{1}_{\bar{1}} + G_q f_p(L_0,H_0)  \mathbf{1}_{\bar{1}} \\
&=& f_q(L_0+p,H_0) G_p   \mathbf{1}_{\bar{1}} + f_p(L_0+q,H_0)G_q  \mathbf{1}_{\bar{1}} \\
&=& f_q(L_0+p,H_0) g_p(L_0,H_0)   \mathbf{1}_{\bar{0}} + f_p(L_0+q,H_0)g_q(L_0,H_0)  \mathbf{1}_{\bar{0}}.
\end{eqnarray*}
Using $[G_p, G_q]  \mathbf{1}_{\bar{0}}=2L_{p+q}\mathbf{1}_{\bar{0}}$, we obtain
\begin{eqnarray}\label{3.3}
2\lambda^{p+q}(L_0+(p+q)\sum_{i=0}\beta_i H_0^i)= f_q(L_0+p,H_0) g_p(L_0,H_0)    + f_p(L_0+q,H_0)g_q(L_0,H_0). 
\end{eqnarray}
For any $p,q\in\mathbb{Z}+\frac{1}{2}$,  we compute  
\begin{eqnarray*}
&&2\hat{\lambda}^{p+q}(L_0+(p+q)\sum_{i=0}\hat{\beta}_i H_0^i)\mathbf{1}_{\bar{1}}
\\&=& 2L_{p+q} \mathbf{1}_{\bar{1}}
 =[G_p, G_q] \mathbf{1}_{\bar{1}}\\
&=& G_p g_q(L_0,H_0)  \mathbf{1}_{\bar{0}} + G_q g_p(L_0,H_0)  \mathbf{1}_{\bar{0}} \\
&=& g_q(L_0+p,H_0) G_p   \mathbf{1}_{\bar{0}} + g_p(L_0+q,H_0)G_q  \mathbf{1}_{\bar{0}} \\
&=& g_q(L_0+p,H_0) f_p(L_0,H_0)   \mathbf{1}_{\bar{1}} + g_p(L_0+q,H_0)f_q(L_0,H_0)  \mathbf{1}_{\bar{1}}, 
\end{eqnarray*}
which forces
\begin{eqnarray}\label{3.4}
2\hat{\lambda}^{p+q}(L_0+(p+q)\sum_{i=0}\hat{\beta}_i H_0^i)= g_q(L_0+p,H_0) f_p(L_0,H_0)   + g_p(L_0+q,H_0)f_q(L_0,H_0).
\end{eqnarray}
Taking  $p=q$ in \eqref{3.3} and \eqref{3.4}, we conclude 
\begin{eqnarray}\label{3.5}
\lambda^{2p}(L_0+2p\sum_{i=0}\beta_i H_0^i)= f_p(L_0+p,H_0) g_p(L_0,H_0) 
\end{eqnarray}
and 
\begin{eqnarray}\label{3.6}
\hat{\lambda}^{2p}(L_0+2p\sum_{i=0}\hat{\beta}_i H_0^i)= g_p(L_0+p,H_0) f_p(L_0,H_0).
\end{eqnarray}
Then from \eqref{3.5}, we deduce  
\begin{eqnarray} 
&& \nonumber \label{eq3.9}f_p(L_0+p,H_0)=a_p(L_0+p)+b_p(H_0),\ g_p(L_0,H_0)=c_p\neq0,
\\&&a_pc_p=\lambda^{2p},(a_pp+b_p(H_0)) c_p= 2p\lambda^{2p}\sum_{i=0}\beta_i H_0^i
\end{eqnarray}
or 
\begin{eqnarray} 
&&\nonumber \label{eq3.10}f_p(L_0+p,H_0)=\alpha_p\neq0,g_p(L_0,H_0)=\beta_pL_0+\gamma_p(H_0),
\\&&\alpha_p\beta_p={\lambda}^{2p},\ \alpha_p\gamma_p(H_0)= 2p{\lambda}^{2p}\sum_{i=0}{\beta}_i H_0^i. 
\end{eqnarray}
Inserting \eqref{eq3.9} and \eqref{eq3.10}  into
\eqref{3.6},    we respectively confirm that  
$$\lambda=\hat{\lambda}, \hat{\beta}(H_0)={\beta}(H_0)-\frac{1}{2},a_p= \frac{\lambda^{2p}}{c_p},b_p(H_0)=\frac{2p\lambda^{2p}}{c_p}(\beta(H_0)-\frac{1}{2})$$
and 
$$\lambda=\hat{\lambda},\hat{\beta}(H_0)={\beta}(H_0)+\frac{1}{2},\beta_p=\frac{\lambda^{2p}}{\alpha_p},\gamma_p(H_0)=\frac{2p\lambda^{2p}}{\alpha_p}\beta(H_0).$$
The lemma holds.
\end{proof}
From Lemma \ref{lem3.22}, up to a parity, we can suppose $G_p  \mathbf{1}_{\bar{0}}=\alpha_p\mathbf{1}_{\bar{1}}, G_p  \mathbf{1}_{\bar{1}}=\frac{\lambda^{2p}}{\alpha_p}(L_0+ 2p\beta(H_0))\mathbf{1}_{\bar{0}}$ and  $\hat{\beta}(H_0)=\beta(H_0)+\frac{1}{2}$ without a loss of generality, where  $\alpha_p\in\mathbb{C}^*$.  Then we give the following statements.
\begin{lemm}\label{lem3.3}
For any $p\in\Z+\frac{1}{2}$, we obtain
\begin{itemize}
\item[\rm(1)]
$G_p  \mathbf{1}_{\bar{1}} = \lambda^{p+\frac{1}{2}} \big(L_0 + 2p\beta(H_0)\big)  \mathbf{1}_{\bar{0}}$ and 
$G_p  \mathbf{1}_{\bar{0}} = \lambda^{p-\frac{1}{2}}  \mathbf{1}_{\bar{1}}$;
\item[\rm(2)]
$Q_p  \mathbf{1}_{\bar{1}} =\lambda^{p+\frac{1}{2}}H_0\mathbf{1}_{\bar{1}}$ and $Q_p  \mathbf{1}_{\bar{0}} =0 $.
\end{itemize}
\end{lemm}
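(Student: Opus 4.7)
The plan is to pin down the family of nonzero scalars $\alpha_p$ left undetermined by Lemma~\ref{lem3.22} and then to compute the $Q_p$-action on both basis vectors. At each step I apply a Lie superbracket of $\mathfrak{g}$ to $\mathbf{1}_{\bar 0}$ or $\mathbf{1}_{\bar 1}$ and simplify by pushing $G_p, Q_p$ past powers of $L_0, H_0$ via Lemma~\ref{lem3.2}.

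For (1), I apply $[L_m, G_p] = (\frac{m}{2} - p) G_{m+p}$ to $\mathbf{1}_{\bar 0}$. Expanding the left-hand side using the known $L_m$-actions on $M_{\bar 0}$ and $M_{\bar 1}$ (recall $\hat{\beta}(H_0) = \beta(H_0) + \frac{1}{2}$) and using Lemma~\ref{lem3.2} to commute $G_p$ past $L_0$, the identity collapses to $\alpha_p \lambda^m (\tfrac{m}{2}-p) = (\tfrac{m}{2}-p)\alpha_{m+p}$. For $m \neq 2p$ this gives the recursion $\alpha_{m+p} = \lambda^m \alpha_p$, and chaining it over $p \in \mathbb{Z} + \frac{1}{2}$ (reaching $\alpha_{3/2}$, say, from $\alpha_{-1/2}$ via $m=2$ to bypass the excluded case $m=2p$) yields $\alpha_p = \lambda^{p-1/2}\alpha_{1/2}$ for every such $p$. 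Replacing $\mathbf{1}_{\bar 1}$ by $\alpha_{1/2}^{-1}\mathbf{1}_{\bar 1}$ absorbs the remaining constant without disturbing $U(\mathfrak{h})$-freeness, normalizing $\alpha_{1/2}=1$, which delivers~(1).

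For (2), I set $Q_p \mathbf{1}_{\bar 0} = A_p(L_0, H_0)\mathbf{1}_{\bar 1}$ and $Q_p \mathbf{1}_{\bar 1} = B_p(L_0, H_0)\mathbf{1}_{\bar 0}$. Applying $[H_m, Q_p]=0$ to $\mathbf{1}_{\bar 0}$ and invoking Lemma~\ref{lem3.2} yields the polynomial identity $H_0[A_p(L_0+m, H_0) - A_p(L_0, H_0)] = 0$; since $\mathbb{C}[L_0, H_0]$ is an integral domain, $A_p$ has no $L_0$-dependence, so $A_p = A_p(H_0)$. Next, $[L_m, Q_p] = -(\frac{m}{2}+p)Q_{m+p}$ on $\mathbf{1}_{\bar 0}$ gives $\lambda^m(\tfrac{m}{2}-p)A_p(H_0) = -(\tfrac{m}{2}+p)A_{m+p}(H_0)$, and the decisive specialization $m = -2p$ (an odd integer, since $p \in \mathbb{Z}+\frac{1}{2}$) collapses this to $-2p\,\lambda^{-2p}A_p(H_0) = 0$, forcing $A_p \equiv 0$ and hence $Q_p \mathbf{1}_{\bar 0} = 0$. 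Finally, the anticommutator $[G_p, Q_q] = H_{p+q}$ evaluated on $\mathbf{1}_{\bar 0}$, combined with $Q_q\mathbf{1}_{\bar 0} = 0$ and the formula in (1), reduces to $\lambda^{p-1/2}B_q(L_0, H_0) = \lambda^{p+q}H_0$, giving $B_q(L_0, H_0) = \lambda^{q+1/2}H_0$ and thus $Q_q\mathbf{1}_{\bar 1} = \lambda^{q+1/2}H_0\mathbf{1}_{\bar 0}$.

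The main obstacle is the argument forcing $A_p = 0$. A direct attempt via $[G_p, Q_q] = H_{p+q}$ on $\mathbf{1}_{\bar 0}$ only yields the weaker conclusion $A_p(H_0)\beta(H_0) = 0$, which is vacuous when $\beta \equiv 0$. The key observation is that the single specialization $m = -2p$ in $[L_m, Q_p]$ eliminates $A_p$ unconditionally, cleanly bypassing any case-split on $\beta$.
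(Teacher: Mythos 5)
Your proof is correct. Part (1) is essentially the paper's own argument: the paper applies $[L_m,G_p]$ to $\mathbf{1}_{\bar 1}$ rather than to $\mathbf{1}_{\bar 0}$, but this yields the same recursion $\alpha_{m+p}=\lambda^m\alpha_p$ for $m\neq 2p$ and the same two-step detour around the excluded case $m=2p$; the only slip is the direction of the normalization, which should replace $\mathbf{1}_{\bar 1}$ by $\alpha_{\frac{1}{2}}\mathbf{1}_{\bar 1}$ (not $\alpha_{\frac{1}{2}}^{-1}\mathbf{1}_{\bar 1}$), a trivial fix. Part (2) is where you genuinely diverge. The paper exploits $Q_p=\frac{1}{m}[H_m,G_{p-m}]$ for $m\neq 0$: since the actions of $H_m$ and of $G_{p-m}$ on both generators are already determined by part (1) and the $\mathfrak{hv}$-structure, $Q_p\mathbf{1}_{\bar 0}$ and $Q_p\mathbf{1}_{\bar 1}$ are computed outright, with no unknown coefficients to solve for. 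You instead introduce unknown polynomials $A_p,B_p$, eliminate the $L_0$-dependence of $A_p$ via $[H_m,Q_p]=0$, kill $A_p$ by the specialization $m=-2p$ in $[L_m,Q_p]=-(\frac{m}{2}+p)Q_{m+p}$, and then read off $B_q$ from $[G_p,Q_q]=H_{p+q}$. Both routes are valid; the paper's is shorter because it never treats the $Q$-action as unknown, while yours is more self-contained in that it uses only the structure constants rather than the fortunate fact that $Q_p$ is a bracket of operators whose action is already known. Finally, your conclusion $Q_p\mathbf{1}_{\bar 1}=\lambda^{p+\frac{1}{2}}H_0\mathbf{1}_{\bar 0}$ is the correct one: the statement as printed (and the last line of the paper's own computation) reads $\lambda^{p+\frac{1}{2}}H_0\mathbf{1}_{\bar 1}$, which violates parity ($Q_p$ is odd) and contradicts \eqref{2.9} in Proposition \ref{pro2.21}; that is a typo in the paper, not a defect of your argument.
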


\begin{proof}
{\rm (1)}
For any $m\in\mathbb{Z},p \in \mathbb{Z} + \frac{1}{2}$, we have 
\begin{eqnarray*}
&&(\frac{m}{2}-p)\frac{\lambda^{2(m+p)}}{\alpha_{m+p}}\big(L_0+2(m+p) \beta(H_0)\big)\mathbf{1}_{\bar{0}}
\\&=&[L_m, G_p] \mathbf{1}_{\bar{1}} 
\\&=& L_m G_p  \mathbf{1}_{\bar{1}} - G_p L_m \mathbf{1}_{\bar{1}} \\
&=& \frac{\lambda^{m+2p}}{\alpha_p}\big(L_0+m+2p\beta(H_0)\big)\big(L_0+m\beta(H_0)\big)\mathbf{1}_{\bar{0}}
\\&&-\frac{\lambda^{m+2p}}{\alpha_p}\big(L_0+p+m(\beta(H_0)+\frac{1}{2})\big)\big(L_0+2p\beta(H_0)\big)\mathbf{1}_{\bar{0}}
\\&=& (\frac{m}{2}-p)\frac{\lambda^{m+2p}}{\alpha_p}\big(L_0+2(m+p)\beta(H_0)\big)\mathbf{1}_{\bar{0}},
\end{eqnarray*}
which implies 
    \begin{eqnarray}\label{3.11}
    (\frac{m}{2}-p)\frac{\lambda^{2(m+p)}}{\alpha_{m+p}}=(\frac{m}{2}-p)\frac{\lambda^{m+2p}}{\alpha_p}.
\end{eqnarray}
Setting $p=\frac{1}{2}$ in \eqref{3.11}, we obtain  $\alpha_{m+\frac{1}{2}}=\lambda^{m}\alpha_{\frac{1}{2}}$ for $m\neq1$. Taking $m=-1$ and $p=\frac{3}{2}$ in \eqref{3.11} again, we see that $\alpha_{\frac{3}{2}}=\lambda \alpha_{\frac{1}{2}}$. So, we always have 
$\alpha_{m+\frac{1}{2}}=\lambda^{m}\alpha_{\frac{1}{2}}$ for any $m\in\mathbb{Z}$.
Replacing $\alpha_{\frac{1}{2}}\mathbf{1}_{\bar{1}}$ by $\mathbf{1}_{\bar{1}}$,  we confirm that (1).

{\rm (2)}
For any $0\neq m\in\mathbb{Z}$ and $p\in\mathbb{Z}+\frac{1}{2}$,
by using $[H_m, G_{p-m}] \mathbf{1}_{\bar{1}} = m Q_p \mathbf{1}_{\bar{1}}$, one can check that 
\begin{eqnarray*} 
Q_p \mathbf{1}_{\bar{1}} &=& \frac{1}{m}[H_m, G_{p-m}] \mathbf{1}_{\bar{1}} \\
&=& \frac{1}{m}  \lambda^{p-m+\frac{1}{2}} \big(L_0+m + 2(p-m)\beta(H_0)\big) H_m\mathbf{1}_{\bar{0}} - \frac{1}{m} G_{p-m} \lambda^m H_0  \mathbf{1}_{\bar{1}}
\\&=&\lambda^{p+\frac{1}{2}}H_0\mathbf{1}_{\bar{1}}.
\end{eqnarray*}

For any $0\neq m\in\mathbb{Z}$ and $p\in\mathbb{Z}+\frac{1}{2}$,
by $[H_m, G_{p-m}] \mathbf{1}_{\bar{0}} = m Q_p \mathbf{1}_{\bar{0}}$, we know 
that 
\begin{eqnarray*}
Q_p \mathbf{1}_{\bar{0}} &=& \frac{1}{m}[H_m, G_{p-m}] \mathbf{1}_{\bar{0}} \\
&=& \frac{1}{m} H_m G_{p-m}  \mathbf{1}_{\bar{0}} - \frac{1}{m} G_{p-m} H_m  \mathbf{1}_{\bar{0}}=0.
\end{eqnarray*}
The lemma clears.

\end{proof}
 
Now we present the main result of this section, which gives a
complete classification of   $U(\mathfrak{h})$-free modules of rank 2 over $\mathfrak{g}$.

\begin{theo}\label{th44555}
Let $M$  be a $\mathfrak{g}$-module such that the restriction of $M$ as a $U(\mathfrak{h})$-module is free of rank $2$, where  $\mathfrak{h}=\mathbb{C}L_0\oplus\mathbb{C}H_0$.
 Then up to a
parity, we have $M\cong\Omega(\lambda, \beta)$ for some  $\lambda\in\mathbb{C}^*$ and $\beta(y)\in\mathbb{C}[y]$  with the  $\mathfrak{g}$-module structure defined as in \eqref{2.3}-\eqref{qwe2.10}.
\end{theo}
\begin{proof}
Let $L_0\mathbf{1}_{\bar{0}}=x\mathbf{1}_{\bar{0}},H_0\mathbf{1}_{\bar{0}}=y\mathbf{1}_{\bar{0}},L_0\mathbf{1}_{\bar{1}}=s\mathbf{1}_{\bar{1}},H_0\mathbf{1}_{\bar{1}}=t\mathbf{1}_{\bar{1}}$. Based on  Theorem \ref{tho2.1}  and Lemmas \ref{lem3.2}-\ref{lem3.3}, it is easy to check the result.
\end{proof}

\section{The submodule structure of $\mathfrak{g}$-module $\Omega(\lambda,\beta)$}
In this section,  all submodules of $\Omega(\lambda,\beta)$ are precisely determined for any
$\lambda\in\mathbb{C}^*,\beta(y)\in\mathbb{C}[y]$, which  is mainly inspired by   \cite{YYX3}.

The following result can be found in  Proposition 7  of \cite{LZ0}.
\begin{prop}\label{pro4.1}
Let $P$ be a vector space over $\C$ and $P_1$ be a subspace of $P$. Assume
 that  $v_{i}\in P$
and $f_{i}(t)\in\C[t]$ with $\mathrm{deg}\,f_{i}(t)=i$ for $i=1,2,\ldots,s.$
If $\sum_{i=1}^{s}f_{i}(m)v_{i}\in P_1$ for  $m\in\Z,$
then $v_{i}\in P_1$ for all $i\in\mathbb{Z}$.
\end{prop}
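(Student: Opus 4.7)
The plan is to pass to the quotient $P/P_1$ and reduce the assertion to the familiar fact that a polynomial with coefficients in a vector space which vanishes at infinitely many integers must be identically zero, and then read off the vanishing of each $v_i$ from the triangular (distinct-degrees) shape of the polynomials $f_i(t)$.

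More concretely, write $\bar v_i = v_i + P_1 \in P/P_1$, and set $g(t) = \sum_{i=1}^{s} f_i(t)\,\bar v_i \in (P/P_1)[t]$. The hypothesis says $g(m) = 0$ in $P/P_1$ for every $m \in \Z$. I would first prove the auxiliary statement: a polynomial $g(t) = \sum_{k} t^k w_k$ with coefficients $w_k$ in any $\C$-vector space $V$ and with infinitely many zeros in $V$ must be zero. For this I would compose with an arbitrary linear functional $\phi\colon V \to \C$; then $\phi\circ g \in \C[t]$ has infinitely many zeros, hence vanishes, and since $\phi$ was arbitrary each $w_k = 0$. Applying this to $g$ above, I conclude that $g(t) = 0$ identically in $(P/P_1)[t]$.

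Next I would expand each $f_i(t) = \sum_{k=0}^{i} c_{ik}\, t^{k}$ with $c_{ii} \neq 0$ (the leading coefficient, since $\deg f_i = i$), and regroup
\begin{equation*}
0 \;=\; g(t) \;=\; \sum_{k=0}^{s} \Bigl(\sum_{i=\max(k,1)}^{s} c_{ik}\, \bar v_i\Bigr) t^{k}.
\end{equation*}
Equating coefficients of each $t^k$ to zero in $P/P_1$ gives a triangular system: from $t^{s}$ I read off $c_{ss}\,\bar v_s = 0$, so $\bar v_s = 0$; then from $t^{s-1}$ I obtain $c_{s-1,s-1}\,\bar v_{s-1} + c_{s,s-1}\,\bar v_s = 0$, forcing $\bar v_{s-1} = 0$; and proceeding by descending induction on $k$ I get $\bar v_i = 0$ for every $i = 1, \ldots, s$, which is exactly $v_i \in P_1$.

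I do not expect a real obstacle: the only slightly non-routine point is the vector-valued polynomial identity, and the clean way to handle it is to reduce to the scalar case by duality as above. An equivalent Vandermonde-style alternative would be to pick $s$ integers $m_1,\ldots,m_s$ making the matrix $(f_i(m_j))$ invertible (which follows from $\deg f_i = i$ and the Vandermonde determinant after a unitriangular change of basis on the $f_i$'s), and then invert this system to express each $v_i$ as a $\C$-linear combination of the given elements of $P_1$; this avoids quotients entirely, but the quotient argument is shorter and more transparent.
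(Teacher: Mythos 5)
Your proof is correct. Note that the paper itself does not prove this proposition: it is quoted verbatim (as Proposition 7) from the reference [LZ0] of L\"u and Zhao, so there is no in-paper argument to compare against. Your two-step argument --- pass to $P/P_1$, show a vector-valued polynomial vanishing on all of $\Z$ is identically zero by composing with linear functionals (or, equivalently, by expanding the finitely many coefficients in a basis of the finite-dimensional subspace they span), then exploit the strictly increasing degrees $\deg f_i=i$ to solve the triangular system of coefficient equations by descending induction --- is complete and is essentially the standard proof of this fact in the literature; your Vandermonde-style alternative, choosing integers $m_1,\ldots,m_s$ with $(f_i(m_j))$ invertible and expressing each $v_i$ as a $\mathbb{C}$-linear combination of elements of $P_1$, is the variant closer to how such statements are usually verified in [LZ0]. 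One cosmetic point: the conclusion as printed, ``$v_i\in P_1$ for all $i\in\mathbb{Z}$,'' is a typo for ``$v_i\in P_1$ for $i=1,\ldots,s$,'' which is exactly what your argument establishes; likewise the hypothesis should be read as holding for all (or at least infinitely many) $m\in\Z$, which is the only form your infinitely-many-zeros lemma needs.
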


 \begin{theo}\label{theo}
    Let $\lambda,\lambda^\prime\in\mathbb{C}^*,\beta(y),\beta^\prime(y)\in\mathbb{C}[y]$. Then $\Omega(\lambda,\beta)
	\cong
\Omega(\lambda^\prime,\beta^\prime)$ as $\mathfrak{g}$-modules  if and only
if  $\lambda=\lambda^\prime,\beta(y)=\beta^\prime(y)$.
\end{theo}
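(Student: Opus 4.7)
The backward direction is immediate: if $\lambda=\lambda'$ and $\beta(y)=\beta'(y)$, then formulas \eqref{2.3}--\eqref{2.9} produce identical $\mathfrak{g}$-actions on $\mathbb{C}[x,y]\oplus\mathbb{C}[s,t]$, so $\Omega(\lambda,\beta)=\Omega(\lambda',\beta')$ on the nose. I will therefore focus entirely on the forward direction, and my plan is to reduce it to the corresponding classification for $\mathfrak{hv}$ already supplied by Theorem \ref{tho2.1}(2).

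Suppose $\varphi\colon\Omega(\lambda,\beta)\to\Omega(\lambda',\beta')$ is a $\mathfrak{g}$-module isomorphism. As a morphism of $\mathbb{Z}_2$-graded modules it is even, and hence restricts to a linear isomorphism $\varphi|_{\bar 0}\colon(\Omega(\lambda,\beta))_{\bar 0}\to(\Omega(\lambda',\beta'))_{\bar 0}$. Since elements of $\mathfrak{hv}=\mathfrak{g}_{\bar 0}$ preserve the $\mathbb{Z}_2$-grading, this restricted map is automatically $\mathfrak{hv}$-linear. Comparing \eqref{2.3} and \eqref{2.4} with \eqref{eq2.22}, the $\mathfrak{hv}$-module $(\Omega(\lambda,\beta))_{\bar 0}=\mathbb{C}[x,y]$ is, by definition, a copy of $\Psi_{\mathfrak{hv}}(\lambda,\beta)$. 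Thus $\varphi|_{\bar 0}$ produces an $\mathfrak{hv}$-module isomorphism $\Psi_{\mathfrak{hv}}(\lambda,\beta)\cong\Psi_{\mathfrak{hv}}(\lambda',\beta')$, and Theorem \ref{tho2.1}(2) immediately forces $\lambda=\lambda'$ and $\beta(y)=\beta'(y)$, as desired.

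I expect no genuine obstacle to arise: the forward implication is essentially cost-free once one observes that the even part of $\Omega(\lambda,\beta)$ recovers the rank-$1$ $\mathfrak{hv}$-module of Theorem \ref{tho2.1}. A self-contained alternative, in case one prefers not to quote Theorem \ref{tho2.1}(2), is to exploit $U(\mathfrak{h})$-linearity of $\varphi|_{\bar 0}$: write $\varphi(\mathbf{1}_{\bar 0})=F(x,y)\mathbf{1}'_{\bar 0}$; bijectivity of multiplication by $F$ on $\mathbb{C}[x,y]$ forces $F\in\mathbb{C}[x,y]^{\times}=\mathbb{C}^{\ast}$, so after rescaling one may take $F=1$; equating $\varphi(L_1\mathbf{1}_{\bar 0})$ with $L_1\varphi(\mathbf{1}_{\bar 0})$, the coefficient of $x$ delivers $\lambda=\lambda'$ and the remaining terms then yield $\beta(y)=\beta'(y)$. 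As a sanity check, the same analysis applied to the odd parts identifies $(\Omega(\lambda,\beta))_{\bar 1}$ with $\Psi_{\mathfrak{hv}}(\lambda,\beta+\tfrac{1}{2})$ and gives the same conclusion, which is a reassuring cross-verification rather than an independent constraint.
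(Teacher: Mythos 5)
Your proposal is correct and follows essentially the same route as the paper: the paper's proof also restricts the isomorphism to the even part, recognizes it as $\Psi_{\mathfrak{hv}}(\lambda,\beta)$, and invokes Theorem \ref{tho2.1}(2) to conclude $\lambda=\lambda'$ and $\beta=\beta'$. Your write-up simply supplies more of the intermediate justification (evenness of the morphism, $\mathfrak{hv}$-linearity of the restriction) that the paper leaves implicit.
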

\begin{proof} The sufficiency is clear. Suppose that $\phi: \Omega(\lambda,\beta) 
	\rightarrow
 \Omega(\lambda^\prime,\beta^\prime)$  is a  $\mathfrak{g}$-module isomorphism. By Theorem \ref{tho2.1} (2), 
 one can see  that $\lambda=\lambda^\prime$ and $\beta(y)=\beta^\prime(y)$.
\end{proof}

Now we show the main results of this section.  
 \begin{theo}\label{le4.366}
Let $\lambda\in\C^*$, $\beta(y)=\sum_{i=0}\beta_iy^i\in\mathbb{C}[y]$.
For  any $g(y)\in\mathbb{C}[y]$, we denote $R_g=g(y)\mathbb{C}[x,y]\oplus g(t)\mathbb{C}[s,t]\subseteq\Omega(\lambda,\beta) $  and
$S_g=g(y)(x\mathbb{C}[x,y]+y\mathbb{C}[x,y])\oplus g(t)\mathbb{C}[s,t] \subseteq\Omega(\lambda,\beta)$. Then the following statements hold.
\begin{itemize}
\item[\rm(1)] 
If $\beta_0\neq0$, the set $\big\{R_g \mid g(y)\in \mathbb{C}[y]\big\}$ exhausts all $\mathfrak{g}$-submodules of $\Omega(\lambda,\beta)$. Moreover, $R_g\subseteq  R_{\hat{g}}$  for
any $g(y), \hat{g}(y)\in \mathbb{C} [y]$  with $\hat{g}(y) \mid g(y)$.
\item[\rm(2)] If $\beta_0=0$, the set $\big\{R_g, S_g \mid g(y)\in \mathbb{C}[y]\big\}$ exhausts all $\mathfrak{g}$-submodules of $\Omega(\lambda,\beta)$. Moreover, $S_g \subseteq R_g\subseteq  R_{\hat{g}}$  for
any $g(y), \hat{g}(y)\in \mathbb{C} [y]$  with $\hat{g}(y) \mid g(y)$.
\item[\rm(3)]  For any nonzero $g(y) \in \mathbb{C}[y]$, the quotient $\Omega(\lambda,\beta)/R_g$ is a free $\mathbb{C}[L_0]$-module of rank $2 \mathrm{deg}g(y)$.  Any maximal submodule of $\Omega(\lambda,\beta)$ is of the form $R_g$ for some $g(y)\in\mathbb{C}[y]$ with $\mathrm{deg}g(y)=1$.  
\end{itemize}
  \end{theo}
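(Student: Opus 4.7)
The plan is to prove the theorem in four stages: verify the listed families are submodules, classify all nonzero submodules, compute the quotients, and identify the maximal submodules. For the first stage, that each $R_g$ is a $\mathfrak{g}$-submodule follows directly from the action formulas of Proposition \ref{pro2.21}: $L_m, H_m$ only shift the $x$- or $s$-variable and leave $y, t$ untouched, so the $g(y)$ and $g(t)$ factors pass through, while $G_p, Q_p$ swap the even and odd parts via the substitution $y\leftrightarrow t$, exchanging $g(y)$ with $g(t)$. For $S_g$, the only nonobvious check is that $G_p(g(t)f(s,t)) = \lambda^{p+\frac{1}{2}}(x+2p\beta(y))g(y)f(x+p,y)$ lies in $g(y)(x\mathbb{C}[x,y]+y\mathbb{C}[x,y])$: the $xg(y)f(x+p,y)$ summand is automatic, and the $2p\beta(y)g(y)f(x+p,y)$ summand lies in $g(y)y\mathbb{C}[x,y]$ iff $y\mid\beta(y)$, i.e. $\beta_0=0$. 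The inclusions $S_g\subseteq R_g\subseteq R_{\hat g}$ are then a direct comparison of the defining ideals.

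For the second stage, let $W = W_{\bar 0}\oplus W_{\bar 1}$ be any nonzero $\mathfrak{g}$-submodule. Both parts must be nonzero: if $W_{\bar 1}=0$, then $G_p f = \lambda^{p-\frac{1}{2}}f(s+p,t)=0$ for all $p\in\mathbb{Z}+\frac{1}{2}$ and every $f\in W_{\bar 0}$, which forces $f\equiv 0$. For $f=\sum a_i(y)x^i\in W_{\bar 0}$, expanding $L_m f/\lambda^m$ and $H_m f/\lambda^m$ as polynomials in $m$ and applying Proposition \ref{pro4.1} to their $m^k$-coefficients puts $xf$, $yf$, and indeed every such coefficient into $W_{\bar 0}$; hence $W_{\bar 0}$ is an ideal of $\mathbb{C}[x,y]$, and symmetrically $W_{\bar 1}$ is an ideal of $\mathbb{C}[s,t]$. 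The leading $m^d$-coefficient of $H_m f/\lambda^m$ is $ya_d(y)$, so $W_{\bar 0}\cap\mathbb{C}[y]$ is a nonzero ideal $(g_0(y))$ in the PID $\mathbb{C}[y]$; similarly $W_{\bar 1}\cap\mathbb{C}[t]=(g_1(t))$. The cross-actions give $g_1\mid g_0$ (from $G_p g_0\in W_{\bar 1}$) and $g_0\mid yg_1$ (from $Q_p g_1(t)\in W_{\bar 0}$), so $g_0\in\{g_1,\,yg_1\}$.

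The substantive work is upgrading these intersections to the full ideals. I show $W_{\bar 1}=g_1(t)\mathbb{C}[s,t]$ by induction on $\deg_s$: for $f=\sum_{i=0}^d a_i(t)s^i\in W_{\bar 1}$, the $m^1$-coefficient of $L_m f/\lambda^m$ minus $(\beta(t)+\frac{1}{2})f$ and then minus $df$ produces $\sum_{i<d}(i-d)a_i(t)s^i\in W_{\bar 1}$ of strictly smaller $s$-degree, giving $g_1\mid a_i$ for $i<d$ by induction. The leading term satisfies $ta_d(t)\in(g_1)$, so $g_1\mid a_d$ follows directly when $t\nmid g_1$; otherwise $g_1=t\tilde g_1$ and one pushes $f$ into $W_{\bar 0}$ via $G_p$, combines with the structure of $W_{\bar 0}$ (where $g_0\in\{g_1, yg_1\}$), and specializes at $y=0$ to extract $g_1\mid a_d$. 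A parallel argument determines $W_{\bar 0}$ as either $g_0(y)\mathbb{C}[x,y]$ or $g_1(y)(x\mathbb{C}[x,y]+y\mathbb{C}[x,y])$, and shows the latter occurs only when $\beta_0=0$, matching the first stage. Part (2) is then direct: $\Omega(\lambda,\beta)/R_g$ decomposes as $\mathbb{C}[x,y]/(g(y))\oplus\mathbb{C}[s,t]/(g(t))$, a free $\mathbb{C}[L_0]$-module of rank $2\deg g$; and maximality forces $W=R_g$ (since $S_g\subsetneq R_g$ whenever $S_g$ exists) with $g$ irreducible in $\mathbb{C}[y]$, hence $\deg g=1$.

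The main obstacle is the induction step in the third paragraph, specifically the subcase $t\mid g_1$ where extracting $g_1\mid a_d$ requires bootstrapping between $W_{\bar 0}$ and $W_{\bar 1}$ through the cross-actions $G_p, Q_p$ together with a specialization argument at $y=0$. This is precisely where the bifurcation between the $R_g$ and $S_g$ families emerges and where the role of $\beta_0$ becomes operative.
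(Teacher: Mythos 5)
Your overall architecture is viable and in places genuinely different from the paper's (establishing first that $W_{\bar 0}$ and $W_{\bar 1}$ are ideals of $\mathbb{C}[x,y]$ and $\mathbb{C}[s,t]$, then pinning down $W_{\bar 0}\cap\mathbb{C}[y]=(g_0)$, $W_{\bar 1}\cap\mathbb{C}[t]=(g_1)$ and the relation $g_0\in\{g_1,yg_1\}$ via the cross-actions is clean and correct), and your first stage correctly isolates $\beta_0=0$ as the condition for $S_g$ to be a submodule. However, there is a genuine gap: the two decisive steps are only gestured at, and they are exactly where the content of the theorem lives. First, in the determination of $W_{\bar 1}$, the subcase $t\mid g_1$ is left as ``push $f$ into $W_{\bar 0}$ via $G_p$, combine with the structure of $W_{\bar 0}$, and specialize at $y=0$''; but at that point of your argument the structure of $W_{\bar 0}$ has not yet been determined (you defer it to a ``parallel argument''), so the bootstrap is circular as written, and ``specialize at $y=0$'' is not a proof. (A workable version exists --- e.g.\ if the leading coefficient were $\tilde g_1 b$ with $b(0)\neq 0$ one gets $\tilde g_1(t)s^d\in W_{\bar 1}$, then $\tilde g_1(y)x^{d+1}\in W_{\bar 0}$ from the $p^0$-coefficient of $G_p$, then $\tilde g_1(t)\in W_{\bar 1}\cap\mathbb{C}[t]$ from the top $p$-coefficient of $G_p$ applied again, contradicting minimality of $g_1=t\tilde g_1$ --- but you would need to supply it.) Second, the ``parallel argument'' for $W_{\bar 0}$ only yields the upper bound $W_{\bar 0}\subseteq \bar g_0(y)\bigl(x\mathbb{C}[x,y]+y\mathbb{C}[x,y]\bigr)$ when $y\mid g_0=y\bar g_0$; it does not decide whether $\bar g_0(y)x$ actually lies in $W_{\bar 0}$, i.e.\ it does not separate $R_g$ with $y\mid g$ from $S_{\bar g}$, nor does it show where $\beta_0$ enters. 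This separation is precisely what the paper's Claims achieve by applying the tailored operators $\frac{1}{\beta_0}\lambda^{-m}\bigl(L_m-m\sum_j\beta_{j+1}H_0^{j}H_m\bigr)$ (for $\beta_0\neq 0$) and $\lambda^{-m}\bigl(mG_{\frac12}G_{m-\frac12}-L_m\bigr)$ (for $\beta_0=0$), whose outputs carry the extra factors $\bigl(\tfrac{1}{\beta_0}x+m\bigr)$ and $(m-1)x$ that cleanly split off the constant-in-$x$ coefficient $f_0(y)$ from the $xf_i(y)$, after which minimal-degree generators $h(y)$ and $xg(y)$ and the divisibilities $g\mid h$, $h\mid yg$ force exactly the two families; the passage to $F_{\bar 1}$ is then controlled by the operator $G_{\frac12}-\sum_j\beta_{j+1}H_0^{j}Q_{\frac12}$. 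Until you execute your third paragraph at this level of detail, the classification --- and with it part (1) --- is not proved.
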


  \begin{proof}
{\rm (1)} To prove this, suppose that  $F=
F_{\bar0}\oplus F_{\bar1}$ is a nonzero submodule of $\Omega(\lambda, \beta)=\mathbb{C}[x,y]\oplus \mathbb{C}[s,t]$. Take a nonzero $f(x, y)=\sum_{i=0}^kx^if_i(y)\in\mathbb{C} [x,y]$. 
 Consider $\beta(0)=\beta_0\neq0$. We  give the following  claim. 
\begin{clai}\label{claim21}
      $f(x,y)=\sum^k_{i=0}x^i
f_i(y)\in F_{\bar0}
\Longleftrightarrow
  f_i(y)\in F_{\bar0}$ for $i=0, 1,\ldots,k$.
\end{clai}
For any $m\in\mathbb{Z}$, we compute that  \begin{eqnarray}\label{4411.2}
&&\nonumber
\sum_{i=0}^k(\frac{1}{\beta_0}x+m)(x+m)^{i}
f_i(y)
\\&=&
\frac{1}{\beta_0}\lambda^{-m}\big(L_m-m\sum_{j=0}\beta_{j+1}H_0^{j}H_m\big)\sum^k_{i=0}x^i
f_i(y)
\in F_{\bar0}.
\end{eqnarray}
Using Proposition 
\ref{pro4.1} and considering the coefficient of $m^{k+1}$ in \eqref{4411.2}, we have     $f_k(y)\in F_{\bar0}.$
According to the recursive method, one can check that $f_i(y)\in F_{\bar0}$ for $i=0,1,\ldots,k$.
The claim clears.

Let $g(y)\in F_{\bar0}$ be a nonzero polynomial  such that $\mathrm{deg}_y(g(y))$ is minimal.
For any
$f(x,y)=\sum^k_{i=0}
x^i
f_i(y) \in F_{\bar0}$, it follows from the Claim \ref{claim21} that  $g(y)\mid f_i(y)$  for $i=0,1,\ldots,k$.
Then, $F_{\bar0}$ is generated by $g(y)$, namely, $F_{\bar0} = g(y)\mathbb{C}[x,y]$.   For any $u(s,t) \in \mathbb{C}[s,t]$, we have $g(t)u(s,t)=G_{\frac{1}{2}}g(y)u(x-\frac{1}{2},y)\in F_{\bar1}$.
Hence, $g(t)\mathbb{C}[s,t]\in F_{\bar1}$. On the other hand, choosing any $r(s,t)\in F_{\bar1}$,   we obtain  that 
\begin{eqnarray*}
   && (x+\beta_0)r(x,y)
   \\&=&(x+\beta(y))r(x,y)-\sum_{j=0}\beta_{j+1}H_0^{j}yr(x,y)
    \\&=&\lambda^{-1}\big(G_{\frac{1}{2}}-\sum_{j=0}\beta_{j+1}H_0^{j}Q_{\frac{1}{2}}\big)r(s-\frac{1}{2},t)\in g(y)\mathbb{C}[x,y].
\end{eqnarray*} 
That is to say, $F_{\bar1}\subseteq g(t)\mathbb{C}[s,t].$
So, $F=R_g$.

 {\rm (2)} Suppose that  $F=
F_{\bar0}\oplus F_{\bar1}$ is a nonzero submodule of $\Omega(\lambda, \beta)=\mathbb{C}[x,y]\oplus \mathbb{C}[s,t]$. Let $0\neq f(x, y)=\sum_{i=0}^kx^if_i(y)\in\mathbb{C} [x,y]$. Consider $\beta(0)=\beta_0=0$. We     present the   claim as follows. 
\begin{clai}\label{claim1}
      $f(x,y)=\sum^k_{i=0}x^i
f_i(y)\in F_{\bar0}
\Longleftrightarrow
   f_0(y), xf_i(y)\in F_{\bar0}$ for $i= 1,\ldots,k$.
\end{clai}
 For any $0\neq m\in\mathbb{Z}$, we have \begin{eqnarray}\label{eq4.1} 
 (m-1)x\sum^k_{i=0}(x+m)^if_i(y)=\lambda^{-m}\big(mG_{\frac{1}{2}}G_{m-\frac{1}{2}}-L_m\big)\sum^k_{i=0}x^i
f_i(y)\in F_{\bar0}.
\end{eqnarray}
By Proposition 
\ref{pro4.1} and considering the coefficient of $m^{k+1}$ in \eqref{eq4.1}, one can see that   $$xf_k(y)\in F_{\bar0}.$$
From   $L_0^{k-1}xf_k(y)$, one gets $x^kf_k(y)\in F_{\bar0}$. 
Therefore, $f(x,y)-x^kf_k(y)=\sum_{i=0}^{k-1}x^if_i(y)\in F_{\bar0}$.
By recursive method, it is easy to get $f_0(y),xf_i(y)\in F_{\bar0}$ for $i=1,\ldots,k$.
The claim holds.

Let $h(y),xg(y)\in F_{\bar0}$ be nonzero polynomials such that $\mathrm{deg}_y(h(y)),\mathrm{deg}_{y}(xg(y))$ are minimal.
For any
$f(x,y)=\sum^k_{i=0}
x_i
f_i(y) \in F_{\bar0}$, it follows from the Claim \ref{claim1} that $h(y)\mid f_0(y)$, $g(y)\mid f_i(y)$  for $i=1,\ldots,k$.
By $xh(y) = L_0h(y) \in F_{\bar0}$, we have $g(y)\mid h(y)$. For any $0\neq m\in\mathbb{Z}$, we check  $$ yg(y)=\frac{1}{m}(\lambda^{-m}H_m-H_0)xg(y) \in  F_{\bar0}.$$ Then  we get $h(y)
\mid yg(y)$. Therefore, $h(y) = c_1g(y)$  or $h(y) = c_2yg(y)$  for
some nonzero $c_1, c_2 \in\mathbb{C}$. We give the following discussions.
\begin{case}
   $h(y)=c_1g(y)$  for some 
   $c_1\in \mathbb{C}^*$. 
\end{case}
In this case, $F_{\bar0}$ is generated by $g(y)$, namely, $F_{\bar0} = g(y)\mathbb{C}[x,y]$.   For any $u(s,t) \in \mathbb{C}[s,t]$, we have $g(t)u(s,t)=G_{\frac{1}{2}}g(y)u(x-\frac{1}{2},y)\in F_{\bar1}$.
Hence, $g(t)\mathbb{C}[s,t]\in F_{\bar1}$. On the other hand, choosing any $r(s,t)\in F_{\bar1}$,    we check  that 
\begin{eqnarray*}
   && xr(x,y)
   \\&=&(x+\beta(y))r(x,y)-\sum_{j=0}\beta_{j+1}H_0^{j}yr(x,y)
    \\&=&\lambda^{-1}\big(G_{\frac{1}{2}}-\sum_{j=0}\beta_{j+1}H_0^{j}Q_{\frac{1}{2}}\big)r(s-\frac{1}{2},t)\in g(y)\mathbb{C}[x,y].
\end{eqnarray*} 
That is to say, $F_{\bar1}\subseteq g(t)\mathbb{C}[s,t].$
So, $F=R_g$.
\begin{case}
   $h(y)=c_2yg(y)$  for some 
   $c_2\in \mathbb{C}^*$. 
\end{case}
In this case, $F_{\bar0}$ is generated by $xg(y)$ and $yg(y)$, namely, $F_{\bar0}=g(y)\big(x\mathbb{C}[x,y]+y\mathbb{C}[x,y]\big)$. For any $u(s,t)\in \mathbb{C}[s,t]$, we obtain
\begin{eqnarray*}
    g(t)u(s,t)=\lambda^{-1}G_{\frac{3}{2}}g(y)xu(x-\frac{3}{2},y)-G_{\frac{1}{2}}g(y)xu(x-\frac{1}{2},y)\in F_{\bar1}.
\end{eqnarray*}
Hence, $g(t)\mathbb{C}[s,t]\subseteq F_{\bar1}$. On the other hand, taking any $r(s,t)\in F_{\bar1}$,  it is easy to get 
\begin{eqnarray*}
   && xr(x,y)
   \\&=&(x+\beta(y))r(x,y)-\sum_{j=0}\beta_{j+1}H_0^{j}yr(x,y)
    \\&=&\lambda^{-1}\big(G_{\frac{1}{2}}-\sum_{j=0}\beta_{j+1}H_0^{j}Q_{\frac{1}{2}}\big)r(s-\frac{1}{2},t) 
   \in g(y)(x\mathbb{C}[x,y]+y\mathbb{C}[x,y]).
\end{eqnarray*} 
This forces $F_{\bar1}\subseteq g(t)\mathbb{C}[s,t].$ Consequently, $F_{\bar1}= g(t)
\mathbb{C}[s,t]$ and $F=S_g.$

{\rm(3)} We can directly  check the results by (1) and (2).
  \end{proof}  
  \begin{rema}
      In Theorem \ref{le4.366} (2), if  $g(y)=y$, we see that the quotient module $\Omega(\lambda,\beta)/R_g$ has a submodule $x\mathbb{C}[x]\oplus \mathbb{C}[s]$ (see  \cite{DL}).  That is to say, $\Omega(\lambda,\beta)/R_g$ is not irreducible and $R_g$ is not the maximal submodule.
  \end{rema}
From Theorem \ref{le4.366}, we know that any irreducible quotient of the $\mathfrak{g}$-module $\Omega(\lambda,\beta)$ is of the form $\Omega(\lambda,\beta)/R_g$ for some
$g(y)=y-b$ with $b\in\mathbb{C}^*$ or $\beta(b)\neq0$, which is denoted by $\Phi(\lambda, \beta,  b)$. Here, if $b=0$, we have $\beta_0\neq0$. Then as a $\mathbb{Z}_2$-graded vector space, $\Phi(\lambda, \beta,  b)=\mathbb{C} [x]\oplus\mathbb{C} [s]$ with $\Phi(\lambda, \beta,  b)_{\bar0} = \mathbb{C} [x]$ and $\Phi(\lambda, \beta,  b)_{\bar1} = \mathbb{C}[s]$. By Theorem \ref{le4.366} and the definition of $\Omega(\lambda,\beta)$, 
it is clear that   the $\mathfrak{g}$-module structure of $\Phi(\lambda, \beta,  b)$ is presented as follows
\begin{eqnarray}
 &&\label{4.3} L_m f(x) = \lambda^m (x+m\beta(b)) f(x+m), 
 \\&&L_m f(s)= \lambda^m (s+m(\beta(b)+\frac{1}{2})) f(s+m), 
 \\&& H_m f(x)= \lambda^m bf(x+m),
  \\&& H_m f(s)= \lambda^m bf(s+m),
 \\&& G_p f(x)= \lambda^{p-\frac{1}{2}} f(s+p),
 \\&&G_p f(s)= \lambda^{p+\frac{1}{2}} (x + 2p\beta(b)) f(x+p),
 \\&&  Q_p f(s)= \lambda^{p+\frac{1}{2}} bf(x+p), 
 \\&&\label{4.9}  Q_p f(x)=C_if(x)=C_if(s)= 0
\end{eqnarray}
for $m\in\Z,p\in\mathbb{Z}+\frac{1}{2},i=1,2,3.$
Note that 
the $\mathfrak{g}$-module $\Phi(\lambda, \beta,  b)$   is irreducible if and only if $\beta(b)\neq0$ or $b\neq0$ (see  \cite{DL}). For $\lambda^\prime\in\mathbb{C}^*,  \beta^\prime(b^\prime),b^\prime\in\mathbb{C}$,  as $\mathfrak{g}$-modules $\Phi(\lambda, \beta,  b)\cong\Phi(\lambda^\prime, \beta^\prime,  b^\prime)$ if and only if $\lambda=\lambda^\prime, \beta(b)=\beta^\prime(b^\prime),  b=b^\prime$ (see  \cite{DL}).    From  Theorem \ref{le4.366}, we get that $S_g=x\mathbb{C}[x]\oplus \mathbb{C}[s]$  is a submodule if $\beta(b)=b=0$.
\begin{lemm}\label{lemm4.4}
    Keep the notations as Theorem \ref{le4.366}. 
  Let $\lambda\in \mathbb{C}^*,\beta(y)\in\mathbb{C}[y]$. Assume $g(y), g^\prime(y)\in\mathbb{C}[y]$ with
$g(y)=(y-b)g^\prime(y)$. Then as a  $\mathfrak{g}$-module $R_{g^\prime}/R_g \cong \Phi(\lambda,\beta,b)$. In particular, if $(b,\beta(b))\neq (0,0)$, the quotient $R_{g^\prime}/R_g$ is irreducible.
\end{lemm}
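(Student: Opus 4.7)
The plan is to build an explicit $\mathfrak{g}$-module surjection $\varphi: R_{g'} \to \Phi(\lambda,\beta,b)$ whose kernel is precisely $R_g$, and then invoke the first isomorphism theorem. Note first that $R_g \subseteq R_{g'}$ follows from Theorem~\ref{le4.366}(1) since $g'(y) \mid g(y)$, so the quotient $R_{g'}/R_g$ makes sense. Since every element of $R_{g'}$ has the form $g'(y)f(x,y) + g'(t)h(s,t)$ with $f \in \mathbb{C}[x,y]$, $h \in \mathbb{C}[s,t]$, I define
\[
\varphi\bigl(g'(y)f(x,y)\bigr) = f(x,b), \qquad \varphi\bigl(g'(t)h(s,t)\bigr) = h(s,b),
\]
extended by linearity on the direct sum. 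Since $\mathbb{C}[x,y]$ and $\mathbb{C}[s,t]$ are free over $\mathbb{C}[y]$ and $\mathbb{C}[t]$ respectively, $f$ (resp.\ $h$) is uniquely determined by $g'(y)f(x,y)$ (resp.\ $g'(t)h(s,t)$), so $\varphi$ is well defined, and it is obviously a surjection onto $\mathbb{C}[x] \oplus \mathbb{C}[s] = \Phi(\lambda,\beta,b)$.

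Next I identify the kernel. A pure even element $g'(y)f(x,y)$ lies in $\ker\varphi$ iff $f(x,b) = 0$, which, viewing $f$ as a polynomial in $y$ over $\mathbb{C}[x]$, is equivalent to $(y-b) \mid f(x,y)$, i.e., to $g'(y)f(x,y) \in (y-b)g'(y)\mathbb{C}[x,y] = g(y)\mathbb{C}[x,y]$. The analogous argument on the odd part gives $\ker\varphi = g(y)\mathbb{C}[x,y] \oplus g(t)\mathbb{C}[s,t] = R_g$.

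The substantive step is to verify that $\varphi$ intertwines the $\mathfrak{g}$-actions given in Proposition~\ref{pro2.21} and in \eqref{4.3}--\eqref{4.9}. The key observation is that each of $L_m, H_m, G_p, Q_p$ acts on $\Omega(\lambda,\beta)$ by multiplication by a polynomial in $y$ (or $t$) times a shift $x \mapsto x+m$ or $x \mapsto s+p$ (possibly switching parity), and the prefactor $g'(y)$ or $g'(t)$ is preserved by every such shift. Thus, for example,
\[
\varphi\bigl(L_m \cdot g'(y)f(x,y)\bigr) = \lambda^m(x+m\beta(b))f(x+m,b) = L_m\bigl(f(x,b)\bigr) = L_m\,\varphi\bigl(g'(y)f(x,y)\bigr),
\]
and analogous one-line checks handle $H_m$ on either parity (specialization $y\mapsto b$ turns $\lambda^m y f(x+m,y)$ into $\lambda^m b f(x+m,b)$), as well as $G_p, Q_p$ on either parity. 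In particular, the trivial identity $Q_p \cdot g'(y)f(x,y) = 0$ matches $Q_p f(x) = 0$ in \eqref{4.9}, and the annihilation of $(y-b)$ under specialization is exactly what converts the $\Omega$-action formulas into the $\Phi$-action formulas.

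The only mildly delicate point will be keeping the parity-changing substitutions $x\mapsto s+p$, $y\mapsto t$ (and their reverse) straight for $G_p$ and $Q_p$, but this is mechanical given the parallel form of the defining formulas in Proposition~\ref{pro2.21} and \eqref{4.3}--\eqref{4.9}. Once the homomorphism property is in hand, the first isomorphism theorem yields $R_{g'}/R_g \cong \Phi(\lambda,\beta,b)$, completing the proof.
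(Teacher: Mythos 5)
Your proposal is correct and is essentially the paper's argument run in the opposite direction: the paper defines the inverse map $\psi:\Phi(\lambda,\beta,b)\to R_{g'}/R_g$, $f(x)\mapsto g'(y)f(x)$, $f(s)\mapsto g'(t)f(s)$, and checks directly that it is a $\mathfrak{g}$-module isomorphism, whereas you define the quotient map $R_{g'}\to\Phi(\lambda,\beta,b)$ by stripping $g'$ and evaluating at $y=b$ and then apply the first isomorphism theorem. The intertwining verification (specialization $y\mapsto b$ turning the $\Omega$-formulas into the $\Phi$-formulas) is the same computation in both versions, so there is no substantive difference.
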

\begin{proof}
    As a $\mathbb{Z}_2$-graded vector space,
$$R_{g^\prime}/R_g = (R_{g^\prime}/R_g)_{\bar0}\oplus (R_{g^\prime}/R_g)_{\bar1}=g^\prime(y)
\mathbb{C}[x]\oplus g^\prime(t)\mathbb{C}[s].$$
We define the following linear mapping
  \begin{eqnarray*}
\psi: \Phi(\lambda,\beta,b)&\longrightarrow& R_{g^\prime}/R_g
\\ f(x)&\longmapsto&g^\prime(y)f(x),
\\  f(s)&\longmapsto& g^\prime(t)f(s).
\end{eqnarray*}
From \eqref{2.3}-\eqref{qwe2.10} and \eqref{4.3}-\eqref{4.9}, we check that $\psi$ is a $\mathfrak{g}$-module isomorphism.
\end{proof}
\begin{coro}
   Let $\lambda \in \mathbb{C}^*,\beta(y)\in\mathbb{C}[y]$. Assume that  $g(y)$ is a monic polynomial of degree $n$,    $\alpha_1, \ldots, \alpha_n$ are all roots of $g(y)$ in $\mathbb{C}$ (counting the multiplicity) and assume $(\alpha_i,\beta(\alpha_i))\neq(0,0)$ for $i=1,\ldots,n$. Denote $g_i(y) = (y - \alpha_1)(y -\alpha_2) \cdots (y -\alpha_i)$ and $R_i = R_{g_i}/R_g$ for $i = 1, \ldots, n$. Then
\[
\Omega(\lambda, \beta)/R_g \supset R_1 \supset R_2 \supset \cdots \supset R_{n-1} \supset R_n = 0
\]
is a decomposition series of the  quotient module $\Omega(\lambda, \beta)/R_g$. Moreover, we have the following irreducible quotient module
\[
(\Omega(\lambda, \beta)/R_g)/R_1 \cong \Phi(\lambda, \beta, \alpha_1), \quad R_i/R_{i+1} \cong \Phi(\lambda, \beta, \alpha_{i+1}), \quad 1 \leq i \leq n-1.
\]
We also have $\Phi(\lambda, \beta, \alpha_i) \cong \Phi(\lambda, \beta, \alpha_{j})$ if and only if $\alpha_i=\alpha_j$ for $i,j=1,\ldots,n.$
\end{coro}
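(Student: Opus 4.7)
The plan is to reduce the statement to two earlier results: the divisibility description of submodules in Theorem~\ref{le4.366}(1) and the quotient identification in Lemma~\ref{lemm4.4}. The only additional ingredient needed is the third isomorphism theorem, applied to nested submodules of $\Omega(\lambda,\beta)$.

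First I would record the divisibility chain. By construction $g_i(y)=(y-\alpha_1)\cdots(y-\alpha_i)$, so $g_i(y)\mid g_{i+1}(y)$ for $1\le i\le n-1$, and $g_n(y)=g(y)$. Theorem~\ref{le4.366}(1) then gives the reverse chain of submodules
\[
\Omega(\lambda,\beta)\supset R_{g_1}\supset R_{g_2}\supset\cdots\supset R_{g_{n-1}}\supset R_{g_n}=R_g,
\]
where the inclusions are strict because each successive quotient is, by Lemma~\ref{lemm4.4}, isomorphic to a nonzero module $\Phi(\lambda,\beta,\alpha_{i+1})$. Quotienting the whole chain by $R_g$ yields
\[
\Omega(\lambda,\beta)/R_g\supset R_1\supset R_2\supset\cdots\supset R_{n-1}\supset R_n=0,
\]
which is the claimed filtration.

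Next I would identify the successive quotients. For $1\le i\le n-1$, the third isomorphism theorem gives
\[
R_i/R_{i+1}=(R_{g_i}/R_g)/(R_{g_{i+1}}/R_g)\cong R_{g_i}/R_{g_{i+1}}.
\]
Since $g_{i+1}(y)=(y-\alpha_{i+1})g_i(y)$, Lemma~\ref{lemm4.4} (applied with $g'=g_i$, $g=g_{i+1}$, $b=\alpha_{i+1}$) identifies this quotient with $\Phi(\lambda,\beta,\alpha_{i+1})$. For the top quotient $(\Omega(\lambda,\beta)/R_g)/R_1\cong\Omega(\lambda,\beta)/R_{g_1}$, I would apply Lemma~\ref{lemm4.4} with $g'(y)=1$ (so that $R_{g'}=\Omega(\lambda,\beta)$) and $g(y)=g_1(y)=y-\alpha_1$, obtaining the isomorphism with $\Phi(\lambda,\beta,\alpha_1)$.

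There is no real obstacle: the argument is a bookkeeping exercise combining the previous two results with standard quotient manipulations. The only mild subtlety is interpreting Lemma~\ref{lemm4.4} at the edge case $g'=1$, where $R_{g'}=\Omega(\lambda,\beta)$; this is immediate from the definition of $R_{g'}$ since the polynomial $1$ generates $\mathbb{C}[x,y]$ and $\mathbb{C}[s,t]$ as $\mathbb{C}[y]$- and $\mathbb{C}[t]$-modules, and the $\mathfrak{g}$-module isomorphism in the proof of Lemma~\ref{lemm4.4} goes through verbatim.
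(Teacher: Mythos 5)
Your proposal is correct and follows essentially the same route as the paper: the chain of submodules from Theorem~\ref{le4.366}, the third isomorphism theorem to reduce each subquotient to $R_{g_i}/R_{g_{i+1}}$, and Lemma~\ref{lemm4.4} to identify these with $\Phi(\lambda,\beta,\alpha_{i+1})$. Your explicit treatment of the top quotient via the edge case $g'=1$ is a small amount of extra care the paper leaves implicit, but it is not a different argument.
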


\begin{proof}
It follows from Lemma \ref{lemm4.4} and    isomorphisms $(\Omega(\lambda, \beta)/R_g)/(R_{g_1}/R_g) \cong \Omega(\lambda, \beta)/R_{g_1}$ and  
$
(R_{g_i}/R_g)/(R_{g_{i+1}}/R_g) \cong R_{g_i}/R_{g_{i+1}}$ for $ 1 \leq i \leq n-1$
that we have the assertion.
\end{proof}

\section{Modules over some subalgebras of $\mathfrak{g}$}
In this section, we investigate the non-weight modules over the four  subalgebras of $\mathfrak{g}$, namely, the
Heisenberg-Virasoro algebra, the Neveu-Schwarz algebra, the Fermion-Virasoro algebra and the
Heisenberg-Clifford superalgebra. 
\subsection{Modules over the Heisenberg-Virasoro algebra}\label{sec5.1}
The $\mathfrak{hv}$-modules $\Psi_{\mathfrak{hv}}(\lambda,\beta)$ were constructed and studied in \cite{HCS}. The authors only   gave a   filtration   for those $\mathfrak{hv}$-modules.
In this subsection, we will determine  all submodules of $\mathfrak{hv}$-modules $\Psi_{\mathfrak{hv}}(\lambda,\beta)$  for any
$\lambda\in\mathbb{C}^*,\beta(y)\in\mathbb{C}[y]$. 
 \begin{prop}\label{le5.1}
     Assume that $\lambda\in\mathbb{C}^*,\beta(y)\in\C[y]$. 
For  any $g(y)\in\mathbb{C}[y]$, let $R_g^{\mathfrak{hv}}=g(y)\mathbb{C}[x,y]\subseteq\Psi_{\mathfrak{hv}}(\lambda,\beta) $  and
$S_g^{\mathfrak{hv}}=g(y)(x\mathbb{C}[x,y]+y\mathbb{C}[x,y]) \subseteq\Psi_{\mathfrak{hv}}(\lambda,\beta)$. Then we have the following results.
\begin{itemize}
\item[\rm(1)] If $\beta_0\neq0$, the set $\Big\{R_g^{\mathfrak{hv}} \mid g(y)\in \mathbb{C}[y]\Big\}$ exhausts all $\mathfrak{hv}$-submodules of $\Psi_{\mathfrak{hv}}(\lambda,\beta)$. Moreover, $R_g^{\mathfrak{hv}}\subseteq  R_{\hat{g}}^{\mathfrak{hv}}$  for
any $g(y), \hat{g}(y)\in \mathbb{C} [y]$  with $\hat{g}(y) \mid g(y)$.
If $\beta_0=0$, the set $\Big\{R_g^{\mathfrak{hv}}, S_g^{\mathfrak{hv}} \mid g(y)\in \mathbb{C}[y]\Big\}$ exhausts all $\mathfrak{hv}$-submodules of $\Psi_{\mathfrak{hv}}(\lambda,\beta)$. Moreover, $S_g^{\mathfrak{hv}} \subseteq R_g^{\mathfrak{hv}}\subseteq  R_{\hat{g}}^{\mathfrak{hv}}$  for
any $g(y), \hat{g}(y)\in \mathbb{C} [y]$  with $\hat{g}(y) \mid g(y)$.
\item[\rm(2)] For any nonzero $g(y) \in \mathbb{C}[y]$, the quotient module $\Psi_{\mathfrak{hv}}(\lambda,\beta)/R_g^{\mathfrak{hv}}$ is a free $\mathbb{C}[L_0]$-module of rank $\mathrm{deg}g(y)$. Any maximal submodule of $\Psi_{\mathfrak{hv}}(\lambda,\beta)$ is of the form $R_g^{\mathfrak{hv}}$ for some $g(y)\in\mathbb{C}[y]$ with $\mathrm{deg}g(y)=1$.  
\end{itemize}
  \end{prop}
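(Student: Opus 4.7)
The plan is to parallel the proof of Theorem~\ref{le4.366} but carried out entirely inside $\mathfrak{hv}$, the crucial idea being that a single linear operator built only from $L_m$ and $H_m$ already strips away the non-constant part of $\beta(y)$ accompanying the $H_m$-action and leaves only the contribution of $\beta_0$. This replaces \emph{both} the $L_m - m\sum\beta_{j+1}H_0^{j}H_m$ step (for $\beta_0\neq 0$) and the odd-operator trick $mG_{1/2}G_{m-1/2}-L_m$ (for $\beta_0=0$) that were used for $\mathfrak{g}$.

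Concretely, let $F$ be a nonzero $\mathfrak{hv}$-submodule and pick $f(x,y)=\sum_{i=0}^{k}x^{i}f_{i}(y)\in F$. Setting
$$
O_{m}:=\lambda^{-m}\Bigl(L_{m}-m\sum_{j\ge 0}\beta_{j+1}H_{0}^{\,j}H_{m}\Bigr),\qquad m\in\mathbb{Z},
$$
a direct calculation from \eqref{eq2.22} gives $O_{m}f(x,y)=(x+m\beta_{0})f(x+m,y)\in F$. When $\beta_{0}\neq 0$, I would divide by $\beta_{0}$ and read off the coefficient of $m^{k+1}$ using Proposition~\ref{pro4.1} to obtain $f_{k}(y)\in F$, then iterate to get every $f_{i}(y)\in F$; picking a minimal-degree $g(y)\in F\cap\mathbb{C}[y]$ one would force $g\mid f_{i}$ for each $i$ (by Euclidean division in $\mathbb{C}[y]$ together with $H_{0}^{j}g=y^{j}g\in F$), while $L_{0}^{a}H_{0}^{b}g=x^{a}y^{b}g\in F$ gives the reverse inclusion, yielding $F=R_{g}^{\mathfrak{hv}}$. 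When $\beta_{0}=0$, the same operator gives $xf(x+m,y)\in F$; the coefficient of $m^{k}$ is $xf_{k}(y)\in F$, iterating $L_{0}=x\cdot$ promotes this to $x^{k}f_{k}(y)\in F$, and recursion delivers $xf_{i}(y)\in F$ for $i\ge 1$ together with $f_{0}(y)\in F$. Picking $h(y)\in F\cap\mathbb{C}[y]$ and $g(y)$ with $xg(y)\in F$, both of minimal $y$-degree, the identities $L_{0}h=xh\in F$ and $\lambda^{-1}H_{1}(xg)-H_{0}(xg)=yg\in F$ force $g\mid h$ and $h\mid yg$, so $h=c_{1}g$ or $h=c_{2}yg$, giving $F=R_{g}^{\mathfrak{hv}}$ or $F=S_{g}^{\mathfrak{hv}}$ respectively.

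For part~(2), the isomorphism $\Psi_{\mathfrak{hv}}(\lambda,\beta)/R_{g}^{\mathfrak{hv}}\cong \mathbb{C}[x]\otimes\bigl(\mathbb{C}[y]/(g(y))\bigr)$, together with the fact that $L_{0}$ acts as multiplication by $x$, identifies the quotient as a $\mathbb{C}[L_{0}]$-free module of rank $\deg g(y)$, and the equivalence $R_{\hat{g}}^{\mathfrak{hv}}\supseteq R_{g}^{\mathfrak{hv}}\Leftrightarrow \hat{g}\mid g$ from part~(1) reduces the maximality of $R_{g}^{\mathfrak{hv}}$ to the irreducibility of $g$ over $\mathbb{C}$, i.e.\ $\deg g=1$. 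The principal obstacle I foresee is exactly the $\beta_{0}=0$ case, because the $\mathfrak{g}$-argument of Theorem~\ref{le4.366} leaned on the supercommutator $\{G_{p},G_{q}\}=2L_{p+q}$ that is not available in $\mathfrak{hv}$; the workaround is the observation above that $O_{m}$, built purely from $L_{m}$ and $H_{m}$, still outputs $xf(x+m,y)$ on the nose when $\beta_{0}=0$, after which the Proposition~\ref{pro4.1} extraction and $L_{0}$-promotion steps close the argument without recourse to odd generators.
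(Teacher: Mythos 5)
Your proof follows essentially the same route as the paper: the operator $\lambda^{-m}\bigl(L_{m}-m\sum_{j}\beta_{j+1}H_{0}^{j}H_{m}\bigr)$ producing $(x+m\beta_{0})f(x+m,y)$ is exactly the paper's key computation (its equation (5.2)), and the subsequent Proposition~\ref{pro4.1} coefficient extraction, the minimal-degree generators $h(y)$ and $xg(y)$, and the divisibility argument via $yg(y)=\frac{1}{m}(\lambda^{-m}H_{m}-H_{0})(xg(y))$ all coincide with the paper's proof. The argument is correct and no further comment is needed.
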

  \begin{proof}
  {\rm (1)}
  If $\beta_0\neq0$,
  by  the similar computation in the proof  of Theorem \ref{le4.366}, we can check the  statement.  So,  we omit further details. 
  
  In the following proof, we only consider $\beta_0=0$.
Suppose that  $F^{\mathfrak{hv}}$ is a nonzero submodule of $\Psi_{\mathfrak{hv}}(\lambda, \beta)=\mathbb{C}[x,y]$. Choose  a nonzero element $f(x, y)=\sum_{i=0}^kx^if_i(y)\in\mathbb{C} [x,y]$.
We first present the  claim as follows.  
\begin{clai}\label{claim2}
      $f(x,y)=\sum^k_{i=0}x^i
f_i(y)\in F^{\mathfrak{hv}}
\Longleftrightarrow
   f_0(y), xf_i(y)\in F^{\mathfrak{hv}}$ for $i= 1,\ldots,k$.
\end{clai}
For any $m\in\mathbb{Z}$,  we confirm that 
 \begin{eqnarray}\label{55.2}
&&\sum_{i=0}^kx(x+m)^{i}
f_i(y)=\lambda^{-m}(L_m-m\sum_{j=0}\beta_{j+1}H_0^jH_m)\sum^k_{i=0}x^i
f_i(y)
\in F^{\mathfrak{hv}}.
\end{eqnarray}
Based on Proposition 
\ref{pro4.1} and considering the coefficient of $m^{k}$ in \eqref{55.2}, one has  $xf_k(y)\in F^{\mathfrak{hv}}.$
Therefore, we always get  $$xf_k(y)\in F^{\mathfrak{hv}}$$ for any $\beta(y)\in\mathbb{C}[y]$ with $\beta_0=0$.
Using  the recursive method,   we obtain   $f_0(y),xf_i(y)\in F^{\mathfrak{hv}}$ for $i=1,\ldots,k$.
The claim holds.

Suppose that  $h(y),xg(y)\in F^{\mathfrak{hv}}$ are nonzero polynomials such that $\mathrm{deg}_y(h(y)),\mathrm{deg}_{y}(xg(y))$ are minimal.
Then for any
$f(x,y)=\sum^k_{i=0}
x^if_i(y) \in F^{\mathfrak{hv}}$, it follows from  Claim \ref{claim2} that $h(y)\mid f_0(y)$, $g(y)\mid f_i(y)$  for $i=1,\ldots,k$.

We note that $xh(y) = L_0h(y) \in F^{\mathfrak{hv}}$, we have $g(y)\mid h(y)$. For any $0\neq m\in\mathbb{Z}$, it is clear that  $yg(y)=\frac{1}{m}(\lambda^{-m}H_m-H_0)(xg(y)) \in  F^{\mathfrak{hv}}$. Thus $h(y)
\mid yg(y)$. Therefore, $h(y) = c_1g(y)$  or $h(y) = c_2yg(y)$  for
some nonzero $c_1, c_2 \in\mathbb{C}$. 
 If  $h(y)=c_1g(y)$  for some 
   $c_1\in \mathbb{C}^*$,
one can  see that $F^{\mathfrak{hv}}$ is generated by $g(y)$, namely, $F^{\mathfrak{hv}}= g(y)\mathbb{C}[x,y]$.  
If $h(y)=c_2yg(y)$  for some 
   $c_2\in \mathbb{C}^*$,
we know that $F$ is generated by $xg(y)$ and $yg(y)$, namely, $F^{\mathfrak{hv}}=g(y)(x\mathbb{C}[x,y])+y\mathbb{C}[x,y])$.

{\rm(2)}  We can directly  check it  by (1)
  \end{proof} 
\begin{rema}
The proof  of Claim \ref{claim2} can provide  a   more simple proof than that       Claim of Theorem 3.12    in  \cite{YYX3}, which  mainly depends on Proposition \ref{pro4.1}  and some linear operators.
\end{rema}

By Proposition \ref{le5.1}, any irreducible quotient of the $\mathfrak{hv}$-module $\Psi_{\mathfrak{hv}}(\lambda,\beta)$ is of the form $\Psi_{\mathfrak{hv}}(\lambda,\beta)/R_g^{\mathfrak{hv}}$ for some
$g(y)=y-b$ with $b\in\mathbb{C}^*$ or $\beta(b)\neq0$, which is denoted by $\Phi_{\mathfrak{hv}}(\lambda, \beta, b)$. Then  $\Phi_{\mathfrak{hv}}(\lambda, \beta, b)=\mathbb{C} [x]$. From Proposition \ref{le5.1}   and \eqref{eq2.22},
  the $\mathfrak{hv}$-module structure of $\Phi_{\mathfrak{hv}}(\lambda, \beta, b)$ is given as  
\begin{eqnarray} \label{le5.6}
 L_m f(x) = \lambda^m (x+m\beta(b)) f(x+m),\  
  H_m f(x)= \lambda^m bf(x+m),\ C_if(x)=0
  \end{eqnarray}
for $m\in\Z,i=1,2,3.$
Note that $\mathfrak{hv}$-module $\Phi_{\mathfrak{hv}}(\lambda, \beta, b)$ is irreducible if and only if $\beta(b)\neq0$ or $b\neq0$ (see \cite{CG}).
It follows from  Proposition \ref{le5.1}  that   $S_g^{\mathfrak{hv}}=x\mathbb{C}[x]$
is a submodule if $\beta(b)=b=0$.

\begin{prop}\label{lemm5.2}
    Let $\lambda\in \mathbb{C}^*,b\in\mathbb{C}$ and assume $g(y), g^\prime(y)\in\mathbb{C}[y]$ with
$g(y)=(y-b)g^\prime(y)$.   Then as an   $\mathfrak{hv}$-module $R_{g^\prime}^{\mathfrak{hv}}/R_g^{\mathfrak{hv}} \cong \Phi_{\mathfrak{hv}}(\lambda,\beta,b)$.   If   $(b,\beta(b))\neq(0,0)$, the quotient module is irreducible.
\end{prop}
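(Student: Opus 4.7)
The plan is to mimic Lemma \ref{lemm4.4} but in the purely even setting, building an explicit isomorphism and checking it intertwines the $L_m$ and $H_m$ actions. As a vector space, $R_{g'}^{\mathfrak{hv}}/R_{g}^{\mathfrak{hv}} = g'(y)\mathbb{C}[x,y]/(y-b)g'(y)\mathbb{C}[x,y]$, and since every polynomial $h(x,y)\in\mathbb{C}[x,y]$ satisfies $h(x,y)\equiv h(x,b) \pmod{(y-b)}$, each coset of $R_{g'}^{\mathfrak{hv}}/R_{g}^{\mathfrak{hv}}$ has a unique representative of the form $g'(y)f(x)$ with $f(x)\in\mathbb{C}[x]$.

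First I would define the $\mathbb{C}$-linear map
\[
\psi:\Phi_{\mathfrak{hv}}(\lambda,\beta,b)\longrightarrow R_{g'}^{\mathfrak{hv}}/R_{g}^{\mathfrak{hv}},\qquad f(x)\longmapsto g'(y)f(x)+R_{g}^{\mathfrak{hv}},
\]
and observe that by the remark above $\psi$ is a bijection: surjectivity follows from reducing any representative $g'(y)h(x,y)$ modulo $(y-b)$, and injectivity is immediate because $g'(y)f(x)\in (y-b)g'(y)\mathbb{C}[x,y]$ forces $(y-b)\mid f(x)$ in $\mathbb{C}[x,y]$, hence $f(x)=0$.

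Next I would verify that $\psi$ is an $\mathfrak{hv}$-module homomorphism. Using the formulas \eqref{eq2.22} defining the action on $\Psi_{\mathfrak{hv}}(\lambda,\beta)$, and writing $\beta(y)-\beta(b)=(y-b)q(y)$ for some $q(y)\in\mathbb{C}[y]$, the key congruences modulo $R_{g}^{\mathfrak{hv}}$ are
\begin{align*}
L_m\bigl(g'(y)f(x)\bigr)&=\lambda^{m}\bigl(x+m\beta(y)\bigr)g'(y)f(x+m)\\
&\equiv \lambda^{m}\bigl(x+m\beta(b)\bigr)g'(y)f(x+m)\pmod{R_{g}^{\mathfrak{hv}}},\\
H_m\bigl(g'(y)f(x)\bigr)&=\lambda^{m}y\,g'(y)f(x+m)\equiv \lambda^{m}b\,g'(y)f(x+m)\pmod{R_{g}^{\mathfrak{hv}}},
\end{align*}
which match the $\mathfrak{hv}$-action \eqref{le5.6} on $\Phi_{\mathfrak{hv}}(\lambda,\beta,b)$ under $\psi$. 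This makes $\psi$ an $\mathfrak{hv}$-module isomorphism.

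I do not anticipate a genuine obstacle; the content lies in the two congruences above, where the factor $(y-b)$ appearing in both $y-b$ and $\beta(y)-\beta(b)$ kills the extra terms exactly as needed. The only care required is verifying that $\psi$ is well-defined and bijective on the quotient, which follows from the unique representative description, and then the routine substitution checks for $L_m$ and $H_m$.
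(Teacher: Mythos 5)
Your proposal is correct and follows essentially the same route as the paper: you define the identical map $\psi\colon f(x)\mapsto g'(y)f(x)$ and verify it is an $\mathfrak{hv}$-module isomorphism. The paper simply states that the verification is easy via \eqref{eq2.22} and \eqref{le5.6}, whereas you have supplied the details (unique coset representatives for bijectivity, and the two congruences using $(y-b)\mid\bigl(\beta(y)-\beta(b)\bigr)$), all of which are accurate.
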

\begin{proof}
The quotient  space is
$$R_{g^\prime}^\mathfrak{hv}/R_g^\mathfrak{hv} =g^\prime(y)
\mathbb{C}[x].$$
Define the following linear mapping
  \begin{eqnarray*}
\psi: \Phi_{\mathfrak{hv}}(\lambda, \beta, b)&\longrightarrow& R_{g^\prime}^\mathfrak{hv}/R_g^\mathfrak{hv}
\\ f(x)&\longmapsto&g^\prime(y)f(x).
\end{eqnarray*}
According to  \eqref{eq2.22} and \eqref{le5.6}, it is easy to confirm that $\psi$ is an $\mathfrak{hv}$-module isomorphism.
\end{proof}
\begin{coro}
Let $\lambda\in \mathbb{C}^*$ and  let $g(y)$ be a monic polynomial of degree $n$. Assume that  $b_1, \ldots, b_n$ are all roots of $g(y)$ in $\mathbb{C}$ (counting the multiplicity) with $(b_i,\beta(b_i))\neq(0,0)$ for $i=1,\ldots,n$. Set $g_i(y) = (y - b_1)(y - b_2) \cdots (y - b_i)$ and $R_i^\mathfrak{hv} =R_{g_i}^\mathfrak{hv}/R_g^\mathfrak{hv}$ where $i = 1, \ldots, n$. Then
\[
\Psi_\mathfrak{hv}(\lambda, \beta)/R_g^\mathfrak{hv} \supset R_1^\mathfrak{hv} \supset R_2^\mathfrak{hv} \supset \cdots \supset R_{n-1}^\mathfrak{hv} \supset R_n^\mathfrak{hv} = 0
\]
is a decomposition series of the quotient module $\Psi_\mathfrak{hv}(\lambda, \beta)/R_g^\mathfrak{hv}$. Moreover, we obtain the irreducible quotient module
\[
(\Psi_\mathfrak{hv}(\lambda, \beta)/R_g^\mathfrak{hv})/R_1^\mathfrak{hv} \cong \Phi_\mathfrak{hv}(\lambda, \beta, b_1), \quad R_i^\mathfrak{hv}/R_{i+1}^\mathfrak{hv} \cong \Phi_\mathfrak{hv}(\lambda, \beta, b_{i+1}), \quad 1 \leq i \leq n-1.
\]
\end{coro}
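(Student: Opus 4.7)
The plan is to mimic exactly the proof sketch used for the analogous corollary at the end of Section~4, transferring it from the full superalgebra $\mathfrak{g}$ to the subalgebra $\mathfrak{hv}$. All the ingredients have already been put in place: Proposition~\ref{le5.1} controls containment among the submodules $R_g^{\mathfrak{hv}}$, Proposition~\ref{lemm5.2} identifies the consecutive subquotients with the irreducible modules $\Phi_{\mathfrak{hv}}(\lambda,\beta,b)$, and the third isomorphism theorem for modules stitches these together. So the proof is essentially a bookkeeping argument in three short steps.

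First, I would verify the chain of inclusions. By construction $g_i(y)\mid g_{i+1}(y)$ and $g_i(y)\mid g(y)$ for each $i$, so Proposition~\ref{le5.1}(1) gives $R_g^{\mathfrak{hv}}\subseteq R_{g_{i+1}}^{\mathfrak{hv}}\subseteq R_{g_i}^{\mathfrak{hv}}\subseteq \Psi_{\mathfrak{hv}}(\lambda,\beta)$. Passing to the quotient by $R_g^{\mathfrak{hv}}$ yields the descending chain
\[
\Psi_{\mathfrak{hv}}(\lambda,\beta)/R_g^{\mathfrak{hv}} \supseteq R_1^{\mathfrak{hv}} \supseteq R_2^{\mathfrak{hv}} \supseteq \cdots \supseteq R_{n-1}^{\mathfrak{hv}} \supseteq R_n^{\mathfrak{hv}}=R_g^{\mathfrak{hv}}/R_g^{\mathfrak{hv}}=0,
\]
and each containment is strict because the subquotients, as we shall see, are nonzero.

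Second, I would identify the successive quotients. By the third isomorphism theorem,
\[
(\Psi_{\mathfrak{hv}}(\lambda,\beta)/R_g^{\mathfrak{hv}})/R_1^{\mathfrak{hv}} \;\cong\; \Psi_{\mathfrak{hv}}(\lambda,\beta)/R_{g_1}^{\mathfrak{hv}}
\quad\text{and}\quad
R_i^{\mathfrak{hv}}/R_{i+1}^{\mathfrak{hv}} \;\cong\; R_{g_i}^{\mathfrak{hv}}/R_{g_{i+1}}^{\mathfrak{hv}}
\]
for $1\le i\le n-1$. Noting that $\Psi_{\mathfrak{hv}}(\lambda,\beta)=R_{g_0}^{\mathfrak{hv}}$ with $g_0(y)=1$, both relations are of the form $R_{g_i}^{\mathfrak{hv}}/R_{g_{i+1}}^{\mathfrak{hv}}$ with $g_{i+1}(y)=(y-b_{i+1})g_i(y)$. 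Applying Proposition~\ref{lemm5.2} with the pair $(g,g^\prime)=(g_{i+1},g_i)$ and $b=b_{i+1}$ identifies each factor with $\Phi_{\mathfrak{hv}}(\lambda,\beta,b_{i+1})$, completing the identification.

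There is no substantive obstacle: everything reduces to the already established Propositions~\ref{le5.1} and~\ref{lemm5.2} plus the standard isomorphism theorem. The only mild subtlety worth flagging is that, when $\beta(b_{i+1})=0$ and $b_{i+1}=0$ occur simultaneously, Proposition~\ref{le5.1} shows that $\Phi_{\mathfrak{hv}}(\lambda,\beta,b_{i+1})$ is itself reducible (it contains the proper submodule $S_{g_i}^{\mathfrak{hv}}/R_{g_{i+1}}^{\mathfrak{hv}}$ corresponding to $x\mathbb{C}[x]$); this does not break the corollary's statement, which only asserts that the chain is a decomposition series with the displayed factors, but it is worth pointing out explicitly to avoid misreading ``decomposition'' as ``composition''.
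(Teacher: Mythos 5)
Your proposal is correct and follows essentially the same route as the paper: the third isomorphism theorem identifies the successive quotients with $R_{g_i}^{\mathfrak{hv}}/R_{g_{i+1}}^{\mathfrak{hv}}$, and Proposition \ref{lemm5.2} then identifies each of these with $\Phi_{\mathfrak{hv}}(\lambda,\beta,b_{i+1})$, with the chain of inclusions coming from Proposition \ref{le5.1}. Your added remark about the degenerate case $\beta(b_{i+1})=b_{i+1}=0$ is a sensible clarification but does not change the argument.
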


\begin{proof}
It can be directly  checked  by Proposition  \ref{lemm5.2} and the following isomorphisms
$$(\Psi_\mathfrak{hv}(\lambda, \beta)/R_g^\mathfrak{hv})/(R_{g_1}^\mathfrak{hv}/R^\mathfrak{hv}_g) \cong \Psi_\mathfrak{hv}(\lambda, \beta)/R_{g_1}^\mathfrak{hv}$$ and 
$$(R_{g_i}^\mathfrak{hv}/R_g^\mathfrak{hv})/(R_{g_{i+1}}^\mathfrak{hv}/R_g^\mathfrak{hv}) \cong R_{g_i}^\mathfrak{hv}/R_{g_{i+1}}^\mathfrak{hv},$$ where  $1 \leq i \leq n-1$.
\end{proof}

\subsection{Modules over the Neveu-Schwarz algebra}
 The Neveu-Schwarz algebra and the Ramond algebra are collectively referred to as the super Virasoro algebra (also called N=1 superconformal algebra), which plays  an important role in mathematical physics. 

From the definition of $\mathfrak{g}$-modules $\Omega(\lambda,\beta)$ in Proposition \ref{pro2.21}, we immediately get the module of  $\mathfrak{ns}$  for $\beta(y)=y$.
\begin{defi}\label{pro5.1}
For $\lambda\in \C^*,f(x,y)\in\C[x,y],f(s,t)\in\C[s,t]$,
 the    $\mathfrak{ns}$-actions
  on $V_{\mathfrak{ns}}=\mathbb{C}[x,y]\oplus\mathbb{C}[s,t]$ are given as 
\begin{eqnarray}
 &&\label{eq555.5}  
 L_m f(x,y) = \lambda^m (x+my) f(x+m,y),  
 \\&&L_m f(s,t)= \lambda^m (s+m(t+\frac{1}{2})) f(s+m,t), 
 \\&& G_p f(x,y)= \lambda^{p-\frac{1}{2}} f(s+p,t),
 \\&&
 G_p f(s,t)= \lambda^{p+\frac{1}{2}} (x + 2py) f(x+p,y),
 \\&&\label{eq555.8}
 C_1f(x,y)=C_1f(s,t)=0
\end{eqnarray}
for $m\in\Z,p\in\mathbb{Z}+\frac{1}{2}.$
 Then   $V_\mathfrak{ns}$ is an   $\mathfrak{ns}$-module under the actions  of \eqref{eq555.5}-\eqref{eq555.8},  and   denoted by $\Psi_{\mathfrak{ns}}(\lambda)$.
\end{defi}
\begin{rema}
 It's not difficult to see that  
 the $\mathfrak{ns}$-module  $\Psi_{\mathfrak{ns}}(\lambda)$ can also be obtained from \cite{YYX3} by the relation  of   N=1  and  N=2 superconformal algebras.
\end{rema}

\begin{prop}\label{lem5.5}
Let $\lambda\in\C^*$.
For  any $g(y)\in\mathbb{C}[y]$, we let $R_g^\mathfrak{ns}=g(y)\mathbb{C}[x,y]\oplus g(t)\mathbb{C}[s,t]\subseteq\Psi_{\mathfrak{ns}}(\lambda) $  and
$S_g^\mathfrak{ns}=g(y)(x\mathbb{C}[x,y]+y\mathbb{C}[x,y])\oplus g(t)\mathbb{C}[s,t] \subseteq\Psi_{\mathfrak{ns}}(\lambda)$. Then the following statements hold.
\begin{itemize}
\item[\rm(1)] The set $\big\{R_g^\mathfrak{ns}, S_g^\mathfrak{ns} \mid g(y)\in \mathbb{C}[y]\big\}$ exhausts all $\mathfrak{ns}$-submodules of $\Psi_{\mathfrak{ns}}(\lambda)$. Moreover, $S_g^\mathfrak{ns} \subseteq R_g^\mathfrak{ns}\subseteq  R_{\hat{g}}^\mathfrak{ns}$  for
any $g(y), \hat{g}(y)\in \mathbb{C} [y]$  with $\hat{g}(y) \mid g(y)$.
\item[\rm(2)] For any nonzero $g(y) \in \mathbb{C}[y]$, the quotient $\Psi_{\mathfrak{ns}}(\lambda)/R_g^\mathfrak{ns}$ is a free $\mathbb{C}[L_0]$-module of rank $2 \mathrm{deg}g(y)$.  Any maximal submodule of $\Psi_{\mathfrak{ns}}(\lambda)$ is of the form $R_g^\mathfrak{ns}$ for some $g(y)\in\mathbb{C}[y]$ with $\mathrm{deg}g(y)=1$.  
\end{itemize}
  \end{prop}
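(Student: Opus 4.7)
The plan is to mirror the proof of Theorem \ref{le4.366} in the case $\beta(y)=y$ (so $\beta_0=0$), working entirely within the subalgebra $\mathfrak{ns}$. The main obstacle is that neither $H_m$ nor $Q_p$ is available, so both of them must be replaced by carefully chosen compositions of $L_m$ and $G_p$.

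Let $F=F_{\bar0}\oplus F_{\bar1}$ be a nonzero submodule of $\Psi_{\mathfrak{ns}}(\lambda)$; applying $G_{\frac12}$ across parities shows $F_{\bar0}\ne0\iff F_{\bar1}\ne0$. To establish the analogue of Claim \ref{claim1}, namely that any $f(x,y)=\sum_{i=0}^k x^if_i(y)\in F_{\bar0}$ satisfies $f_0(y), xf_i(y)\in F_{\bar0}$ for $1\le i\le k$, I would replace the $H_m$-based operator used there by $\lambda^{-m}(mG_{\frac12}G_{m-\frac12}-L_m)$. A direct calculation with the defining relations gives
\begin{equation*}
\lambda^{-m}\bigl(mG_{\frac12}G_{m-\frac12}-L_m\bigr)f(x,y)=(m-1)\,x\sum_{i=0}^k(x+m)^if_i(y)\in F_{\bar0},\quad m\in\mathbb{Z}.
\end{equation*}
Extracting the coefficient of $m^{k+1}$ via Proposition \ref{pro4.1} yields $xf_k(y)\in F_{\bar0}$; applying powers of $L_0$ (which acts as multiplication by $x$) and inducting downward on $k$ completes the analogue of the claim.

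Next, selecting nonzero $h(y),\,xg(y)\in F_{\bar0}$ of minimal $y$-degree, the relation $L_0h(y)=xh(y)\in F_{\bar0}$ forces $g(y)\mid h(y)$. The production of $yg(y)$ previously used $H_m$; here I would apply $L_m$ to $xg(y)$:
\begin{equation*}
\lambda^{-m}L_m\bigl(xg(y)\bigr)=\bigl(x^2+m(x+xy)+m^2y\bigr)g(y)\in F_{\bar0},
\end{equation*}
so the coefficient of $m^2$ gives $yg(y)\in F_{\bar0}$ and hence $h(y)\mid yg(y)$. The two divisibilities force the dichotomy $h(y)=c_1g(y)$ or $h(y)=c_2yg(y)$, leading respectively to $F_{\bar0}=g(y)\mathbb{C}[x,y]$ (Case 1) or $F_{\bar0}=g(y)\bigl(x\mathbb{C}[x,y]+y\mathbb{C}[x,y]\bigr)$ (Case 2).

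The hardest step will be passing from $F_{\bar0}$ to $F_{\bar1}$ without $Q_p$: only $G_p$ is at our disposal. For $r(s,t)\in F_{\bar1}$, I would use
\begin{equation*}
\lambda^{-p-\frac12}G_p\,r(s-p,t)=(x+2py)\,r(x,y)\in F_{\bar0},\quad p\in\mathbb{Z}+\tfrac12,
\end{equation*}
and Proposition \ref{pro4.1} (after the substitution $p=m-\frac12$) to deduce $xr(x,y),\,yr(x,y)\in F_{\bar0}$. In Case 1, $xr(x,y)\in g(y)\mathbb{C}[x,y]$ forces $g(y)$ to divide every $x$-coefficient of $r(x,y)$, giving $r(s,t)\in g(t)\mathbb{C}[s,t]$. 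In Case 2, writing $xr(x,y)=xg(y)A(x,y)+yg(y)B(x,y)$ and setting $x=0$ forces $B(0,y)=0$, hence $x\mid B$, and the same conclusion follows. The reverse inclusion $g(t)\mathbb{C}[s,t]\subseteq F_{\bar1}$ comes from $G_{\frac12}\bigl(g(y)u(x-\tfrac12,y)\bigr)=g(t)u(s,t)$ in Case 1, and from the telescoping identity
\begin{equation*}
\lambda^{-1}G_{\frac32}\bigl(g(y)x\,u(x-\tfrac32,y)\bigr)-G_{\frac12}\bigl(g(y)x\,u(x-\tfrac12,y)\bigr)=g(t)u(s,t)
\end{equation*}
in Case 2, both arguments lying in $F_{\bar0}$. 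Part (2) is then immediate from (1): each summand of $\Psi_{\mathfrak{ns}}(\lambda)/R_g^{\mathfrak{ns}}$ is a free $\mathbb{C}[L_0]$-module of rank $\deg g(y)$ (with $L_0$ acting as multiplication by $x$ and $s$ respectively), and since $S_g^{\mathfrak{ns}}\subsetneq R_g^{\mathfrak{ns}}$ and $R_g^{\mathfrak{ns}}\subsetneq R_{\hat g}^{\mathfrak{ns}}$ whenever $\hat g$ properly divides $g$, only the $R_{y-b}^{\mathfrak{ns}}$ can be maximal.
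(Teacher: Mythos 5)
Your proposal is correct and follows essentially the same route as the paper: the operator $\lambda^{-m}(mG_{\frac12}G_{m-\frac12}-L_m)$ for the coefficient claim, extraction of $yg(y)$ from $\lambda^{-m}L_m(xg(y))$ via Proposition \ref{pro4.1}, and the passage between parities using $(x+2py)r(x,y)=\lambda^{-p-\frac12}G_p r(s-p,t)$ together with the telescoping $G_{\frac32}$/$G_{\frac12}$ identity are exactly the steps in the paper's proof. The only difference is that you spell out a few details (e.g.\ the divisibility argument in Case 2) that the paper leaves implicit.
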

 \begin{proof}
 {\rm (1)} Suppose that   $F^\mathfrak{ns}=
F^\mathfrak{ns}_{\bar0}\oplus F^\mathfrak{ns}_{\bar1}$ is a nonzero submodule of $\Psi_{\mathfrak{ns}}(\lambda)=\mathbb{C}[x,y]\oplus \mathbb{C}[s,t]$. Let $f(x, y)=\sum_{i=0}^kx^if_i(y)$ be a nonzero element in  $\mathbb{C} [x,y]$. By the similar computation in  Claim \ref{claim1}, we know that 
      $f(x,y)=\sum^k_{i=0}x^i
f_i(y)\in F^\mathfrak{ns}_{\bar0}$
if and only if 
   $f_0(y), xf_i(y)\in F^\mathfrak{ns}_{\bar0}$ for $i= 1,\ldots,k$.

Let $h(y),xg(y)\in F^\mathfrak{ns}_{\bar0}$ be nonzero polynomials such that $\mathrm{deg}_y(h(y)),\mathrm{deg}_{y}(xg(y))$ are minimal.
Then for any
$f(x,y)=\sum_k^{i=0}
x_i
f_i(y) \in F^\mathfrak{ns}_{\bar0}$, we have $h(y)\mid f_0(y)$ and $g(y)\mid f_i(y)$  for $i=1,\ldots,k$.

From $xh(y) = L_0h(y) \in F^\mathfrak{ns}_{\bar0}$, we have $g(y)\mid h(y)$. Since 
$$(x+my) (x+m)g(y)= \lambda^{-m}L_m(xg(y)) \in  F^\mathfrak{ns}_{\bar0},$$  by using Proposition \ref{pro4.1}, we have $yg(y)\in  F^\mathfrak{ns}_{\bar0}$. Thus $h(y)
\mid yg(y)$. Therefore, $h(y) = c_1g(y)$  or $h(y) = c_2yg(y)$  for
some nonzero $c_1, c_2 \in\mathbb{C}$. Now we give the following discussion.
\begin{case}
   $h(y)=c_1g(y)$  for some 
   $c_1\in \mathbb{C}^*$. 
\end{case}
In this case, $F^\mathfrak{ns}_{\bar0}$ is generated by $g(y)$, namely, $F^\mathfrak{ns}_{\bar0} = g(y)\mathbb{C}[x,y]$.   For any $u(s,t) \in \mathbb{C}[s,t]$, we have $g(t)u(s,t)=G_{\frac{1}{2}}g(y)u(x-\frac{1}{2},y)\in F^\mathfrak{ns}_{\bar1}$.
Hence, $g(t)\mathbb{C}[s,t]\in F^\mathfrak{ns}_{\bar1}$. On the other hand,  for any $p\neq \frac{1}{2}$ and  $r(s,t)\in F^\mathfrak{ns}_{\bar1}$, it is easy to check that 
\begin{eqnarray*}
    xr(x,y)&=&\frac{2p}{1-2p}\big(\frac{1}{2p}\lambda^{-p-\frac{1}{2}} G_{p}r(s-p,t)-\lambda^{-1} G_{\frac{1}{2}}r(s-\frac{1}{2},t)\big)
    \\&&\in F^\mathfrak{ns}_{\bar0}=g(y)\mathbb{C}[x,y],
    \end{eqnarray*}
which  shows that $r(s,t)\in g(t)\mathbb{C}[s,t],$ namely, $F^\mathfrak{ns}_{\bar1}\subseteq g(t)\mathbb{C}[s,t].$
Thus, $F^\mathfrak{ns}=R_g^\mathfrak{ns}$.
\begin{case}
   $h(y)=c_2yg(y)$  for some 
   $c_2\in \mathbb{C}^*$. 
\end{case}
In this case, $F^\mathfrak{ns}_{\bar0}$ is generated by $yg(y)$ and $xg(y)$, namely, $F^\mathfrak{ns}_{\bar0}=g(y)(x\mathbb{C}[x,y]+y\mathbb{C}[x,y])$. For any $u(s,t)\in \mathbb{C}[s,t]$,  we get
\begin{eqnarray*}
    g(t)u(s,t)=\lambda^{-1}G_{\frac{3}{2}}g(y)xu(x-\frac{3}{2},y)-G_{\frac{1}{2}}g(y)xu(x-\frac{1}{2},y)\in F^\mathfrak{ns}_{\bar1}.
\end{eqnarray*}
Hence, $g(t)\mathbb{C}[s,t]\subseteq F^\mathfrak{ns}_{\bar1}$. On the other hand,   for any $p\neq\frac{1}{2}$ and $r(s,t)\in F^\mathfrak{ns}_{\bar1}$, we have
\begin{eqnarray*}
    xr(x,y)&=&\frac{2p}{1-2p}\big(\frac{1}{2p}\lambda^{-p-\frac{1}{2}} G_{p}r(s-p,t)-\lambda^{-1} G_{\frac{1}{2}}r(s-\frac{1}{2},t)\big)
    \\&&\in F^\mathfrak{ns}_{\bar0}=g(y)(x\mathbb{C}[x,y]+y\mathbb{C}[x,y]),
    \end{eqnarray*}
which gives  $r(s,t)\in g(t)\mathbb{C}[s,t],$ namely, $F^\mathfrak{ns}_{\bar1}\subseteq g(t)\mathbb{C}[s,t].$
Consequently, $F^\mathfrak{ns}_{\bar1}= g(t)
\mathbb{C}[s,t]$ and $F^\mathfrak{ns}=S_g^\mathfrak{ns}.$

{\rm(2)} We can directly  check this result from (1).
  \end{proof}

Based  on  Proposition \ref{lem5.5}, any irreducible quotient of the $\mathfrak{ns}$-module $\Psi_{\mathfrak{ns}}(\lambda)$ is of the form $\Psi_{\mathfrak{ns}}(\lambda)/R_g^\mathfrak{ns}$ for some
$g(y)=y-b$ with $b\in\mathbb{C}^*$, which is denoted by $\Phi_\mathfrak{ns}(\lambda,   b)$. Then as a $\mathbb{Z}_2$-graded vector space, $\Phi_\mathfrak{ns}(\lambda,   b)=\mathbb{C} [x]\oplus\mathbb{C} [s]$ with $\Phi_\mathfrak{ns}(\lambda,   b)_{\bar0} = \mathbb{C} [x]$ and $\Phi_\mathfrak{ns}(\lambda,   b)_{\bar1} = \mathbb{C}[s]$. It follows from Proposition \ref{lem5.5} and \eqref{eq555.5}-\eqref{eq555.8}
that the $\mathfrak{ns}$-module structure of $\Phi_\mathfrak{ns}(\lambda,   b)$ is given by
\begin{eqnarray*}
 &&\label{eq5.12} L_m f(x) = \lambda^m (x+mb) f(x+m),  
 \\&&L_m f(s)= \lambda^m (s+m(b+\frac{1}{2})) f(s+m), 
 \\&& G_p f(x)= \lambda^{p-\frac{1}{2}} f(s+p),
 \\&&
 \label{eq5.15}G_p f(s)= \lambda^{p+\frac{1}{2}} (x + 2pb) f(x+p),
 \\&&C_1f(s)=C_1f(x)=0
\end{eqnarray*}
for $m\in\Z,p\in\mathbb{Z}+\frac{1}{2}.$ It is clear that $\mathfrak{ns}$-module $\Phi_{\mathfrak{ns}}(\lambda,  b)$ is irreducible if and only if  $b\neq0$ (see \cite{YYX1}). From Proposition \ref{lem5.5}, we see that $S_g^\mathfrak{ns}=x\mathbb{C}[x]\oplus\mathbb{C}[s]$
is a submodule if $b=0$.

  \begin{prop}\label{lemma5.666}
    Keep notations as in Proposition \ref{lem5.5}. 
    Let $\lambda\in \mathbb{C}^*,b\in\mathbb{C}$ and assume $g(y), g^\prime(y)\in\mathbb{C}[y]$ with
$g(y)=(y-b)g^\prime(y)$. Then as an   $\mathfrak{ns}$-module $R_{g^\prime}^{\mathfrak{ns}}/R_g^\mathfrak{ns} \cong \Phi_\mathfrak{ns}(\lambda,   b)$.  In particular, if $b\neq0$, the quotient module is irreducible.
\end{prop}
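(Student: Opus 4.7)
The plan is to mirror the proofs of Lemma \ref{lemm4.4} and Proposition \ref{lemm5.2}.

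First I would identify the quotient as a $\mathbb{Z}_2$-graded vector space. Since $R_g^\mathfrak{ns}=(y-b)g'(y)\mathbb{C}[x,y]\oplus(t-b)g'(t)\mathbb{C}[s,t]$ and $R_{g'}^\mathfrak{ns}=g'(y)\mathbb{C}[x,y]\oplus g'(t)\mathbb{C}[s,t]$, the Taylor expansion $f(x,y)=f(x,b)+(y-b)\widetilde{f}(x,y)$ shows that every element $g'(y)f(x,y)$ of the even part is congruent modulo $R_g^\mathfrak{ns}$ to $g'(y)f(x,b)$, so
\[
R_{g'}^\mathfrak{ns}/R_g^\mathfrak{ns}=g'(y)\mathbb{C}[x]\oplus g'(t)\mathbb{C}[s]
\]
as $\mathbb{Z}_2$-graded vector spaces, with the two summands having a $\mathbb{C}$-basis naturally indexed by $\mathbb{C}[x]$ and $\mathbb{C}[s]$ respectively.

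Next I would define the candidate $\mathfrak{ns}$-module map
\[
\psi:\Phi_\mathfrak{ns}(\lambda,b)\longrightarrow R_{g'}^\mathfrak{ns}/R_g^\mathfrak{ns},\qquad f(x)\longmapsto g'(y)f(x),\ \ f(s)\longmapsto g'(t)f(s),
\]
which by the identification above is manifestly a $\mathbb{Z}_2$-graded linear bijection. It remains to check that $\psi$ intertwines the actions of the generators $L_m$ and $G_p$.

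The verification reduces to the following observation: in $R_{g'}^\mathfrak{ns}/R_g^\mathfrak{ns}$ we have $yg'(y)\equiv bg'(y)$ and $tg'(t)\equiv bg'(t)$, so multiplication by $y$ (resp.\ $t$) on the quotient coincides with multiplication by the scalar $b$. Applying formulas \eqref{eq555.5}--\eqref{eq555.8} to $g'(y)f(x)$ and $g'(t)f(s)$ and then reducing modulo $R_g^\mathfrak{ns}$, the factor $(x+my)$ becomes $(x+mb)$, the factor $(s+m(t+\tfrac12))$ becomes $(s+m(b+\tfrac12))$, and the factor $(x+2py)$ becomes $(x+2pb)$, which are exactly the coefficients appearing in the $\mathfrak{ns}$-action on $\Phi_\mathfrak{ns}(\lambda,b)$. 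Thus $L_m\psi(f(x))=\psi(L_m f(x))$ and similarly for $L_m$ acting on the odd part, for $G_p$ acting on the even part (which uses the parity-flip $f(x)\mapsto f(s+p)$), and for $G_p$ acting on the odd part.

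There is no real obstacle here; the argument is entirely routine. The only point requiring care is bookkeeping: one must keep straight that $G_p$ exchanges the even and odd components, so the intertwining check for $G_p$ involves pushing $g'(y)$ to $g'(t)$ and vice versa, which is automatic because $G_p$ acts on $\Omega(\lambda,\beta)$ (and hence on $\Psi_\mathfrak{ns}(\lambda)$) by a formula that transports polynomials in $y$ to polynomials in $t$ of the same shape.
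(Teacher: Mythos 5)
Your proposal is correct and follows essentially the same route as the paper: the paper also identifies the quotient as $g'(y)\mathbb{C}[x]\oplus g'(t)\mathbb{C}[s]$ and defines exactly the same map $\psi$, leaving the intertwining check as "easy to verify." Your explicit observation that multiplication by $y$ (resp.\ $t$) reduces to multiplication by the scalar $b$ modulo $R_g^{\mathfrak{ns}}$ is precisely the detail the paper omits, so your write-up is simply a fuller version of the same argument.
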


\begin{proof}
    As a $\mathbb{Z}_2$-graded vector space,
$$R_{g^\prime}^{\mathfrak{ns}}/R_g^\mathfrak{ns} = (R_{g^\prime}^{\mathfrak{ns}}/R_g^\mathfrak{ns})_{\bar0}\oplus (R_{g^\prime}^{\mathfrak{ns}}/R_g^\mathfrak{ns})_{\bar1}=g^\prime(y)
\mathbb{C}[x]\oplus g^\prime(t)\mathbb{C}[s].$$
Define the following linear mapping
  \begin{eqnarray*}
\psi: \Phi_{\mathfrak{ns}}(\lambda,b)&\longrightarrow& R_{g^\prime}^{\mathfrak{ns}}/R_g^\mathfrak{ns}
\\ f(x)&\longmapsto&g^\prime(y)f(x)
\\  f(s)&\longmapsto& g^\prime(t)f(s).
\end{eqnarray*}
It is easy to    check that $\psi$ is an $\mathfrak{ns}$-module isomorphism. The proposition clears.
\end{proof}
\begin{coro}
    Keep notations as in Proposition \ref{lem5.5}. Let  $\lambda,b_i\in \mathbb{C}^*$, and  let $g(y)$ be a monic polynomial of degree $n$ for $i=1,\ldots,n$. Suppose that  $b_1, \ldots, b_n$ are all roots of $g(y)$ in $\mathbb{C}^*$ (counting the multiplicity). Set $g_i(y) = (y - b_1)(y - b_2) \cdots (y -b_i)$ and $R_i^\mathfrak{ns} = R_{g_i}^\mathfrak{ns}/R_g^\mathfrak{ns}$ for $i = 1, \ldots, n$. Then
\[
\Psi(\lambda)/R_g^\mathfrak{ns} \supset R_1^\mathfrak{ns} \supset R_2^\mathfrak{ns} \supset \cdots \supset R_{n-1}^\mathfrak{ns} \supset R_n^\mathfrak{ns} = 0
\]
is a decomposition series of the quotient module $\Psi(\lambda)/R_g^\mathfrak{ns}$. Moreover, we have the irreducible quotient modules
\[
(\Psi(\lambda)/R_g^\mathfrak{ns})/R_1^{\mathfrak{ns}} \cong \Phi_{\mathfrak{ns}}(\lambda,  b_1), \quad R_i^{\mathfrak{ns}}/R_{i+1}^{\mathfrak{ns}} \cong \Phi_{\mathfrak{ns}}(\lambda,  b_{i+1}), \quad 1 \leq i \leq n-1.
\]
\end{coro}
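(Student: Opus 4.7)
The plan is to derive this corollary as a direct consequence of Proposition \ref{lemma5.666} combined with the standard third isomorphism theorem for modules. First I would verify the chain of strict inclusions. By construction $g_{i+1}(y) = (y - b_{i+1}) g_i(y)$, hence $g_i(y) \mid g_{i+1}(y) \mid g(y)$ for $1 \le i \le n-1$. Proposition \ref{lem5.5}(1) then yields $R_g^{\mathfrak{ns}} \subseteq R_{g_{i+1}}^{\mathfrak{ns}} \subseteq R_{g_i}^{\mathfrak{ns}}$, and each inclusion is strict since the generating polynomials of the two sides differ in degree. Passing to the quotient by $R_g^{\mathfrak{ns}}$, and noting that $g_n = g$ forces $R_n^{\mathfrak{ns}} = 0$, produces the advertised descending chain.

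Second, I would identify the successive subquotients. The third isomorphism theorem gives
\[
(\Psi_{\mathfrak{ns}}(\lambda)/R_g^{\mathfrak{ns}})/R_1^{\mathfrak{ns}} \cong \Psi_{\mathfrak{ns}}(\lambda)/R_{g_1}^{\mathfrak{ns}}, \qquad R_i^{\mathfrak{ns}}/R_{i+1}^{\mathfrak{ns}} \cong R_{g_i}^{\mathfrak{ns}}/R_{g_{i+1}}^{\mathfrak{ns}}.
\]
For the top factor, apply Proposition \ref{lemma5.666} with $g = g_1 = y - b_1$ and $g' = 1$ (so that $R_{g'}^{\mathfrak{ns}} = \Psi_{\mathfrak{ns}}(\lambda)$), yielding $\Psi_{\mathfrak{ns}}(\lambda)/R_{g_1}^{\mathfrak{ns}} \cong \Phi_{\mathfrak{ns}}(\lambda, b_1)$. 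For the intermediate factors, apply the same proposition with $g = g_{i+1}$, $g' = g_i$, $b = b_{i+1}$, which gives $R_{g_i}^{\mathfrak{ns}}/R_{g_{i+1}}^{\mathfrak{ns}} \cong \Phi_{\mathfrak{ns}}(\lambda, b_{i+1})$, exactly as required.

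There is essentially no real obstacle here; the entire proof is a matter of bookkeeping with indices and telescoping the isomorphism of Proposition \ref{lemma5.666} along the filtration. The one subtlety worth flagging is that $\Phi_{\mathfrak{ns}}(\lambda, b_i)$ is irreducible precisely when $b_i \neq 0$, so when some root $b_i$ vanishes the displayed chain is not yet a composition series in the classical sense; in that case it admits a further refinement at the offending step by inserting the additional submodule $S_{g_i}^{\mathfrak{ns}}/R_g^{\mathfrak{ns}}$ furnished by Proposition \ref{lem5.5}. For the statement as given, however, this refinement is not needed and the argument terminates with the two isomorphism identifications above.
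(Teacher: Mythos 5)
Your argument is correct and is essentially the paper's own proof: the paper likewise combines the third isomorphism theorem with the identification $R_{g_i}^{\mathfrak{ns}}/R_{g_{i+1}}^{\mathfrak{ns}} \cong \Phi_{\mathfrak{ns}}(\lambda, b_{i+1})$ from Proposition \ref{lemma5.666}. Your additional remark that the chain fails to be a genuine composition series when some $b_i = 0$ (where $\Phi_{\mathfrak{ns}}(\lambda,0)$ is reducible) is a valid observation that the paper does not make explicit.
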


\begin{proof}
It follows   from Proposition \ref{lem5.5} and the following isomorphisms
$(\Psi_{\mathfrak{ns}}(\lambda)/R_g^\mathfrak{ns})/(R_{g_1}^{\mathfrak{ns}}/R_g^\mathfrak{ns}) \cong \Psi_{\mathfrak{ns}}(\lambda)/R_{g_1}^{\mathfrak{ns}}$ and
$(R_{g_i}^{\mathfrak{ns}}/R_g^\mathfrak{ns})/(R_{g_{i+1}}^{\mathfrak{ns}}/R_g^\mathfrak{ns}) \cong R_{g_i}^{\mathfrak{ns}}/R_{g_{i+1}}^{\mathfrak{ns}}$   for  $1 \leq i \leq n-1.$
\end{proof}

\subsection{Modules over  the Fermion-Virasoro algebra}
We first recall a class of non-weight modules defined  in \cite{XZ}, which are closely associated with the classification of $U(\mathbb{C}L_0)$-free modules of rank 2 over $\mathfrak{fv}$.
Assume that $\lambda\in\mathbb{C}^*,\beta(t)\in\mathbb{C}[t],b\in\mathbb{C}$. Let $\Phi_{\mathfrak{fv}}(\lambda,
\beta,b)$
be a polynomial algebra with the indeterminate elements  $x,s$. For any $f (x) \in \mathbb{C}[x],f (s) \in \mathbb{C}[s]$, we define the following nontrivial $\mathfrak{fv}$-actions   on  $\Phi_{\mathfrak{fv}}(\lambda,
\beta,b)$ as follows:
\begin{eqnarray*}
 &&  
 L_m f(x) = \lambda^m (x+m\beta(b)) f(x+m), 
 \\&&L_m f(s)= \lambda^m (s+m(\beta(b)+\frac{1}{2})) f(s+m), 
 \\&&
 Q_p f(s)= \lambda^{p+\frac{1}{2}} bf(x+p)
\end{eqnarray*}
for $m\in\Z,p\in\mathbb{Z}+\frac{1}{2}.$
From \cite{XZ}, we know that $\Phi_{\mathfrak{fv}}(\lambda,
\beta,b)$ is a reducible module.

Note that $\mathfrak{fv}$ is a subalgebra of $\mathfrak{g}$.
By the definition of $\mathfrak{g}$-modules $\Omega(\lambda,\beta)$ in Proposition \ref{pro2.21}, we  get a class of   modules over $\mathfrak{fv}$ as follows.
\begin{defi}
\label{pro5.8}
For $\lambda\in \C^*,f(x,y)\in\C[x,y],f(s,t)\in\C[s,t]$,
 the   $\mathfrak{fv}$-actions
  on $V_{\mathfrak{fv}}=\mathbb{C}[x,y]\oplus\mathbb{C}[s,t]$ are given by
\begin{eqnarray}
 &&  \label{5.161616}
 L_m f(x,y) = \lambda^m (x+m\beta(y)) f(x+m,y),  
 \\&&L_m f(s,t)= \lambda^m (s+m(\beta(t)+\frac{1}{2})) f(s+m,t), 
  \\&& Q_p f(x,y)=C_if(x,y)=C_if(s,t)=0,
 \\&&\label{5.19}
 Q_p f(s,t)= \lambda^{p+\frac{1}{2}} yf(x+p,y)
\end{eqnarray}
for $m\in\Z,p\in\mathbb{Z}+\frac{1}{2},i=1,3.$
 Then we see that  $V_{\mathfrak{fv}}$ is an    $\mathfrak{fv}$-module under the actions  of \eqref{5.161616}-\eqref{5.19},  and   denoted by $\Psi_{\mathfrak{fv}}(\lambda,
\beta)$.
\end{defi}

\begin{prop}\label{lemm5.9999}
\begin{itemize}
\item[\rm(1)] For $\alpha\in\mathbb{C}$,  then
 $\Psi_{\mathfrak{fv}}(\lambda,
\beta)$   has the following submodule filtration 
\begin{eqnarray*}  
&&\cdots\subseteq (y-\alpha)^n\mathbb{C}[x,y] \oplus (t-\alpha)^n\mathbb{C}[s,t] 
\subseteq(y-\alpha)^{n-1}\mathbb{C}[x,y] 
\oplus (t-\alpha)^{n-1}\mathbb{C}[s,t]
\\&&
\subseteq \cdots \subseteq\mathbb{C}[x,y] \oplus  \mathbb{C}[s,t];
\end{eqnarray*}
\item[\rm(2)] 
Let $b\in\mathbb{C}$ and   $g(y), g^\prime(y)\in\mathbb{C}[y]$ with
$g(y)=(y-b)g^\prime(y)$,  and let $M_g^{\mathfrak{fv}}=g(y)\mathbb{C}[x,y]\oplus g(t)\mathbb{C}[s,t]\subseteq\Psi_{\mathfrak{hv}}(\lambda,\beta) $. Then as an  $\mathfrak{fv}$-module $M_{g^\prime}^{\mathfrak{fv}}/M_g^{\mathfrak{fv}} \cong \Phi_{\mathfrak{fv}}(\lambda,\beta,b)$.
\end{itemize}
\end{prop}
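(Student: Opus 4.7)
The plan is to prove both parts by direct verification from the $\mathfrak{fv}$-action formulas in Definition \ref{pro5.8}, in the same spirit as the isomorphism arguments of Lemma \ref{lemm4.4} and Proposition \ref{lemm5.2}. The strategy is to exploit the fact that in $\Psi_{\mathfrak{fv}}(\lambda,\beta)$ the ``parameter'' variables $y$ and $t$ behave purely as scalars under every action, so that polynomial factors in $y$ or $t$ simply pass through.

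For part (1), the key observation is that in each of the four action formulas of Definition \ref{pro5.8} the variables $y$ (on the even component) and $t$ (on the odd component) are carried along untouched, with only $x$ or $s$ being shifted by an integer. Moreover, the unique cross action $Q_p f(s,t)=\lambda^{p+\frac{1}{2}} y f(x+p,y)$ substitutes $t$ by $y$, hence sends $(t-\alpha)^n\mathbb{C}[s,t]$ into $(y-\alpha)^n\mathbb{C}[x,y]$. Combining these facts, each set $(y-\alpha)^n\mathbb{C}[x,y]\oplus(t-\alpha)^n\mathbb{C}[s,t]$ is a $\mathbb{Z}_2$-graded $\mathfrak{fv}$-submodule of $\Psi_{\mathfrak{fv}}(\lambda,\beta)$, and the inclusion chain is tautological from $(y-\alpha)^n\mid(y-\alpha)^{n-1}$.

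For part (2), I would define the linear map
$$\psi:\Phi_{\mathfrak{fv}}(\lambda,\beta,b)\longrightarrow M_{g'}^{\mathfrak{fv}}/M_g^{\mathfrak{fv}},\quad f(x)\longmapsto g'(y)f(x)+M_g^{\mathfrak{fv}},\quad f(s)\longmapsto g'(t)f(s)+M_g^{\mathfrak{fv}}.$$
Bijectivity will come from the relation $g(y)=(y-b)g'(y)\equiv 0$ in the quotient: Euclidean division gives $h(x,y)=h(x,b)+(y-b)q(x,y)$, hence $g'(y)h(x,y)\equiv g'(y)h(x,b)\pmod{M_g^{\mathfrak{fv}}}$, showing that every residue class in the even part has a unique representative of the form $g'(y)f(x)$ with $f(x)\in\mathbb{C}[x]$ (and similarly $g'(t)f(s)$ on the odd part). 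For $\mathfrak{fv}$-equivariance, I would use the congruences $yg'(y)\equiv bg'(y)$ and therefore $\beta(y)g'(y)\equiv\beta(b)g'(y)$ modulo $g(y)$, the latter being valid because $\beta(y)-\beta(b)\in(y-b)\mathbb{C}[y]$; comparing with the action formulas for $\Phi_{\mathfrak{fv}}(\lambda,\beta,b)$ recalled just before the proposition then shows that $\psi$ intertwines every $L_m$ and $Q_p$.

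Neither part presents a genuine obstacle; the whole argument reduces to bookkeeping about which variable each operator shifts and which it merely substitutes. The one mildly delicate point is the polynomial identity $\beta(y)-\beta(b)\in(y-b)\mathbb{C}[y]$, which is what allows the $L_m$-action on $\Phi_{\mathfrak{fv}}(\lambda,\beta,b)$ (with scalar coefficient $\beta(b)$) to match the $L_m$-action on $g'(y)\mathbb{C}[x,y]$ (with polynomial coefficient $\beta(y)$) after reduction modulo $M_g^{\mathfrak{fv}}$. Everything else is direct substitution.
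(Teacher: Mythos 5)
Your proposal is correct and follows essentially the same route as the paper: part (1) is a direct check that each $(y-\alpha)^i\mathbb{C}[x,y]\oplus(t-\alpha)^i\mathbb{C}[s,t]$ is invariant under the actions of Definition \ref{pro5.8}, and part (2) uses exactly the map $\psi:f(x)\mapsto g'(y)f(x)$, $f(s)\mapsto g'(t)f(s)$ that the paper writes down. You supply more detail than the paper does (the Euclidean-division argument for bijectivity and the congruence $\beta(y)\equiv\beta(b)\pmod{(y-b)}$ for equivariance), but the underlying argument is identical.
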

\begin{proof}
\begin{itemize}
\item[\rm(1)]
    We only need to show that 
$(y-\alpha)^i\mathbb{C}[x,y] \oplus (t-\alpha)^i\mathbb{C}[s,t]$ for $i=1,2,\ldots,n$ are submodules. But this can be checked directly by the definition of $\mathfrak{fv}$ and Definition \ref{pro5.8}.
\item[\rm(2)]
As a $\mathbb{Z}_2$-graded vector space,
$$M_{g^\prime}^{\mathfrak{fv}}/M_g^{\mathfrak{fv}} = (M_{g^\prime}^{\mathfrak{fv}}/M_g^{\mathfrak{fv}})_{\bar0}\oplus (M_{g^\prime}^{\mathfrak{fv}}/M_g^{\mathfrak{fv}})_{\bar1}=g^\prime(y)
\mathbb{C}[x]\oplus g^\prime(t)\mathbb{C}[s].$$
We define the following linear mapping
  \begin{eqnarray*}
\psi: \Phi_{\mathfrak{fv}}(\lambda,\beta,b)&\longrightarrow& M_{g^\prime}^{\mathfrak{fv}}/M_g^{\mathfrak{fv}}
\\ f(x)&\longmapsto&g^\prime(y)f(x)
\\  f(s)&\longmapsto& g^\prime(t)f(s).
\end{eqnarray*}
It is easy to show that $\psi$ is an $\mathfrak{fv}$-module isomorphism.
\end{itemize}
\end{proof}

\subsection{Modules over  the
Heisenberg-Clifford superalgebra}
 The
Heisenberg-Clifford superalgebra $\mathfrak{hc}$ is a subalgebra of $\mathfrak{g}$.
Let $\lambda\in\mathbb{C}^*$, $b\in\mathbb{C}$,  and  let $\Phi_{\mathfrak{hc}}(\lambda,
b)$
be a polynomial algebra with indeterminate elements  $x,s$. For any $f (x) \in \mathbb{C}[x]$ and $f (s) \in \mathbb{C}[s]$, we define nontivial $\mathfrak{hc}$-actions   on  $\Phi_{\mathfrak{hc}}(\lambda,
b)$ as follows:
\begin{eqnarray*}
 &&
 H_m f(x)= \lambda^m bf(x+m),
 \    H_m f(s)= \lambda^m bf(s+m),
 \   Q_p f(s)= \lambda^{p+\frac{1}{2}} bf(x+p)
\end{eqnarray*}
for $m\in\Z,p\in\mathbb{Z}+\frac{1}{2}.$
Clearly, $\Phi_{\mathfrak{hc}}(\lambda,
b)$ is a reducible $\mathfrak{hc}$-module. 
From the definition of $\mathfrak{g}$-modules $\Omega(\lambda,\beta)$ in Proposition \ref{pro2.21}, we immediately obtain  a class of  modules over  $\mathfrak{hc}$ as follows.  
\begin{defi}
    \label{pro5.2221}
For $\lambda\in \C^*,f(x,y)\in\C[x,y],f(s,t)\in\C[s,t]$,
 the   $\mathfrak{hc}$-actions
  on $V_{\mathfrak{hc}}=\mathbb{C}[x,y]\oplus\mathbb{C}[s,t]$ are given by
\begin{eqnarray}
 &&\label{5.20}  
 H_m f(x,y)= \lambda^m yf(x+m,y),
 \\&& H_m f(s,t)= \lambda^m tf(s+m,t),
  \\&&  Q_p f(x,y)=C_3 f(x,y)=C_3f(s,t)=0,
 \\&&\label{5.23}  Q_p f(s,t)= \lambda^{p+\frac{1}{2}} yf(x+p,y)
\end{eqnarray}
for $m\in\Z,p\in\mathbb{Z}+\frac{1}{2}.$
 Then we see that  $V_\mathfrak{hc}$ is an    $\mathfrak{hc}$-module under the actions  of \eqref{5.20}-\eqref{5.23},  and   denoted by $\Psi_{\mathfrak{hc}}(\lambda)$.
\end{defi}

 \begin{prop}\label{lemma51111}
\begin{itemize}
\item[\rm(1)] For $\alpha\in\mathbb{C}$,  then
 $\Psi_{\mathfrak{hc}}(\lambda)$   has the following submodule filtration 
\begin{eqnarray*}  
&&\cdots\subseteq (y-\alpha)^n\mathbb{C}[x,y] \oplus (t-\alpha)^n\mathbb{C}[s,t] 
\subseteq(y-\alpha)^{n-1}\mathbb{C}[x,y] 
\oplus (t-\alpha)^{n-1}\mathbb{C}[s,t]
\\&&\subseteq \cdots \subseteq\mathbb{C}[x,y] \oplus  \mathbb{C}[s,t];
\end{eqnarray*}
\item[\rm(2)] 
Let $b\in\mathbb{C}$ and   $g(y), g^\prime(y)\in\mathbb{C}[y]$ with
$g(y)=(y-b)g^\prime(y)$,  and let $M_g^{\mathfrak{hc}}=g(y)\mathbb{C}[x,y]\oplus g(t)\mathbb{C}[s,t]\subseteq\Psi_{\mathfrak{hc}}(\lambda) $. Then as an  $\mathfrak{hc}$-module $M_{g^\prime}^{\mathfrak{hc}}/M_g^{\mathfrak{hc}} \cong \Phi_{\mathfrak{hc}}(\lambda,b)$.
\end{itemize}
\end{prop}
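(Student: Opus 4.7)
The plan is to handle the two parts separately, both by direct verification from the action formulas in Definition \ref{pro5.2221}, following exactly the template established in Proposition \ref{lemm5.9999} for $\mathfrak{fv}$.

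For part (1), I would fix $\alpha \in \mathbb{C}$ and $n \geq 1$, and set $N_n = (y-\alpha)^n\mathbb{C}[x,y] \oplus (t-\alpha)^n\mathbb{C}[s,t]$. The even summand is stable under multiplication by $y$ and under the translation $x \mapsto x+m$, so it is stable under $H_m$; similarly $(t-\alpha)^n\mathbb{C}[s,t]$ is stable under $H_m$. The operator $Q_p$ annihilates the even part, so nothing is needed there. On the odd part, $Q_p$ sends $(t-\alpha)^n h(s,t)$ to $\lambda^{p+\frac{1}{2}} y (y-\alpha)^n h(x+p,y)$, which lies in $(y-\alpha)^n\mathbb{C}[x,y] \subseteq N_n$. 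Thus each $N_n$ is an $\mathfrak{hc}$-submodule, and the inclusions $N_{n+1} \subseteq N_n$ are obvious.

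For part (2), I would define the linear map
\begin{align*}
\psi: \Phi_{\mathfrak{hc}}(\lambda, b) &\longrightarrow M_{g'}^{\mathfrak{hc}}/M_g^{\mathfrak{hc}}, \\
f(x) &\longmapsto g'(y) f(x) + M_g^{\mathfrak{hc}}, \\
f(s) &\longmapsto g'(t) f(s) + M_g^{\mathfrak{hc}}.
\end{align*}
Because $g(y) = (y-b)g'(y)$, evaluation at $y = b$ (respectively $t = b$) gives bijections $g'(y)\mathbb{C}[x,y]/g(y)\mathbb{C}[x,y] \cong \mathbb{C}[x]$ and $g'(t)\mathbb{C}[s,t]/g(t)\mathbb{C}[s,t] \cong \mathbb{C}[s]$, so $\psi$ is an isomorphism of $\mathbb{Z}_2$-graded vector spaces. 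Equivariance then reduces to the congruences $yg'(y) \equiv bg'(y)$ modulo $g(y)\mathbb{C}[x,y]$ and $tg'(t) \equiv bg'(t)$ modulo $g(t)\mathbb{C}[s,t]$: these immediately give $H_m(g'(y)f(x)) \equiv \lambda^m b g'(y) f(x+m) = \psi(H_m f(x))$ and $Q_p(g'(t)f(s)) \equiv \lambda^{p+\frac{1}{2}} b g'(y) f(x+p) = \psi(Q_p f(s))$, with all other brackets trivial.

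The only place where care is required is the bookkeeping in part (2): one must keep straight the three roles played by $y$ and $t$, namely as formal indeterminates in the quotient, as eigenvalue-like multipliers, and as the scalar $b$ obtained after reduction modulo $g$. Since $\mathfrak{hc}$ has only commuting generators (all of $[H,H]$, $[H,Q]$, $[Q,Q]$ vanish), no bracket relations between the $H$- and $Q$-actions need to be checked beyond linearity and the two congruences above, so the argument is essentially routine.
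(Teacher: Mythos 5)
Your proposal is correct and follows essentially the same route as the paper, which proves this proposition by invoking the identical argument given for the $\mathfrak{fv}$-case (direct verification that each $(y-\alpha)^n\mathbb{C}[x,y]\oplus(t-\alpha)^n\mathbb{C}[s,t]$ is stable under the $H_m$ and $Q_p$ actions, and the same map $\psi$ with $f(x)\mapsto g^\prime(y)f(x)$, $f(s)\mapsto g^\prime(t)f(s)$ for part (2)). Your write-up simply supplies the routine details, including the key congruence $yg^\prime(y)\equiv bg^\prime(y)\pmod{g(y)\mathbb{C}[x,y]}$, that the paper leaves implicit.
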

\begin{proof}
By the similar description in the proof of Proposition \ref{lemm5.9999}, we  obtain (1) and (2).
\end{proof}

 \section{Intermediate series modules}
In this section, we shall realize the intermediate series $\mathfrak{g}$-modules by using  the weighting functor $\mathcal{W}$ introduced in \cite{N2} and $\Omega(\lambda,\beta)$.
From Proposition \ref{pro2.21}, we  rewrite the definition of $\mathfrak{g}$-module   $\Omega(\lambda,\beta)$ as follows. 
\begin{defi}
For $\lambda\in \C^*,\beta(H_0)\in\C[H_0],f(L_0,H_0)\in\C[L_0,H_0]$,
 the nontrivial  $\mathfrak{g}$-actions
  on $V=\mathbb{C}[L_0,H_0]\mathbf{1}_{\bar0}\oplus\mathbb{C}[L_0,H_0]\mathbf{1}_{\bar1}$ are defined by
\begin{eqnarray*}
 &&\label{2.37788}  
 L_m f(L_0,H_0)\mathbf{1}_{\bar0} = \lambda^m (L_0+m\beta(H_0)) f(L_0+m,H_0)\mathbf{1}_{\bar0},  
 \\&&\label{2.677788}L_m f(L_0,H_0)\mathbf{1}_{\bar1}= \lambda^m (L_0+m(\beta(H_0)+\frac{1}{2})) f(L_0+m,H_0)\mathbf{1}_{\bar1}, 
 \\&& \label{2.477788} H_m f(L_0,H_0)\mathbf{1}_{\bar0}= \lambda^m H_0f(L_0+m,H_0)\mathbf{1}_{\bar0},
 \\&&\label{2.777788}H_m f(L_0,H_0)\mathbf{1}_{\bar1}= \lambda^m H_0f(L_0+m,H_0)\mathbf{1}_{\bar1},
 \\&& \label{2.577788}G_p f(L_0,H_0)\mathbf{1}_{\bar0}= \lambda^{p-\frac{1}{2}} f(L_0+p,H_0)\mathbf{1}_{\bar1},
 \\&& \label{2.877888}G_p f(L_0,H_0)\mathbf{1}_{\bar1}= \lambda^{p+\frac{1}{2}} (L_0 + 2p\beta(H_0)) f(L_0+p,H_0)\mathbf{1}_{\bar0},
 \\&&\label{2.977788}  Q_p f(L_0,H_0)\mathbf{1}_{\bar1}= \lambda^{p+\frac{1}{2}} H_0f(L_0+p,H_0)\mathbf{1}_{\bar0}
 \end{eqnarray*}
for $m\in\Z,p\in\mathbb{Z}+\frac{1}{2}.$
 Then $V$ is a   $\mathfrak{g}$-module under the above actions,   and is  denoted by $\Omega^\prime(\lambda,
\beta)$.
\end{defi}
\begin{prop}[\cite{LL}]
   \label{intermediate} 
Let  $a,b,c\in\C$. { The intermidate series modules} $A(a,b,c)$ over $\R$   is a module
with a basis $\big\{v_n^+,v_q^{-}\mid n\in\Z,q\in\mathbb{Z}+\frac{1}{2}\big\}$ satisfying the nontrivial actions as follows
\begin{eqnarray*}
&&L_mv_n^+ =(c-n+ma)v_{n+m}^+,\\
&&L_mv_q^{-}=(c-q+m(a+\frac{1}{2}))v_{q+m}^{-},\\&&
 H_mv_n^+=b v_{n+m}^+,
 \ H_mv_q^{-}=b v_{q+m}^{-},   
  \\&& G_pv_q^{-}=(c-q+2p(a+\frac{1}{2}) )v_{p+q}^+,\\&&
 G_pv_n^+=v_{p+n}^-, \ Q_pv_q^-=b v_{p+q}^+, 
\end{eqnarray*}
where $m\in\Z,p\in\mathbb{Z}+\frac{1}{2}$.   Furthermore,  
  ${A(a,b,c)}$   is reducible if and only if  $c\in\mathbb{Z},a=-1,b=0$ or $c\in\mathbb{Z}+\frac{1}{2},a=-\frac{1}{2},b=0$.
  \end{prop}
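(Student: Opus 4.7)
The plan is to prove the proposition in three stages: verify the module axioms, exhibit explicit submodules for the two listed parameter triples, and then show that in every remaining case the module is irreducible.

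First, I would check that the displayed formulas define a $\mathfrak{g}$-action by evaluating each super-bracket in \eqref{2.1} on the basis $\{v_n^+,v_q^-\mid n\in\Z,\,q\in\Z+\frac{1}{2}\}$. For example, $[G_p,G_q]=2L_{p+q}$ applied to $v_n^+$ reduces to
\[(c-q-n+2p(a+\tfrac{1}{2}))+(c-p-n+2q(a+\tfrac{1}{2}))=2(c-n+(p+q)a),\]
which is an elementary identity; the remaining brackets are analogous and routine.

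Second, for the ``if'' direction I would exhibit a proper nonzero submodule in each listed case. When $c\in\Z$, $a=-1$, $b=0$, let $N_1=\mathrm{span}\{v_n^+\mid n\ne c\}\oplus\mathrm{span}\{v_q^-\}$; the only potentially problematic actions $L_mv_n^+=(c-n-m)v_{n+m}^+$ and $G_pv_q^-=(c-q-p)v_{p+q}^+$ have coefficient $0$ precisely when the target would be the excluded vector $v_c^+$, so $N_1$ is closed. When $c\in\Z+\frac{1}{2}$, $a=-\frac{1}{2}$, $b=0$, let $N_2=\C v_c^-$; each of $L_m,H_m,G_p,Q_p$ annihilates $v_c^-$ because one of the scalars $c-c$, $b$, or $2p(a+\frac{1}{2})$ appears as a factor.

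For the converse, I would first note that $L_0v_n^+=(c-n)v_n^+$ and $L_0v_q^-=(c-q)v_q^-$ have pairwise distinct eigenvalues, so every submodule is a direct sum of coordinate lines $\C v_n^+$ and $\C v_q^-$. Assuming $N$ is proper and nonzero, I split into two subcases. If some $v_{n_0}^+\in N$, then $G_pv_{n_0}^+=v_{p+n_0}^-\in N$ places the whole odd part in $N$; applying $G_{p'}v_q^-=(c-n'+2p'(a+1))v_{n'}^+$ with $q=n'-p'$ then yields every $v_{n'}^+$ when $a\ne -1$, while for $a=-1$ the coefficient $c-n'$ vanishes for some $n'\in\Z$ only when $c\in\Z$; the remaining actions $H_mv_n^+=bv_{n+m}^+$ and $Q_pv_q^-=bv_{p+q}^+$ then force $b=0$. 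If instead no $v_n^+$ lies in $N$ but some $v_{q_0}^-$ does, the vanishing of $G_pv_{q_0}^-$ for every $p\in\Z+\frac{1}{2}$ forces $c=q_0\in\Z+\frac{1}{2}$ and $a=-\frac{1}{2}$, $Q_pv_{q_0}^-=0$ forces $b=0$, and a further $G_p$-application rules out any $v_{q_1}^-\in N$ with $q_1\ne c$, giving $N=\C v_c^-$. The main obstacle will be organizing this case analysis cleanly, since the vanishing conditions on the coefficients couple $a$, $b$, $c$, and the choice of starting vector simultaneously.
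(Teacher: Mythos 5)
Your proposal is a correct and essentially complete argument, but note that the paper itself offers no proof of this proposition: it is quoted verbatim from the reference \cite{LL} (Lan--Liu), so there is no in-paper argument to compare against. What you supply is the standard self-contained proof, and every step checks out: the bracket verification is right (e.g.\ your $[G_p,G_q]$ identity on $v_n^+$ is correct after cancelling the $p+q$ terms); the submodule $N_1=\mathrm{span}\{v_n^+\mid n\neq c\}\oplus\mathrm{span}\{v_q^-\}$ is indeed closed when $c\in\Z$, $a=-1$, $b=0$ because the only actions landing on $v_c^+$ carry coefficient $c-n-m$, $c-q-p$ or $b$, all zero; and $\C v_c^-$ is annihilated by everything when $c\in\Z+\frac12$, $a=-\frac12$, $b=0$. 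The converse correctly exploits that the $L_0$-eigenvalues $c-n$ ($n\in\Z$) and $c-q$ ($q\in\Z+\frac12$) are pairwise distinct, so any submodule is spanned by basis vectors, and the coefficient $c-n'+2p'(a+1)$ appearing when you return from the odd part to the even part is the right quantity to analyze. One cosmetic slip: in the $N_2$ case you say the scalars ``appear as a factor,'' but the coefficients $c-q+m(a+\frac12)$ and $c-q+2p(a+\frac12)$ are sums, not products; they vanish because both summands vanish separately under $c=q$, $a=-\frac12$. This does not affect the validity of the argument.
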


 For $n\in\mathbb{Z}$, $\epsilon=\{0,\frac{1}{2}\}$, $\alpha=(\alpha_1,\alpha_2)\in\mathbb{C}\times\mathbb{C}$, let 
  $\mathcal{I}_{n+\alpha+\epsilon}$
    generated by $(L_0+n+\epsilon+\alpha_1)$ and $(H_0+\alpha_2)$
   be the maximal ideal of $U(\mathfrak{h})=\mathbb{C}[L_0,H_0]$.
 For a $\mathfrak{g}$-module $M$, set  $M_{n+\epsilon+\alpha}=M/\mathcal{I}_{n+\epsilon+\alpha} M$. 
Denote $$\mathcal{W}(M)=\bigoplus_{r\in\mathbb{Z}+\epsilon}M_{r+\alpha}.$$
It follows from Proposition $8$  in \cite{N2} that we  check the following
construction.
\begin{prop} Let $\epsilon=\{0,\frac{1}{2}\}$  and $\alpha=(\alpha_1,\alpha_2)\in\mathbb{C}\times\mathbb{C}$.
 The vector space $\mathcal{W}(M)$ becomes a $\mathfrak{g}$-module under the following nontrivial actions:
\begin{eqnarray*}
   && L_m\cdot(v+\mathcal{I}_{n+\epsilon+\alpha}M)= L_mv+\mathcal{I}_{m+n+\epsilon+\alpha}M,
   \\&&
 H_m\cdot(v+\mathcal{I}_{n+\epsilon+\alpha}M)= H_mv+\mathcal{I}_{m+n+\epsilon+\alpha}M,
 \\&&
G_p\cdot(v+\mathcal{I}_{n+\epsilon+\alpha}M)= G_pv+\mathcal{I}_{p+n+\frac{1}{2}+\alpha}M,
\\&&Q_p\cdot(v+\mathcal{I}_{n+\epsilon+\alpha}M)= Q_pv+\mathcal{I}_{p+n+\frac{1}{2}+\alpha}M,
\end{eqnarray*}
where $m\in\mathbb{Z},p,q\in\mathbb{Z}+\frac{1}{2}$.   
\end{prop}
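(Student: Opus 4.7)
The plan is to verify two things: first, that each of the assignments is well-defined on the quotient $M_{n+\epsilon+\alpha} = M/\mathcal{I}_{n+\epsilon+\alpha}M$ and lands in the correct graded piece of $\mathcal{W}(M)$; second, that the resulting operators on $\mathcal{W}(M)$ satisfy the super-bracket relations \eqref{2.1} of $\mathfrak{g}$. Both reductions are straightforward consequences of the commutation identities recorded in Lemma \ref{lem3.2} together with the fact that $M$ is already a $\mathfrak{g}$-module, so the bulk of the work is purely formal.

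For well-definedness, I would fix $m\in\mathbb{Z}$ (the case of $H_m$, and then $G_p,Q_p$ for $p\in\mathbb{Z}+\tfrac12$ is identical after re-indexing) and show that $L_m \cdot \mathcal{I}_{n+\epsilon+\alpha}M \subseteq \mathcal{I}_{m+n+\epsilon+\alpha}M$. Recall that $\mathcal{I}_{n+\epsilon+\alpha}$ is generated in $U(\mathfrak{h})=\mathbb{C}[L_0,H_0]$ by $L_0+n+\epsilon+\alpha_1$ and $H_0+\alpha_2$. By Lemma \ref{lem3.2}, $L_m(L_0+n+\epsilon+\alpha_1) = (L_0+m+n+\epsilon+\alpha_1)L_m$ and $L_m(H_0+\alpha_2)=(H_0+\alpha_2)L_m$, so for any $u\in M$, $L_m(L_0+n+\epsilon+\alpha_1)u \in \mathcal{I}_{m+n+\epsilon+\alpha}M$ and likewise for the $H_0$ generator. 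The same relations from Lemma \ref{lem3.2} handle $H_m$, $G_p$, $Q_p$ with the appropriate shift $p$ in place of $m$. Note that for $G_p$ and $Q_p$ the shift $p\in\mathbb{Z}+\tfrac12$ naturally carries $\epsilon=0$ to $\epsilon=\tfrac12$ and back, so the $\mathbb{Z}_2$-grading on $\mathcal{W}(M)$ is respected and odd operators change parity correctly.

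For the bracket relations, I would pick an arbitrary homogeneous class $[v]_{n+\epsilon+\alpha}:=v+\mathcal{I}_{n+\epsilon+\alpha}M$ and compute, for example,
\begin{align*}
[L_m,L_{m'}]\cdot [v]_{n+\epsilon+\alpha}
&= L_m\cdot(L_{m'}v+\mathcal{I}_{m'+n+\epsilon+\alpha}M) - L_{m'}\cdot(L_mv+\mathcal{I}_{m+n+\epsilon+\alpha}M)\\
&= [L_m,L_{m'}]v + \mathcal{I}_{m+m'+n+\epsilon+\alpha}M\\
&= (m-m')L_{m+m'}v + \mathcal{I}_{m+m'+n+\epsilon+\alpha}M\\
&= (m-m')L_{m+m'}\cdot [v]_{n+\epsilon+\alpha},
\end{align*}
and analogously for every other bracket in \eqref{2.1}. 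Because the action on each graded summand is literally inherited from the action of $\mathfrak{g}$ on $M$ (only the target summand shifts), each bracket identity in $\mathcal{W}(M)$ reduces to the corresponding bracket identity in $M$, which holds by hypothesis.

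The only point requiring any care—and really the only substantive step—is the well-definedness check, since the bracket relations are then automatic. I expect the main obstacle to be bookkeeping: writing out the shifts for odd generators $G_p, Q_p$ with $p\in\mathbb{Z}+\tfrac12$ and confirming that the parity of $[v]_{n+\epsilon+\alpha}$ is consistent with the super-bracket sign conventions. Once the eight cases of Lemma \ref{lem3.2} are invoked cleanly, the verification is routine, so I would present the $L_m$ case in full and remark that $H_m,G_p,Q_p$ are handled identically using the corresponding identities of Lemma \ref{lem3.2}.
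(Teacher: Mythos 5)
Your proposal is correct. The paper itself does not carry out this verification: its entire proof is the one-line appeal to Proposition 8 of \cite{8} (Nilsson's weighting functor), so what you have written is precisely the direct argument that the citation encapsulates, adapted to the two-variable Cartan piece $\mathfrak{h}=\mathbb{C}L_0\oplus\mathbb{C}H_0$. The two essential points are exactly the ones you isolate: well-definedness, which follows because Lemma \ref{lem3.2} lets you push $L_m$, $H_m$, $G_p$, $Q_p$ past any polynomial in $L_0,H_0$ at the cost of shifting $L_0$ by $m$ (resp.\ $p$) while fixing $H_0$, so that the generators $L_0+n+\epsilon+\alpha_1$ and $H_0+\alpha_2$ of $\mathcal{I}_{n+\epsilon+\alpha}$ are carried into $\mathcal{I}_{m+n+\epsilon+\alpha}$; and the bracket relations, which are inherited verbatim from $M$ because the action on each summand is the action on $M$ followed by the quotient map and the shifts compose additively. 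Your route buys a self-contained proof at the cost of some bookkeeping; the paper's buys brevity at the cost of asking the reader to check that Nilsson's one-variable statement extends to $\mathbb{C}[L_0,H_0]$ with the central $H_0$ unshifted. One small caution: the $\mathbb{Z}_2$-parity of $\mathcal{W}(M)$ is inherited from the parity decomposition $M=M_{\bar0}\oplus M_{\bar1}$, not from the index $\epsilon\in\{0,\tfrac12\}$; in the application these happen to align ($\mathbf{1}_{\bar0}$ sits over integer indices, $\mathbf{1}_{\bar1}$ over half-integer ones), but your phrasing slightly conflates the two gradings, and it is worth stating the parity of a class $v+\mathcal{I}_{n+\epsilon+\alpha}M$ as the parity of $v$ before invoking the super-sign conventions.
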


Now we present  the main result of this section.
\begin{theo}\label{th664}
Let $\lambda\in\C^*$, $\alpha=(\alpha_1,\alpha_2)\in\mathbb{C}\times\mathbb{C}$ and $\beta(H_0)\in\mathbb{C}[H_0]$. As $\R$-modules, we obtain $\mathcal{W}(\Omega^\prime(\lambda,\beta))\cong A\big(\beta(-\alpha_2)-1,-\alpha_2,-\alpha_1\big)$.
\end{theo}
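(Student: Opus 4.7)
The plan is to exhibit an explicit $\mathfrak{g}$-module isomorphism by producing an adapted basis of $\mathcal{W}(\Omega^\prime(\lambda,\beta))$ whose action formulas literally match those defining $A(a,b,c)$ with $a=\beta(-\alpha_2)-1$, $b=-\alpha_2$, $c=-\alpha_1$. First I would unpack the quotients: for each $n\in\mathbb{Z}$ the quotient $\Omega^\prime(\lambda,\beta)_{\bar 0}/\mathcal{I}_{n+\alpha}\,\Omega^\prime(\lambda,\beta)_{\bar 0}$ is one-dimensional and spanned by the class of $\mathbf{1}_{\bar 0}$, on which $L_0$ acts as the scalar $-(n+\alpha_1)$ and $H_0$ as $-\alpha_2$; the analogous statement holds for the odd quotients indexed by $q\in\mathbb{Z}+\frac{1}{2}$, spanned by the class of $\mathbf{1}_{\bar 1}$. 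In particular, as a $\mathbb{Z}_2$-graded vector space $\mathcal{W}(\Omega^\prime(\lambda,\beta))$ has the same one-dimensional weight-piece decomposition as $A(a,b,c)$.

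Next I would define the rescaled generators
\[
v_n^+ := \lambda^n\,\mathbf{1}_{\bar 0}+\mathcal{I}_{n+\alpha}\Omega^\prime(\lambda,\beta),\qquad
v_q^- := \lambda^{q-\frac{1}{2}}\,\mathbf{1}_{\bar 1}+\mathcal{I}_{q+\alpha}\Omega^\prime(\lambda,\beta),
\]
the exponents being chosen precisely to absorb the $\lambda^m$ and $\lambda^{p\pm\frac{1}{2}}$ prefactors appearing in the defining actions of $\Omega^\prime(\lambda,\beta)$. Each of the six nontrivial formulas of Proposition \ref{intermediate} would then be verified by direct substitution $L_0\mapsto -(r+\alpha_1)$, $H_0\mapsto -\alpha_2$ in the appropriate quotient. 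For instance, the $L_m$-action on $v_q^-$ produces $\lambda^{q-\frac{1}{2}}\lambda^m\bigl(L_0+m(\beta(H_0)+\tfrac{1}{2})\bigr)\mathbf{1}_{\bar 1}$, which after substitution and re-expression in the rescaled basis becomes exactly $(c-q+m(a+\frac{1}{2}))v_{q+m}^-$; similarly $G_p v_n^+$ equals $\lambda^n\lambda^{p-\frac{1}{2}}\mathbf{1}_{\bar 1}$ in the quotient at $n+p$, which simplifies to $v_{n+p}^-$ after the rescaling.

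The main obstacle is combinatorial rather than conceptual: the rescaling factors for the even and odd bases differ by a half-integer shift, and they must be chosen so that all seven identities (including the cross-terms from the odd generators $G_p$ and $Q_p$) hold simultaneously. This is a coupled consistency condition, but it is uniquely resolved by the $G_p$-action above; once this choice is fixed, the remaining verifications for $H_m$, $Q_p$ and $G_p$ acting on $v_q^-$ are routine identities, and the assignment $v_n^+\mapsto v_n^+$, $v_q^-\mapsto v_q^-$ is automatically a bijection because both modules have matching one-dimensional weight spaces at every weight, yielding the desired isomorphism of $\mathfrak{g}$-modules.
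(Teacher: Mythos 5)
Your proposal is correct and follows essentially the same route as the paper: the paper likewise takes $v_n^+=\mathbf{1}_{\bar0}+\mathcal{I}_{n+\alpha}\Omega^\prime(\lambda,\beta)$, $v_q^-=\mathbf{1}_{\bar1}+\mathcal{I}_{q+\alpha}\Omega^\prime(\lambda,\beta)$, computes the actions by substituting $L_0\mapsto-(r+\alpha_1)$, $H_0\mapsto-\alpha_2$, and then rescales by $\lambda^n$ and $\lambda^{q-\frac{1}{2}}$ to remove the $\lambda$-prefactors and match $A(\beta(-\alpha_2)-1,-\alpha_2,-\alpha_1)$. The only cosmetic difference is that you build the rescaling into the basis from the outset rather than performing it as a second step.
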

\begin{proof}
Let $\mathbf{1}_{\bar0}$ and $\mathbf{1}_{\bar1}$  be two generators   of $\Omega^\prime(\lambda,\beta)$.
For $n\in\mathbb{Z},q\in\mathbb{Z}+\frac{1}{2},\alpha=(\alpha_1,\alpha_2)\in\mathbb{C}\times\mathbb{C}$, setting  \begin{eqnarray*}
&&v_n^+=\mathbf{1}_{\bar0}+\mathcal{I}_{n+\alpha}
\Omega^\prime(\lambda,\beta)\in\Omega^\prime(\lambda,\beta)_{\bar0},
\\&&v_q^{-}=\mathbf{1}_{\bar1}+\mathcal{I}_{q+\alpha}
\Omega^\prime(\lambda,\beta)\in
\Omega^\prime(\lambda,\beta)_{\bar1},
\end{eqnarray*}
then we have the non-trivial   actions:
\begin{eqnarray*}
\nonumber L_mv_{n}^+&=&\lambda^m(L_0+m\beta(H_0))\mathbf{1}_{\bar0}+\mathcal{I}_{m+n+\alpha}
\Omega^\prime(\lambda,\beta)
\\&=&(-\alpha_1-n+m(\beta(-\alpha_2)-1))\lambda^mv_{n+m}^+,
\\ \nonumber L_mv_{q}^{-}&=&\lambda^m(L_0+m(\beta(H_0)+\frac{1}{2}))\mathbf{1}_{\bar1}+\mathcal{I}_{m+q+\alpha}
\Omega^\prime(\lambda,\beta)
\\&=&(-\alpha_1-q+m(\beta(-\alpha_2)-\frac{1}{2}))\lambda^mv_{q+m}^{-},
\\
H_mv_{n}^+&=&\lambda^mH_0\mathbf{1}_{\bar0}+\mathcal{I}_{m+n+\alpha}
\Omega^\prime(\lambda,\beta)
=-\alpha_2\lambda^mv_{n+m}^+,
\\H_mv_{q}^{-}&=&\lambda^mH_0\mathbf{1}_{\bar1}+\mathcal{I}_{m+q+\alpha}
\Omega^\prime(\lambda,\beta)
=-\alpha_2\lambda^mv_{q+m}^{-},
\\ 
G_pv_n^+&=&\lambda^{p-\frac{1}{2}}\mathbf{1}_{\bar0}+\mathcal{I}_{n+p+\alpha}
\Omega^\prime(\lambda,\beta)=\lambda^{p-\frac{1}{2}}v_{n+p}^{-},
\\ G_pv_{q}^{-}&=&\lambda^{p+\frac{1}{2}}(L_0+2p\beta(H_0))\mathbf{1}_{\bar1}+\mathcal{I}_{p+q+\alpha}
\Omega^\prime(\lambda,\beta)
\\&=&\label{12qwa418} (-\alpha_1-q+p(2\beta(-\alpha_2)-1))\lambda^{p+\frac{1}{2}}v_{p+q}^{+},
\\ Q_pv_{q}^{-}&=&\lambda^{p+\frac{1}{2}}H_0\mathbf{1}_{\bar1}+\mathcal{I}_{p+q+\alpha}
\Omega^\prime(\lambda,\beta)=-\alpha_2\lambda^{p+\frac{1}{2}}v_{p+q}^+
\end{eqnarray*}
for $m\in\Z,p\in\mathbb{Z}+\frac{1}{2}$.
Now   letting $\hat{v}_n^+=\lambda^nv_{n}^+,
\hat{v}_q^-=\lambda^{q-\frac{1}{2}}v_{q}^-$ in the above equations,  one can see that
\begin{eqnarray*}
&&L_m\hat{v}_{n}^+=(-\alpha_1-n+m(\beta(-\alpha_2)-1))\hat{v}_{n+m}^+,
\\&&
 L_m\hat{v}_{q}^{-}=(-\alpha_1-q+m(\beta(-\alpha_2)-\frac{1}{2}))\hat{v}_{q+m}^{-},
\\&&H_m\hat{v}_{n}^+=-\alpha_2\hat{v}_{n+m}^+,\
H_m\hat{v}_{q}^{-}=-\alpha_2\hat{v}_{q+m}^{-},
 \\&& G_p\hat{v}_{q}^-=(-\alpha_1-q+p(2\beta(-\alpha_2)-1))\hat{v}_{p+q}^{+},
 \\&&
 G_p\hat{v}_{n}^{+}= \hat{v}_{n+p}^{-}, \ Q_p\hat{v}_{q}^{-}= -\alpha_2\hat{v}_{n+p}^{+}.
\end{eqnarray*}
Therefore, we confirm that   $\mathcal{W}(\Omega^\prime(\lambda,\beta))\cong A\big(\beta(-\alpha_2)-1,-\alpha_2,-\alpha_1\big)$.
 
\end{proof}
\begin{rema}
  By the similar method, we also can get  the  weight $\mathfrak{g}$-modules $A(\beta(b)-1,-b,-\alpha)$ from non-weight modules  $\Phi(\lambda,\beta,b)$ by letting $(L_0+n+\epsilon+\alpha)$  
   be the maximal ideal of $\mathbb{C}[L_0]$.  Therefore,  using the weighting  functor,  the same weight $\mathfrak{g}$-module  may be obtained from different non-weight $\mathfrak{g}$-modules. In other words, during applications of   the   weighting  functor, it  may  reduce the types of modules.
\end{rema}

\section*{Authors' contributions}
All authors contributed equally to this work.

\section*{Data availability}
This manuscript has no associated data.

\section*{Conflict of interest }
The authors declare that they have no conflict of interest.

\section*{Acknowledgements}
This work was supported by the National Natural Science Foundation of China (Nos. 12171129, 11971350),    Fujian Alliance of Mathematics (No. 2023SXLMMS05).
Finally‌, we would like to thank ‌the referee‌ for their valuable comments and suggestions.

\small 
\bigskip

Ziqi Hong

\vspace{2pt}
 School of Science, Jimei University, Xiamen, Fujian 361021, China

\vspace{2pt}
 ziqihong527@163.com

\bigskip

Haibo Chen

\vspace{2pt}
 School of Science, Jimei University, Xiamen, Fujian 361021, China

\vspace{2pt}
hypo1025@jmu.edu.cn

\bigskip

Yucai Su

\vspace{2pt}
School of Science, Jimei University, Xiamen, Fujian 361021, China

\vspace{2pt}
yucaisu@jmu.edu.cn

\end{document}